\documentclass[11pt,a4paper,reqno]{amsart}
\usepackage{a4wide}
\usepackage{amsmath}
\usepackage{amsthm}
\usepackage{amsfonts}
\usepackage{amssymb}
\usepackage{amsbsy}
\usepackage{mathrsfs}
\hbadness=100000

\newtheorem{theorem}{Theorem}[section]
\newtheorem{lemma}[theorem]{Lemma}

\newtheorem{proposition}[theorem]{Proposition}
\newtheorem{claim}{Claim}

\theoremstyle{remark}\newtheorem{remark}[theorem]{\it \bf{Remark}\/}

\numberwithin{equation}{section}
\catcode`@=11
\def\section{\@startsection{section}{1}%
  \z@{1.5\linespacing\@plus\linespacing}{.5\linespacing}%
  {\normalfont\bfseries\large\centering}}
\catcode`@=12
%
\newcommand{\be}{\begin{equation}}
\newcommand{\ee}{\end{equation}}
\newcommand{\bea}{\begin{eqnarray}}
\newcommand{\eea}{\end{eqnarray}}
\newcommand{\bee}{\begin{eqnarray*}}
\newcommand{\eee}{\end{eqnarray*}}

\def\NN{\mathbb{N}}

\def\calA{{\mathcal A}}

\def\calD{{\mathcal D}}

\def\calT{{\mathcal T}}
\def\calN{{\mathcal N}}

\catcode`@=11
\def\supess{\mathop{\operator@font Sup\,ess}}
\catcode`@=12

\def\NN{\mathbb{N}}

\def\RR{\mathbb{R}}

\def\e{\varepsilon}

\def\R2+{\RR ^2_+}

\def\calA{{\cal A}}

\def\calD{{\cal D}}

\def\lim{\mathop{\rm lim}}

\def\supp{{\rm supp}~}
\def\sup{\mathop{\rm sup}}

\def\l{\lambda}

\def\log{{\rm log}}

\def\cal{\mathcal}

\def\pbt{\ti{P}_b}

\def\pbt{\widetilde \psi}
\def\vbt{\widetilde \varphi}

\newcommand{\p}{\partial}
\newcommand{\I}{\infty}
\newcommand{\al}{\alpha}
\newcommand{\ga}{\gamma}
\newcommand{\de}{\delta}
\newcommand{\la}{\lambda} \newcommand{\La}{\Lambda}
\newcommand{\si}{\sigma}
\newcommand{\fy}{\varphi}
\newcommand{\ze}{\zeta}
\newcommand{\om}{\omega}
\newcommand{\Om}{\Omega}
\newcommand{\calI}{\mathcal{I}}
\newcommand{\calL}{\mathcal{L}}

\newcommand{\frkQ}{\mathfrak{Q}}
\newcommand{\ti}{\widetilde}
\newcommand{\lec}{\,\lesssim\,}
\newcommand{\weak}{\operatorname{w-}}

\newcommand{\ip}[2]{( #1 , #2)}
\newcommand{\EQ}[1]{\begin{equation}\begin{split} #1 \end{split}\end{equation}}
\newcommand{\mat}[1]{\begin{pmatrix} #1 \end{pmatrix}}

\newcommand{\Lw}[2]{\{#1,#2\}}

\title[Threshold manifold for critical gKdV]{Codimension one threshold manifold for the critical gKdV equation}
\author[Y. Martel]{Yvan Martel}
\address{Ecole polytechnique, CMLS CNRS UMR7640, 91128 Palaiseau, France}
\email{yvan.martel@polytechnique.edu}
\author[F. Merle]{Frank Merle}
\address{Universit\'e de Cergy Pontoise and Institut des Hautes \'Etudes Scientifiques, AGM CNRS UMR8088, 95302 Cergy-Pontoise, France}
\email{merle@math.u-cergy.fr}
\author[K. Nakanishi]{Kenji Nakanishi}
\address{Department of Mathematics, Kyoto University, Kyoto 606-8502, Japan} 
\email{n-kenji@math.kyoto-u.ac.jp}
\author[P. Rapha\"el]{Pierre Rapha\"el}
\address{Universit\'e de Nice Sophia-Antipolis, Laboratoire J.A. Dieudonn\'e
 CNRS UMR7351, 06108 NICE Cedex 02, France}
\email{pierre.raphael@unice.fr}
\begin{document}

\begin{abstract}
We construct the ``threshold manifold'' near the soliton for the mass critical gKdV equation, completing results obtained in
 \cite{MMR1} and  \cite{MMR2}. In a neighborhood of the soliton, this $C^1$ manifold of codimension one separates solutions blowing up in finite time and solutions in the ``exit regime''. On the manifold, solutions are global in time and converge locally to a soliton. In particular, the soliton behavior is strongly unstable by blowup.
\end{abstract}

\maketitle

\section{Introduction}
\subsection{General setting}
We consider the mass critical generalized Korteweg--de Vries equation:
\begin{equation}\label{kdv}
{\rm (gKdV)}\quad  \left\{\begin{array}{ll}
 u_t + (u_{xx} + u^5)_x =0, \quad & (t,x)\in [0,T)\times\RR, \\
 u(0,x)= u_0(x), & x\in {\mathbb R}.
\end{array}
\right.
\end{equation}
The Cauchy problem is locally well posed in the energy space $H^1(\RR)$ from Kenig, Ponce and Vega \cite{KPV,KPV2}: given $u_0 \in H^1$, there exists a unique\footnote{in a certain sense, e.g., $u\in L^5_xL^{10}_t$ locally in time is sufficient for the uniqueness.} maximal solution $u(t)$ of \eqref{kdv} in $C([0,T), H^1)$ and 
\be
\label{blowucifoi}
T<+\infty \ \  \mbox{implies} \ \ \lim_{t\to T} \|u_x(t)\|_{L^2} = +\infty.
\ee
Moreover, $H^1$ solutions satisfy the conservation of mass and energy: $$M(u(t))=\int u^2(t)= M_0, \ \ E(u(t))= \frac 12 \int u_x^2(t) - \frac 16 \int u^6(t)= E_0.$$ 
Equation \eqref{kdv} satisfies the following symmetries : if $u(t,x)$ satisfies \eqref{kdv}, then, for all $(\l_0,x_0,t_0)\in (0,+\infty)\times\RR\times\RR$, 
$\pm\lambda_0^{\frac12}u(\lambda_0^3 (t-t_0),\lambda_0 (x -x_0))$
also satisfies \eqref{kdv}.

Recall that the traveling wave solutions of \eqref{kdv} have (up to the above symmetries) the following form  $$u(t,x)=Q(x-t)$$ where $Q$ is the ground state solitary wave $$Q(x) =   \left(\frac {3}{\cosh^{2}\left( 2 x\right)}\right)^{\frac14},\quad
Q''+Q^5=Q.$$ 
Recall also the sharp Gagliardo-Nirenberg inequality, \cite{W1983}:
 \be\label{gn}
\forall v\in H^1,\quad
\int \frac{v^6}{6} \leq \int \frac{v_x^2}{2} \left(\frac {\int v^2}{\int Q^2}\right)^2.
\ee
From this inequality and 
the conservation of mass and energy,   $H^1$ initial data with subcritical mass $\|u_0\|_{L^2}<\|Q\|_{L^2}$ generate bounded (in $H^1$) and thus global solutions.
 
The study of singularity formation (existence and behavior of blow up solutions) for $H^1$ initial data 
with mass slightly above the minimal mass
\begin{equation}
\label{utwosmall}
\|Q\|_{L^2}\leq\|u_0\|_{L^2}<\|Q\|_{L^2}+\alpha^* \quad \hbox{for}\quad \alpha^*\ll1, 
\end{equation} 
was initiated in \cite{MMjmpa,MMgafa,Mjams,MMannals, MMjams} and then continued in more recent works \cite{MMR1,MMR2,MMR3}. 

Now, we recall the main result from \cite{MMR1,MMR2}.
Define the two dimensional soliton manifold
$$
\mathcal Q = \left\{ \frac 1{\lambda_0^{\frac 12}} Q\left(\frac {.-x_0}{\lambda_0} \right); \quad
\lambda_0>0, \ x_0\in \RR\right\},
$$ 
and  the $L^2$  tube around $\mathcal Q$ of size $\alpha^*>0$,
\be\label{tube}
\mathcal T_{\alpha^*}=\left\{u\in H^1\ \mbox{such that} \ \inf_{v\in \mathcal Q}
\|u- v\|_{L^2} <\alpha^*\right\}.\ee

Consider the following set of initial data, for $\alpha_0>0$,
$$
\mathcal{A}_Q=
\left\{
u_0=Q+\e   \hbox{ with } \|\e\|_{H^1}<\alpha_0 \hbox{ and }
\int_{x>0} x^{10}\e^2(x)dx< 1
\right\}.$$

\begin{theorem}[Classification of the dynamics in $\mathcal{A}_Q$ \cite{MMR1}]
\label{th:3}
There exists $\alpha^*>0$ small so that for all 
$0<\alpha_0\ll \alpha^*\ll1$, 
for all $u_0 \in \mathcal{A}_Q$, the corresponding solution $u(t)$ of \eqref{kdv} satisfies one of the following
\medskip

\noindent{\em (Blow up)}  For all $t\in [0,T),$ $u(t)\in \mathcal{T}_{\alpha^*}$ and the solution blows up in finite time $T<+\infty$ with the universal blow up behavior
\be\label{blow}
\|u_x(t)\|_{L^2} = \frac {\|Q'\|_{L^2}+o(1)}{\ell_0 (T-t)} \quad 
\hbox{as $t\to T$,}
\ee
where $\ell_0=\ell_0(u_0)>0$ is a constant.
\medskip

\noindent{\em (Soliton)}  The solution is global, for all $t\geq 0, $ $u(t)\in \mathcal{T}_{\alpha^*}$, and  there exist $w_\infty\in{H^1}$, $\lambda_\infty>0$ and $x_\infty$ such that
\begin{equation}
  |  \lambda_{\infty}-1|+||w_\infty||_{H^1} \to 0 \text{ as $\alpha_0\to 0$,}
\end{equation}
 and
\begin{equation}\label{soliton}
\left\|u(t)- Q_{\lambda_\infty,x_\infty}(\cdot-\lambda_\infty^2 t - x_\infty) - e^{-t \partial_x^3} w_\infty \right\|_{H^1}
\rightarrow 0 \quad \hbox{as $t\to +\infty$}.
\end{equation} 

\medskip

\noindent{\rm (Exit)}  There exists $t^*\in (0,T)$ such that 
$u(t^*)\not \in \mathcal{T}_{\alpha^*}$.
\medskip

Moreover, the set of initial data satisfying {\rm (Blow up)}   and the set of initial data satisfying {\rm (Exit)} are open in $\mathcal{A}_Q$ for the $H^1$ norm.
\end{theorem}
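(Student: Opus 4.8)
The plan is to follow the modulation‑theory scheme of \cite{MMjmpa,MMgafa,Mjams,MMannals,MMjams}, sharpened in \cite{MMR1}: decompose $u(t)$ near $\mathcal Q$ into a modulated approximate blow‑up profile plus a small remainder, control the remainder by a mixed energy--virial Lyapunov functional, and reduce the dynamics on the tube to an essentially one‑dimensional ODE system for the modulation parameters whose phase portrait produces exactly the three alternatives. \textbf{Step 1 (Modulation).} On the maximal interval $I=\{t\in[0,T):u(t)\in\mathcal T_{\alpha^*}\}$ — a neighbourhood of $0$ since $u_0=Q+\e$ with $\|\e\|_{H^1}<\alpha_0\ll\alpha^*$ — I would write
\[
u(t,x)=\lambda(t)^{-1/2}\big(Q_{b(t)}+\eta(t)\big)\!\left(\frac{x-x(t)}{\lambda(t)}\right),
\]
with $Q_b$ the refined approximate profile of \cite{MMR1} (an $O(b)$ deformation of $Q$ carrying a slowly decaying tail that encodes the mass radiated to the right), and $\lambda>0$, $x\in\RR$, $b\in\RR$, $\eta$ fixed, through the implicit function theorem, by a finite set of orthogonality conditions on $\eta$ (to the generators of the soliton symmetries and to one extra direction pinning down $b$). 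In the rescaled time $s$, $ds/dt=\lambda^{-3}$, and in $y=(x-x(t))/\lambda$, the parameters then satisfy modulation equations of the form
\[
\Big|\frac{\lambda_s}{\lambda}+b\Big|+\big|b_s+b^2\big|+\Big|\frac{x_s}{\lambda^2}-1\Big|\ \lesssim\ \|\eta\|_{\mathrm{loc}}+|b|^{2+\kappa},\qquad \kappa>0,
\]
with $\|\cdot\|_{\mathrm{loc}}$ a weighted local Sobolev norm; the one‑sided decay condition $\int_{x>0}x^{10}\e^2<1$ defining $\mathcal A_Q$ would be used to make the right‑localized functionals of Step 2 meaningful, since it is propagated (with loss) by the Airy‑dominated part of the flow on $I$.

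\textbf{Step 2 (Lyapunov functional and trapped regime).} The core is a monotonicity formula. I would look for a functional $\mathcal F[\eta]$ — the conserved energy localized at scale one, corrected by a virial term $\int\psi(y)\,\eta\,\partial_y\eta$ with a finely tuned cutoff $\psi$ — that is coercive under the orthogonality conditions, $\mathcal F[\eta]\gtrsim\|\eta\|_X^2$ for an adapted norm $X$ (this uses the spectral structure of the linearized operator $\mathcal L=-\partial_y^2+1-5Q^4$ together with the Martel--Merle virial identity to control the slowly decaying resonant directions), and that decreases: after renormalization by a suitable power $\lambda^{-2\nu}$, $\nu>0$,
\[
\frac{d}{ds}\!\left\{\lambda^{-2\nu}\mathcal F[\eta]\right\}\ \le\ -\,\mu\,\lambda^{-2\nu}\,\|\eta\|_{\mathrm{loc}}^2\ +\ \lambda^{-2\nu}\,b\,\Gamma_b,\qquad \Gamma_b\to0,\ \mu>0,
\]
the dissipative term also controlling $\partial_y\eta$ in the soliton region. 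Combined with Step 1, a bootstrap on $I$ would then yield the \emph{trapped regime}: $\|\eta(s)\|_X\lesssim b(s)+\alpha_0$, $b$ stays in a fixed small strip $b>-\theta$, $|\lambda-1|$ stays small while $b$ does, and $b_s=-b^2(1+o(1))$ holds throughout $I$. This step is also where the refinement of \cite{MMR1} enters: the tail of $Q_b$ must be calibrated so that the true law of $b$, and hence the blow‑up speed, is captured at the right order.

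\textbf{Step 3 (Trichotomy).} With the trapped estimates the PDE on $I$ collapses to the scalar pair $b_s=-b^2(1+o(1))$, $(\log\lambda)_s=-b(1+o(1))$, so $b$ alone governs the outcome. If $b$ ever drops below $-\theta/2$, a separate virial monotonicity (valid for solutions that are not close to self‑similar) forces $u$ to leave $\mathcal T_{\alpha^*}$ within a bounded further rescaled time, hence at some finite physical time $t^*<T$: this is (Exit). Otherwise $b\ge-\theta/2$ on all of $[0,T)$, and the trapped estimates force either $\lambda(t)\to0$ — in which case $\|u_x(t)\|_{L^2}\to\infty$, so $T<\infty$, and integrating the reduced system gives $-\lambda_t\to\ell_0>0$, whence $\|u_x(t)\|_{L^2}=\lambda(t)^{-1}(\|Q'\|_{L^2}+o(1))=\frac{\|Q'\|_{L^2}+o(1)}{\ell_0(T-t)}$, i.e.\ (Blow up) — or $\lambda(t)\to\lambda_\infty>0$, in which case $T=+\infty$, $b\to0$, $\|\eta\|_X\to0$, and $u(t)-Q_{\lambda_\infty,x_\infty}(\cdot-\lambda_\infty^2 t-x_\infty)$ is small in $H^1$ and solves an equation with linear part $\partial_t+\partial_x^3$ plus the exponentially localized linearization at the soliton; a Kato‑smoothing/virial argument then decouples the radiation and sends it to some $e^{-t\partial_x^3}w_\infty$ with $\|w_\infty\|_{H^1}+|\lambda_\infty-1|\to0$ as $\alpha_0\to0$, i.e.\ (Soliton). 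Exactly one alternative holds, since the three correspond respectively to $u$ leaving $\mathcal T_{\alpha^*}$, to $u$ staying in $\mathcal T_{\alpha^*}$ with $T<\infty$, and to $u$ staying in $\mathcal T_{\alpha^*}$ with $T=+\infty$.

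\textbf{Step 4 (Openness, and the main difficulty).} For the last assertion: (Exit) is open because one may choose $t^*$ with $\mathrm{dist}_{L^2}(u(t^*),\mathcal Q)>\alpha^*$ \emph{strictly}, and the flow map is continuous on $C([0,t^*],H^1)$ with $[0,t^*]$ strictly inside the lifespan, so nearby data also exit. (Blow up) is open because any blow‑up solution in $\mathcal A_Q$ reaches, at some finite time $t_0$, a state obeying a \emph{strict} system of inequalities ($\lambda(t_0)$ small, $b(t_0)>\lambda(t_0)/C$, $\|\eta(t_0)\|_X<\lambda(t_0)/C$) that, by the deterministic trapped dynamics of Step 3, forces blow‑up in finite time; these conditions are open at time $t_0$, hence open in $u_0$ by $H^1$‑continuous dependence on $[0,t_0]$. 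Consequently the (Soliton) set is the relatively closed boundary between the two open sets — precisely the manifold this paper will describe. The hard part is Step 2: constructing $Q_b$ with the correct tail so that the radiated mass is captured at the right order in $b$, and then pinning down the localized energy--virial functional $\mathcal F$ and cutoff $\psi$ for which the monotonicity has a strictly negative leading term — this forces a delicate reconciliation of the non‑self‑adjointness of the linearized gKdV flow, the detailed spectrum of $\mathcal L$, and the one‑sided decay built into $\mathcal A_Q$, and it is also what makes the reduction in Step 3 genuinely one‑dimensional, i.e.\ the threshold genuinely codimension one.
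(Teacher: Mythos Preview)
Your outline is a reasonable high-level sketch of the strategy in \cite{MMR1}, but note that this paper does not itself prove Theorem~\ref{th:3}: it is quoted from \cite{MMR1}, with only the scattering refinement \eqref{soliton} being new here (see the remark immediately following the theorem). So there is no detailed proof of your Steps~1--4 in this paper to compare against; the paper only records, in Lemma~\ref{le:2}, the decomposition \eqref{defofeps}--\eqref{ortho1} and the key Lyapunov estimates \eqref{bN}--\eqref{estfondamentale} from \cite{MMR1}, which are broadly consistent with your Steps~1--2.

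For the scattering statement \eqref{soliton} --- the one part actually proved in this paper, in Step~3 of the proof of Proposition~\ref{le:2bis} --- the approach differs from your ``Kato-smoothing/virial'' suggestion. The paper writes $v=u-\frkQ$ with $\frkQ=Q_{(\mu,z)}$ and applies the Strichartz-type space-time estimates of Kenig--Ponce--Vega \cite{KPV,KPV2} for the free Airy group (the $L^5_xL^{10}_t$ bound, and the inhomogeneous $L^1_xL^2_t$ and $L^{5/4}_xL^{10/9}_t$ estimates) to the Duhamel formula for $v$. The crucial input is the sharp polynomial decay $\mathcal N_B[\eta(s)]+|\vec\Om(\eta)|^2\lesssim\delta(\al_0)(1+s)^{-7}$ obtained earlier from the one-sided weight assumption, which makes the source terms $F=(\frkQ+v)^5-\frkQ^5-v^5$ and $G=\mu^{-2}\vec\Om(\eta)\cdot\vec\p\frkQ$ globally small in the dual space-time norms, so that $e^{t\p_x^3}v(t)$ converges strongly in $H^1$. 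Kato smoothing and virial identities on their own give local-in-space decay rather than $H^1$ scattering of the radiation; the Strichartz route (following \cite{Tjde,KMapde}) is what actually closes \eqref{soliton} here.
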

 
\begin{remark}
Theorem \ref{th:3} is proved in \cite{MMR1}, except for statement \eqref{soliton}.
The asymptotic result \eqref{soliton}, more precise than the estimate obtained in \cite{MMR1}, since it states scattering for the residual part, is justified in the proof of Proposition \ref{le:2bis} using \cite{KPV} ; see also Tao \cite{Tjde}, Koch and Marzuola \cite{KMapde} for related results.

 It is also proved in \cite{MMR1}  that   the (Blow up) set  contains  the set $\{u_0\in \mathcal A_Q, \ E(u_0)\leq 0, \ u_0 \neq Q \hbox{ up to invariances}\}$. 
 
We expect all solutions in the (Exit) case to scatter at $+\infty$ (i.e. behavior as a linear solution). However, this is an open question.
\end{remark}
 
\begin{remark}[Exotic blow up rates]\label{th:4}
Let us stress the importance of the decay  assumption on the right   for the initial data in the definition of $\mathcal{A}_Q$ to obtain the classification result. 
Indeed in \cite{MMR3}, $H^1$ solutions arbitrarily close to $\mathcal Q$ blowing up in finite time with non generic blow up rates $1/(T-t)^\sigma$, for all $\sigma>\frac {11}{13}$, as well as global solutions growing up at infinity are constructed. These solutions do not enter the above classification, justifying that some decay assumption is necessary to classify the dynamics around $Q$. 

Recall that in \cite{MMR2}, the (unique up to invariances) minimal mass solution $S(t)$ of \eqref{kdv} was constructed.
Such solution blows up in finite time $T$ (let us take $T>0$) and satisfies $\|S(t)\|_{L^2}=\|Q\|_{L^2}$.
This solution has the stable blow  up rate $\frac 1{T-t}$ but it is also an exotic blow up solution, in the sense that its blow up behavior is unstable: for any $0<\epsilon<1$, $(1-\epsilon)S(0)$ gives rise to a global solution (subcritical mass criterion).  In particular, $S(t)\not\in \mathcal A_Q$.
\end{remark}

\begin{remark}
Solutions in the (Exit) case have a universal behavior at the exit time, related to the minimal mass blow up solution. See \cite{MMR2}.
\end{remark}
\subsection{Main result}
 The aim of the present paper is to prove that the (Soliton) set is a co-dimension one manifold in a neighborhood of $\mathcal Q$, thus separating the (Blow up) set from the (Exit) set.
Let
\begin{align*}
 & \mathcal A_0 = \left\{ \e_0 \in H^1; \ \|\e_0\|_{H^1} < \alpha_0,\
  \ \int_{x>0} x^{10} \e_0^2(x) dx < 1 \right\},
 \\& \calA_0^\perp = \{ \e_0 \in \calA_0; \ip{\e_0}{Q}=0\}
\end{align*}
equipped with the norm of $H^1\cap L^2(x_+^{10} dx)$ (we denote $x_+={\rm max}(0,x)$).
 
 \begin{theorem}[Existence of a threshold manifold]\label{th:manifold}
  There exist $\alpha_0>0$, $C>0$  and a $C^1$ function
 \begin{equation}\label{theq:2}
  \calA_0^{\perp}\ni\ga_0 \mapsto A(\ga_0)\in (-C\alpha_0^2,C\alpha_0^2),
 \end{equation}
 such that for all $\ga_0\in \mathcal{A}_0^\perp$, for all $a_0\in (-C\alpha_0,C\alpha_0)$, the solution of \eqref{kdv} corresponding to the initial data $u_0=(1+a_0)Q+\ga_0$ satisfies 
\begin{itemize}
 \item[-] {\rm (Soliton)}   if  $a_0=A(\ga_0)$;
 \item[-] {\rm (Blow up)}   if  $a_0>A(\ga_0)$;
 \item[-] {\rm (Exit)}   if  $a_0<A(\ga_0)$.
\end{itemize}
In particular, there exist a neighborhood $\mathcal O$ of $\mathcal Q$ in $H^1\cap L^2(x_+^{10} dy)$ and a codimension one $C^1$ manifold $\mathcal M \subset \mathcal O$ containing $\mathcal Q$,  such that
 for all $u_0\in \mathcal O$,  the corresponding solution of \eqref{kdv} is in the (Soliton) regime if and only if $u_0\in \mathcal M$.
 \end{theorem}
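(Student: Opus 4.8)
\emph{Reduction to one dimension.} The plan is to use Theorem~\ref{th:3} to reduce the construction of $\mathcal M$ to a one–dimensional problem along the ``mass/scaling'' direction $a_0$, to establish strict monotonicity of the dynamical regime in $a_0$, and then to promote the resulting threshold to a $C^1$ function. Fix $\gamma_0\in\calA_0^{\perp}$. For $\alpha_0$ small and $a_0\in I:=(-C\alpha_0,C\alpha_0)$ the datum $u_0(a_0):=(1+a_0)Q+\gamma_0$ lies in $\calA_Q$, so by Theorem~\ref{th:3} it is in exactly one of (Blow up), (Soliton), (Exit); let $\mathcal B(\gamma_0),\mathcal S(\gamma_0),\mathcal E(\gamma_0)\subset I$ be the corresponding sets of $a_0$, the first and third open in $I$. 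It suffices to find a single $A(\gamma_0)\in I$, depending on $\gamma_0$ in a $C^1$ way, with $\mathcal E(\gamma_0)=(-C\alpha_0,A(\gamma_0))$ and $\mathcal B(\gamma_0)=(A(\gamma_0),C\alpha_0)$; the manifold is then recovered from the graph of $A$ by the scaling–translation symmetries.

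\emph{Monotonicity in $a_0$.} The structural point is that $\partial_{a_0}u_0(a_0)=Q$ lies along the unique unstable direction of the flow near the soliton. While the solution stays in $\calT_{\alpha^*}$ it carries the modulated decomposition of \cite{MMR1} with parameters $(\lambda(t),x(t),b(t),\e(t))$, and the sharp analysis of \cite{MMR1,MMR2} attaches to the datum a scalar quantity $J(\gamma_0,a_0)$, a renormalized limit of $b$, whose sign decides the regime: $J>0\Leftrightarrow$ (Blow up), $J=0\Leftrightarrow$ (Soliton), $J<0\Leftrightarrow$ (Exit). Since $E(Q)=0$ and $Q''+Q^5=Q$ one computes $\frac{d}{da_0}E\big(u_0(a_0)\big)=-\|Q\|_{L^2}^2+O(\alpha_0)<0$, so increasing $a_0$ strictly lowers the conserved energy, which — via the monotonicity formulas of \cite{MMR1,MMR2} and the continuous dependence of the flow on the data in $H^1\cap L^2(x_+^{10}dx)$, cf.\ \cite{KPV} — forces $a_0\mapsto J(\gamma_0,a_0)$ to be strictly increasing. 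Moreover $J$ changes sign on $I$: for $a_0$ a fixed positive fraction of $C\alpha_0$ one has $E(u_0(a_0))<0$ and $u_0(a_0)\notin\calQ$, hence (Blow up) by the remark after Theorem~\ref{th:3} and $J>0$; for $a_0$ near the left end of $I$ the mass $\|u_0(a_0)\|_{L^2}^2=(1+a_0)^2\|Q\|_{L^2}^2+\|\gamma_0\|_{L^2}^2$ is subcritical by a fixed amount, so the solution is global and cannot converge to a soliton, hence (Exit) and $J<0$. Therefore there is a unique $A(\gamma_0)\in I$ with $J(\gamma_0,A(\gamma_0))=0$, and the sets $\mathcal E(\gamma_0),\mathcal S(\gamma_0),\mathcal B(\gamma_0)$ have the required form with $\mathcal S(\gamma_0)=\{A(\gamma_0)\}$.

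\emph{$C^1$ regularity, and the main difficulty.} It remains to show that $\gamma_0\mapsto A(\gamma_0)$ is $C^1$, which by the implicit function theorem follows once $\Phi(\gamma_0,a_0):=J(\gamma_0,a_0)$ is seen to be $C^1$ in $(\gamma_0,a_0)$ near the graph of $A$, with $\partial_{a_0}\Phi\ne0$ — the latter being the strict monotonicity above in differential form. For $|a_0-A(\gamma_0)|$ small the solution stays in $\calT_{\alpha^*}$ for a long renormalized time $s$ and obeys the bootstrap estimates of \cite{MMR1}; on any finite interval of $s$ the modulation parameters depend smoothly on $(\gamma_0,a_0)$ (the flow map of \eqref{kdv} being smooth on $H^1$-bounded time intervals, the parameters then cut out by orthogonality conditions), and the sharp exponential–type a priori control of \cite{MMR1,MMR2} upgrades this to uniform-in-$s$ bounds on the derivatives in the data, so that the limit defining $J$ is attained in $C^1$. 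This last point is the crux: one must differentiate, uniformly in time, a flow that otherwise blows up or leaves the tube, and extract from the modulation analysis an asymptotic invariant that is simultaneously well defined and transverse in $a_0$; the bare topological fact that $\mathcal S$ separates the open sets $\mathcal B$ and $\mathcal E$ yields no regularity by itself. (Alternatively, $\mathcal M$ may be constructed directly as a local center–stable manifold through a fixed–point argument in a space of trajectories, the dependence of the fixed point on the stable datum $\gamma_0$ then defining $A$.)

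\emph{The manifold.} The graph $\mathcal M_0:=\{(1+A(\gamma_0))Q+\gamma_0:\ \gamma_0\in\calA_0^{\perp}\}$ of the $C^1$ map $A$ over $\calA_0^{\perp}$ is a $C^1$ codimension–one submanifold of $H^1\cap L^2(x_+^{10}dx)$ near $Q$, and $Q\in\mathcal M_0$ since $u_0=Q$ is the soliton, i.e.\ $A(0)=0$. A general $u_0$ in a small neighborhood $\mathcal O$ of $\calQ$ can be written, after applying a scaling–translation symmetry, as $(1+a_0)Q+\gamma_0$ with $\ip{\gamma_0}{Q}=0$ and $\gamma_0$ small (standard modulation); since the three regimes are invariant under these symmetries, transporting $\mathcal M_0$ by them yields a $C^1$ codimension–one manifold $\mathcal M\supset\calQ$ in $\mathcal O$ for which $u_0\in\mathcal O$ is in the (Soliton) regime if and only if $u_0\in\mathcal M$.
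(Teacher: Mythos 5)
There is a genuine gap, and it is exactly at the heart of the theorem: you do not construct the object your argument relies on, and the properties you assert for it are the very things that need to be proved.

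Your plan is to introduce a scalar functional $J(\gamma_0,a_0)$, described as ``a renormalized limit of $b$,'' whose sign classifies the regime, and then apply the implicit function theorem. But \cite{MMR1,MMR2} do not supply such a $J$: in the (Blow up) case $b(t)\to 0^+$ but $b/\lambda$ tends to a positive constant, in the (Exit) case $b$ crosses a negative threshold at the exit time, and in the (Soliton) case $b(t)\to 0$; there is no single normalization that yields a finite, well-defined scalar functional across all three regimes, let alone one that is $C^1$ in the data. Declaring $J$ to exist with these properties is assuming the conclusion. Worse, the claimed strict monotonicity ``$E$ decreases in $a_0$, hence $J$ increases'' is not a valid implication. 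Negative energy does force (Blow up), and subcritical mass forces (Exit), but for $E\ge 0$ and mass $\ge\|Q\|_{L^2}$ the relationship between the sign of the energy and the eventual regime is not monotone; the whole difficulty is to rule out, for a fixed $\gamma_0$, that the regime oscillates in $a_0$. Your argument for uniqueness of the crossing therefore has no content, and the fact that you need uniqueness to define $A$ means the entire chain breaks at this point. You acknowledge that ``the bare topological fact\dots yields no regularity by itself'' and that one must ``differentiate, uniformly in time, a flow that otherwise blows up,'' but you then stop; this is not a side remark, it is the theorem.

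The paper's actual route is different and avoids defining any asymptotic scalar invariant. Existence of at least one (Soliton) value of $a_0$ comes, as you say, from the mass/energy endpoints plus openness of (Blow up) and (Exit) (Section~3.1). The substance is then a quantitative uniqueness and Lipschitz estimate (Proposition~\ref{pr:s4}): for two (Soliton) solutions, the difference $\ti\eta$ satisfies \eqref{eqe}, and an energy--Virial functional adapted from \cite{MMR1,MMR2}, combined with the decay of $\frac{d}{ds}\ip{\ti\eta}{Q}$ integrated from $s=+\infty$, gives $|\ip{Q}{\ti\eta(0)}|\lesssim\delta(\alpha_0)\|P_\perp\ti\eta(0)\|$, i.e. $|a_1-a_2|\lesssim\delta(\alpha_0)\|\gamma_1-\gamma_2\|$. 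This simultaneously gives uniqueness, the ordering (Exit)/(Soliton)/(Blow up) in $a_0$ by connectedness, and the Lipschitz bound. $C^1$ regularity is then obtained not by an implicit function theorem but by passing to weak limits of difference quotients $\ti\eta_n=(\eta_n-\eta)/h_n$, showing the limit solves a linearized equation \eqref{eqeta'}, and proving a rigidity statement for that linear equation (Lemma~\ref{le:ddiff}) via the same Virial machinery. To repair your argument you would have to either construct and estimate $J$ genuinely --- which would amount to redoing the paper's analysis --- or replace the $J$/IFT scheme by a direct difference estimate of the type the paper proves.
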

 
 \noindent{\bf Comments on the result}
\smallskip 
 
 \noindent\emph{1. Construction of the manifold.}
 The second statement in Theorem \ref{th:manifold} is a consequence of the existence of the function $A$. Indeed, for $\lambda_0>0$, $x_0\in \RR$,   define
  $$
  \mathcal M_{\lambda_0,x_0} = \left\{\frac{1}{\lambda_0^{\frac 12}}\left((1+A(\ga_0))Q+\ga_0\right)\left(\frac{.-x_0}{\lambda_0}\right) \, ; \ \ga_0\in \mathcal A_0^\perp\right\}.
$$
Then, the manifold $\mathcal M$ is defined by 
$$
  \mathcal M = \bigcup_{\lambda_0>0,x_0\in \RR} \mathcal M_{\lambda_0,x_0}.
  $$

\smallskip

\noindent\emph{2. Instability of the (Soliton) case.} An immediate corollary of Theorem \ref{th:manifold} is the strong instability of the (Soliton) case. For other works on instability of soliton behavior by blow up, we refer the reader to \cite{PS}, \cite{BC}, \cite{KM1}, \cite{KM2} and \cite{DR}.

\smallskip

\noindent\emph{3. Regularity of the manifold.} In this paper, we prove $C^{1}$ regularity of the function $A$. Our technique should extend to higher order regularity. Indeed, we believe that   the manifold is $C^p$ for any $p$, in a stronger topology than the one of the space $\mathcal A_0^\perp$.
 
Note also that some weight condition on the initial data such as in the definition of the space $\mathcal A_0^ \perp$ is necessary to obtain a threshold behavior separating the blow up region from the exit region. 
Indeed, let $S(t)$ be the minimal mass solution introduced in Remark \ref{th:4} and \cite{MMR2}.
Taking $u_0=(1-\epsilon) S(0)$, for any $0<\epsilon\ll 1$, the solution is global and in the (Exit) regime. For such initial data, there is no transition  between (Blowup) and (Exit).
 
\smallskip
 
\noindent\emph{4. Previous threshold manifold constructions for nonlinear dispersive PDE.} 
 Bates and Jones \cite{BJ}, constructed invariant manifolds in an abstract setting for nonlinear PDE, by the energy argument, applying it to the nonlinear Klein-Gordon equation.
  Krieger and Schlag \cite{KS1} constructed a center-stable manifold for the 1D super-critical nonlinear Schr\"odinger equation around unstable solitons in a specific topology, by the scattering argument for the residual part. 
Similarly Schlag \cite{Sc1} constructed a center-stable manifold around solitons for the $\dot H^{\frac 12}$-critical 3D nonlinear Schr\"odinger equation in a topology stronger that $\dot H^{\frac 12}$. This result was then improved by Beceanu \cite{Be1,Be2} who constructed the manifold in the $\dot H^{\frac 12}$ topology.
 Nakanishi and Schlag \cite{NS, NS1} considered the case of the nonlinear Klein-Gordon equation in 3D, and classified the dynamics under an energy constraint into several regimes, where the center-stable and center-unstable manifolds are the thresholds between scattering and blow-up. 
 See   \cite{NS2} for similar results for the cubic nonlinear Schr\"odinger in 3D. Krieger, Nakanishi and Schlag \cite{KNS1,KNS2}   considered the case of the energy critical nonlinear wave equation.

In all cases cited above, solitons are exponentially unstable, unlike for the $L^2$-critical case, which is degenerate.
Related results for the $L^2$ critical nonlinear Schr\"odinger equation are due to Bourgain and Wang \cite{BW}, Krieger and Schlag \cite{KS2} and Merle, Raphael and Szeftel \cite{MRS},
but  no construction of a {\it threshold} manifold has been achieved  in that case. 
Theorem~\ref{th:manifold} thus completes the first classification of possible behavior for $t\geq 0$, started in \cite{MMR1}, in the case of a nonlinear (not exponential) instability.
 
\smallskip

\noindent\emph{5. Classification for all time $t\in \RR$.}
A related further question is the classification for all time. Namely, is it possible to construct solutions with any of the three behavior as $t\to -\infty$ and $t\to +\infty$ (in the topology $H^1\cap L^2(|x|^{10}dx)$) ?
For exponential instabilities, it has been shown that all possibilities exist, see in particular the ``nine-set results'' in  \cite{NS} and \cite{KNS2}.
Such question is clearly related to the symmetry of the manifold by the transformation $x\mapsto -x$ since the (gKdV) equation is invariant under the transformation $(t,x)\mapsto (-t,-x)$. However, for the critical (gKdV) equation, such question seems really delicate since the ODE on $\lambda(t)$ characterizing the asymptotic behavior of the solution decomposed as
$$
u(t,x)=\frac{1}{\lambda^{\frac 12}(t)}Q\left(\frac{x-x(t)}{\lambda(t)}\right)+ \hbox{residual term},
$$
is $\dot \lambda =0$ i.e. $\l(t) = \ell_0$. At the main order, the behavior at $t\to+\infty$ depends on the sign of $\ell_0$.
In particular,  the change of behavior between $t\to +\infty$ and $t\to -\infty$ should come from the residual part. For exponential instabilities, all possible behavior can be seen at the level of the ODE, dominated by the linearized unstable mode which is absent in the $L^2$ critical case (see \cite{NS} and \cite{KNS2}).

The minimal mass solution $S(t)$ mentioned in Remark \ref{th:4} blows up in positive time but is global in negative time (more precisely, it is in the (Exit) regime for negative time). However, it does not belong to the space $H^1\cap L^2(|x|^{10}dx)$. We do not know whether there is any such solution in this class. 

\subsection{Notation}
For $\lambda_0>0$ and $x_0\in \RR$, we denote 
$$
f_{(\l_0,x_0)  }(x)=\frac{1}{\lambda_0^{\frac 12}}f\left(\frac{x-x_0}{\lambda_0}\right).
$$
Let
 $$\Lambda f(x)=\frac12f(x)+xf'(x) = -\p_{\la_0=1}f_{(\la_0,0)}(x).$$

We denote the $L^2$ scalar product by: $$\ip{f}{g}=\int_{\RR}f(x)g(x)dx.$$ 

Denote by $L$ the linearized operator close to $Q$
\be
\label{deflplus}
Lf=-f''+f-5Q^4f.
\ee 

For a given generic  small constant $0<\alpha^*\ll1 $,  $\delta(\alpha^*)$ denotes a generic small constant with $$\delta(\alpha^*)\to 0\ \ \mbox{as}\ \ \alpha^*\to 0.$$ 
Throughout the paper, the smallness of $\al_0$ is dominating the other small or large parameters, such as $\si$ and $B$ used for the exponential rate on the left. In other words, the parameter $\al_0$ should be chosen in the end after fixing the other parameters, such that all the smallness requirements depending on the other parameters are fulfilled. 
Under this convention, the dependence of $\delta(\al_0)$ on the other parameters is often ignored. 

The variables $(s,y)$ denote the rescaled time and space, where the soliton is renormalized to fixed size and position, while $(t,x)$ denote the original space-time. 

The weighted $L^p$ norm with an exponential weight on the left and a polynomial weight on the right is denoted by 
\EQ{
 \|f\|_{L^p\Lw{\si}{k}} := \|f(x)w(\si,k,x)\|_{L^p_x(\RR)},
 \quad w(\si,k,x):=\begin{cases} e^{\si x} &(x \le 0),\\ (1+x)^k &(x\ge 0), \end{cases}}
for any $\si,k\in\RR$, and similarly the weighted Sobolev norm 
\EQ{
 \|f\|_{H^1\Lw{\si}{k}}^2 := \|f\|_{L^2\Lw{\si}{k}}^2+\|f_x\|_{L^2\Lw{\si}{k}}^2.}
The following weighted $L^2$ norm is frequently used to dispose of localized terms 
\EQ{
 \|f\|_{L^2_{loc}}^2 := \int_\RR |f(x)|^2e^{-\frac{|x|}{10}}dx.}


\subsection{Sketch of the proof of Theorem \ref{th:manifold}}
The general strategy is to construct  directly and explicitly  the  manifold, 
adapting the robust energy-Virial functional introduced in \cite{MMR1} and more standard energy type arguments (\cite{Kato}).

\medskip

In this paper, we only consider solutions of \eqref{kdv} in the (Soliton) regime of Theorem \ref{th:3}, i.e. global and bounded solutions which remain close to the soliton for all $t\geq 0$.

\medskip

{\it -- Decomposition and refined decay estimates.}
Throughout the proof, we decompose  such solution in the following way
\begin{equation}\label{decSk}
	u(t,x)=\mu^{-\frac 12}(t) (Q+\eta)\left(\frac{x-z(t)}{\mu(t)}\right),
\end{equation}
where $\eta(0)=u(0)-Q$, $\mu(0)=1$, $z(0)=0$ (no modulation of the initial configuration).
Setting $s=\int_0^t \mu(t')^{-3} dt',$ $y=\mu(t) x + z(t)$, the function $\eta(s,y)$ satisfies
$$
\partial_s \eta = \partial_y (L \eta - R(\eta)) + {\rm Mod}(\eta),
$$
where $R(\eta) = (Q+\eta)^5 - Q^5-5 Q^4 \eta$ contains nonlinear terms in $\eta$ and ${\rm Mod}(\eta)$ contains linear and quadratic terms in $\eta$ related to the choice of the modulation parameters $(\mu,z)$ and thus to the orthogonality conditions imposed on $\eta$, for all $s\geq 0$,
\begin{equation}\label{orthoSk}
	\partial_s (\eta,\Lambda Q) + (\eta,\Lambda Q) =  \partial_s (\eta,\partial_y \Lambda Q)+(\eta,\partial_y \Lambda Q) =0,
\end{equation}
or equivalently
$$
(\eta(s),\Lambda Q) = e^{-s} (\eta(0),\Lambda Q),\quad 
(\eta(s),y\Lambda Q) = e^{-s} (\eta(0),y\Lambda Q).
$$
(See Section \ref{ss:decop} for explanation on this specific choice of orthogonality relations.)
In Section 2, we first improve   estimates on solutions in the (Soliton) regime from \cite{MMR1}, proving in particular, for all $s\geq 0$, the decay estimates
$$
\|\eta(s)\|_{loc}^2+ \|{\rm Mod}(\eta(s))\|_{L^2_{loc}}^2 \leq \delta(\alpha_0) (1+s)^{-7},
$$
as well as estimates on higher order weighted  Sobolev norms of $\eta$.
Such estimates are consequences of results in \cite{MMR1}, combined with energy techniques from  \cite{Kato},
and rely on the initial weighted bound $\int_{y>0} y^{10} \eta^2(0) dy\lesssim 1$.

\medskip

{\it -- Construction of a Lipschitz graph.}
In Section 3, the construction of the Lipschitz map $A$ whose graph is the local manifold $\mathcal M$ follows from two main arguments. 

\smallskip

(a)  Existence. Given $\gamma \in \mathcal A_0^\perp$, the existence of $a_0=a_0(\gamma)$ with $|a_0|\lesssim \|\gamma\|_{H^1}^2$ so that the solution of \eqref{kdv} with initial data $u(0)=(1+a_0)Q+ \gamma$ is in the (Soliton) regime follows from the trichotomy of Theorem \ref{th:3}. Indeed, it is easy to see that for some
$-\alpha_0\lesssim a_0 \ll - \|\gamma\|_{H^1}^2$, $\|u(0)\|_{L^2}<\|Q\|_{L^2}$ and then the solution is in the (Exit) regime. Moreover, for some
$ \|\gamma\|_{H^1}^2 \ll a_0 \lesssim \alpha_0$, $E(u(0))<0$ and then the solution is in the (Blowup) regime.
Thus, given $\gamma \in \mathcal A_0^\perp$, there exists at least a value of $a_0$ with $|a_0|\lesssim \|\gamma\|_{H^1}^2$ so that $u(0)=(1+a_0)Q+ \gamma$ is in the (Soliton) regime. See details in Section 3.1.

\smallskip

(b) Uniqueness and Lipschitz regularity. Let $u_1$ and $u_2$ be two solutions of \eqref{kdv} in the (Soliton) regime, and $\eta_1$, $\eta_2$ defined accordingly from \eqref{decSk}. In particular, 
$\eta_j(0) = a_j Q + \gamma_j$, where $\gamma_j\in \mathcal A_0^\perp$. Let $\tilde \eta= \eta_1-\eta_2$.
Then, $\tilde \eta$ satisfies
\begin{equation}\label{TetaSk}
	\partial_s \tilde \eta = \partial_y (L\tilde \eta - \widetilde R(\eta_1,\eta_2) \tilde \eta) + \widetilde {\rm Mod}(\eta_1,\eta_2) \cdot \tilde \eta,
\end{equation}
where
$ \widetilde R(\eta_1,\eta_2) = \frac {R(\eta_1)-R(\eta_2)}{\eta_1-\eta_2}$ and $\widetilde {\rm Mod}(\eta_1,\eta_2) \cdot \tilde \eta$ is a linear term in $\tilde \eta$ related to modulation (in particular, $\tilde \eta$ satisfies \eqref{orthoSk}). Note that from the previous estimates
$$\|\tilde \eta(s)\|_{loc}^2\lesssim \delta(\alpha_0) (1+s)^{-7}.$$

The main estimates of the paper, stated in Proposition \ref{pr:s4}, say that {\bf  any} solution  $\tilde \eta$ of \eqref{TetaSk} such that
\begin{equation}\label{liminfSk}
	\liminf_{s\to \infty} (\tilde \eta(s),Q)=0,
\end{equation}
satisfies
\begin{equation}\label{unSk}
\forall s\geq 0,\quad |(\tilde \eta(s),Q)|\lesssim \delta(\alpha_0) (1+s)^{-\frac 52} \|\gamma_1-\gamma_2\|_{H^1},
\end{equation}
\begin{equation}\label{deuxSk}
\sup_{s\geq 0} \|\tilde \eta\|_{loc}^2 + \int_0^{+\infty} \|\tilde \eta(s)\|_{loc}^2 ds \lesssim \|\gamma_1-\gamma_2\|_{H^1}^2.
\end{equation}
The proof of \eqref{deuxSk} is similar to the proof of the main energy estimates in \cite{MMR1} and \cite{MMR2}, using a mixed energy-Virial functional. The proof of \eqref{unSk} is based on  special properties of the $Q$ direction in equation \eqref{TetaSk} (related to the fact that $LQ'=0$, $(Q,\Lambda Q)=(Q,Q')=0$), and the previous estimates, which give
\begin{equation}\label{asSk}
\left|\frac d{ds} (\tilde \eta,Q)(s) \right|\lesssim \delta(\alpha_0) (1+s)^{-\frac 72} \|\tilde \eta\|_{L^2_{loc}}. 
\end{equation}
Integrating \eqref{asSk} on $[s,+\infty)$ using \eqref{liminfSk} and \eqref{deuxSk}, we get \eqref{unSk}.
In fact, the proofs of \eqref{unSk} and \eqref{deuxSk} have to be combined since the Virial relation used for the proof of \eqref{deuxSk} requires some control on $|(\tilde \eta,Q)|$ in addition to the orthogonality relations \eqref{orthoSk}.

In particular, we deduce from \eqref{unSk} at $s=0$ that
$$
|a_1-a_2| \lesssim \delta(\alpha_0) \|\gamma_1-\gamma_2\|_{H^1}.
$$
Given $\gamma \in \mathcal A_0^\perp$, this proves uniqueness of the value of $A(\gamma)$ so that the initial condition $u(0)=A(\gamma)Q+\gamma$ implies the (Soliton) regime, as well as the Lipschitz regularity of the map $A$. See a precise statement in Proposition \ref{th:s4} and
a detailed proof in Section 3.2.

\medskip

{\it -- $C^1$ regularity.}
Let 
$\eta$ and $\eta_n$ correspond to the decompositions of  $u$ and $u_n$, solutions of \eqref{kdv} in the (Soliton) regime with initial data $u(0)=(1+A(\gamma)) Q+\gamma$ and $u_n(0)=(1+A(\gamma_n)) Q+\gamma_n$, where $\gamma_n \to \gamma$ in $H^1$ as $n\to +\infty$.
First,  after extracting a subsequence, we obtain, as $ n\to +\infty,$
 $$\frac{A(\gamma_n)-A(\gamma)}{\|\gamma_n-\gamma\|_{H^1}}\to a_0', \quad \frac{\gamma_n-\gamma}{\|\gamma_n-\gamma\|_{H^1}}\rightharpoonup \gamma', 
 \quad \frac{\eta_n-\eta}{\|\gamma_n-\gamma\|_{H^1}} \rightharpoonup \eta',$$
where $\eta'$ is the (unique) weak equation of 
\begin{equation}\label{etapSk}
\partial_s  \eta' = \partial_y (L \eta' - R'(\eta) \eta') +   {\rm Mod}'(\eta)\cdot\eta',
\quad \eta'(0)=a_0' Q+\gamma'.
\end{equation}
Moreover, by \eqref{unSk}, 
\begin{equation}\label{ezeroSk}
\lim_{s\to \infty} (\eta'(s),Q)=0.
\end{equation}

Second, using identical arguments as in the previous step, 
it follows that given $\gamma$ and $\gamma'$,  there exists a unique  $a_0'$ such that the  solution $\eta'$ of \eqref{etapSk} satisfies \eqref{ezeroSk}. We thus obtain uniqueness of the limit $\lim_{n\to \infty} \frac{A(\gamma_n)-A(\gamma)}{\|\gamma_n-\gamma\|_{H^1}}$, independently of the subsequence and of the choice of the sequence $\gamma_n\to \gamma$. It turns out that $A'(\gamma)\gamma'=a'_0$.
Thus, we see that the proof of differentiability of $A$ is related to a rigidity property of the linearized equation \eqref{etapSk}--\eqref{ezeroSk}.
In particular, $A'(0)=0$, since for $\eta\equiv 0$,  any solution $\eta'$ of \eqref{etapSk} satisfies $\frac d{ds} (\eta'(s),Q)=0$ (see \cite{MMjmpa} for  first rigidity results in similar contexts).
 
The proof of  continuity of  $A'$  follows from similar arguments. See Section~4 for details.

Note that the above argument using weak convergence and weak solutions for \eqref{etapSk}  allow us to prove $C^1$
regularity of $A$ only from estimates \eqref{unSk}-\eqref{deuxSk} and their consequences on the linear equation \eqref{etapSk}--\eqref{ezeroSk}, without having to consider second-order difference estimates, i.e.  on the difference of two solutions of \eqref{etapSk} corresponding to different $\eta$.
 
\subsection*{Acknowledgements}  \quad

This work was partly supported by the project ERC 291214 BLOWDISOL. 

\section{Decomposition and estimates for the (Soliton) case}


\subsection{Regularity results due to the weight assumption}
The following weighted $L^\infty$ Sobolev bound is frequently used to bound nonlinear terms. 
\begin{claim} \label{le:Sob} 
Let $f,g:\RR\to[0,\I)$ be measurable and $w:\RR\setminus\{a\}\to[0,\I)$ be $C^1$ for some $a\in[-\I,+\I]$ and for all $x\in\RR\setminus\{a\}$, 
\EQ{
 |w(x)|\le\sqrt{f(x)g(x)}, \quad w'(x)\begin{cases} \ge -f(x) &(x>a),\\ \le +f(x) &(x<a).\end{cases} }
Then for any $\phi\in H^1_{loc}(\RR)$ such that $\liminf_{|x|\to\I}|\phi(x)|^2w(x)=0$, we have 
\EQ{
 \sup_{x\in\RR}|\phi(x)|^2w(x) \le 2\ip{|\phi(x)|^2}{f}+\ip{|\phi'(x)|^2}{g}.}
Moreover, if $w$ is monotone on $\RR$ ($a=\pm\I$), then 
\EQ{
 \sup_{x\in\RR}|\phi(x)|^2w(x) \le 2\sqrt{\ip{|\phi(x)|^2}{f} \ip{|\phi'(x)|^2}{g}}.}
\end{claim}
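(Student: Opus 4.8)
The plan is to derive both inequalities from the fundamental theorem of calculus applied to the scalar function $h:=|\phi|^2w$, using only the pointwise relations $|w|\le\sqrt{fg}$, the one-sided sign bounds on $w'$, Young's inequality $2ab\le a^2+b^2$, and Cauchy--Schwarz. First I would record the preliminary regularity fact: since $\phi\in H^1_{loc}(\RR)$ is continuous and locally absolutely continuous while $w\in C^1(\RR\setminus\{a\})$, the product $h$ is locally absolutely continuous on each of the two intervals $(-\I,a)$ and $(a,+\I)$, with $h'=2\phi\phi'w+|\phi|^2w'$ almost everywhere; hence the fundamental theorem of calculus may be applied on any compact subinterval avoiding $a$.

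For the general bound I would fix $x_0\ne a$ and, using the symmetry of the hypotheses under $x\mapsto -x$, treat the case $x_0>a$. On $(a,+\I)$ the bound $w'\ge-f$ gives $-|\phi|^2w'\le|\phi|^2f$, while $|w|\le\sqrt{fg}$ and Young's inequality give $|2\phi\phi'w|\le2|\phi||\phi'|\sqrt{fg}\le|\phi|^2f+|\phi'|^2g$; summing, $-h'\le2|\phi|^2f+|\phi'|^2g$ pointwise on $(a,+\I)$. Integrating this over $[x_0,X]$ for any $X>x_0$ yields
\[
 h(x_0)\le h(X)+\int_{x_0}^X\bigl(2|\phi|^2f+|\phi'|^2g\bigr)\,dx\le h(X)+2\ip{|\phi|^2}{f}+\ip{|\phi'|^2}{g}.
\]
The hypothesis $\liminf_{|x|\to\I}h=0$ supplies a sequence $X_n\to+\I$ along which $h(X_n)\to0$ (and, symmetrically, integration to the left toward $-\I$ when $x_0<a$, now using $w'\le f$ on $(-\I,a)$), so the boundary term is discarded; taking the supremum over $x_0$ gives the first inequality.

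For the sharp bound, monotonicity of $w$ forces $a=\pm\I$ and gives $w'$ a constant sign on $\RR$, so that in $h'=2\phi\phi'w+|\phi|^2w'$ the second summand has a fixed sign. Integrating $h'$ away from $a$ --- equivalently, toward the infinity at which $w$ is largest, which is precisely the end where the hypothesis $\liminf_{|x|\to\I}h=0$ is needed --- that summand enters with the favourable sign and may be dropped, leaving $h(x_0)\le2\int_\RR|\phi||\phi'|w\,dx\le2\int_\RR|\phi||\phi'|\sqrt{fg}\,dx$; one then invokes Cauchy--Schwarz to bound the right-hand side by $2\sqrt{\ip{|\phi|^2}{f}\,\ip{|\phi'|^2}{g}}$, which is the second inequality.

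I do not expect a serious obstacle: the whole argument is elementary calculus. The only two points requiring care are the preliminary justification that $h$ is absolutely continuous (so that one may legitimately differentiate the product and apply the fundamental theorem of calculus even though $\phi$ is merely $H^1_{loc}$), and the bookkeeping imposed by the one-sided nature of the hypotheses --- one must always integrate \emph{away} from the distinguished point $a$, and toward an infinity at which the hypothesis $\liminf_{|x|\to\I}h=0$ provides a null sequence for the boundary term.
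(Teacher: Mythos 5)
Your proposal is correct and follows essentially the same route as the paper: apply the fundamental theorem of calculus to $h=|\phi|^2w$ integrating away from $a$ toward an infinity where the $\liminf$ hypothesis kills the boundary term, then use the one-sided bound on $w'$ together with Young's inequality for the first estimate, and drop the sign-definite term $|\phi|^2w'$ and apply Cauchy--Schwarz for the monotone case. No gaps.
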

\begin{proof}
For all $y>a$, we have 
\EQ{
 |\phi(y)|^2w(y) & = \int_y^\I(-|\phi|^2w'-2\phi\phi'w)dx
 \\ &\le \int_a^\I(|\phi|^2f+2|\phi\phi'|\sqrt{fg})dx
 \le \int_a^\I(2|\phi|^2f+|\phi'|^2g)dx.}
If $w'\ge 0$ then we can drop the term $|\phi|^2f$ and apply Schwarz to the other. The estimate on $y<a$ is the same. 
\end{proof}

\begin{lemma}\label{le:regu}
 Let $u(t)$ be an $H^1$ solution of \eqref{kdv} on $[0,T_1]$ such that $u(0)\in L^2\Lw{0}{5}$. 
Then for $0<t<T_1$, $u(t)\in C^9(\RR)$ in $x$, and for all $\sigma>0$, $0\leq p\leq 10$, 
\begin{equation}\label{eq:tse}
\sup_{0\le t\le T_1}t^{\frac{N(p)}{2}}\|\partial_x^p u(t)\|_{L^2\Lw{\sigma}{5-\frac p2}} \le C(\sigma,p,T_1,\|u_0\|_{H^1\cap L^2(x_+^{10}dx)})<\infty,
\end{equation}
for some $N(p)\ge 0$ determined by $p$. 
Moreover, $\p_t^j \p_x^k u(t,x)\in C((0,T_1)\times \RR)$ for $3j+k\le 9$. In particular, $u$ is a classical solution of \eqref{kdv} on $(0,T_1]$.
\end{lemma}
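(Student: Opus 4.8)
The plan is to combine the local smoothing / gain-of-regularity theory for the gKdV equation (Kenig–Ponce–Vega, Kato) with the weighted energy method: the right-sided polynomial weight $x_+^{10}$ in the data is the mechanism that converts the parabolic-type local smoothing of the Airy group into genuine spatial decay and Sobolev regularity for positive times. First I would set up a hierarchy of weighted energy functionals
\EQ{
 E_{p,\sigma,k}(t) := \int |\partial_x^p u(t,x)|^2 \, w(\sigma,k,x)^2 \, dx,
}
for $0\le p\le 10$ with $k=k(p)$ decreasing in $p$ (the statement uses $k=5-p/2$), and $\sigma>0$ small. Differentiating $E_{p,\sigma,k}$ in $t$ along the equation and integrating by parts, the leading term from the Airy part $\partial_x^3$ produces, after three integrations by parts, a good term $+c\int |\partial_x^{p+1}u|^2 w_x$ (with the right sign because $w$ is increasing on the right, where the polynomial weight lives, and the exponential weight $e^{\sigma x}$ on the left has $w_x = \sigma w>0$ as well), against which all remaining error terms — lower-order weight derivatives, the nonlinear contribution from $(u^5)_x$ estimated by Claim \ref{le:Sob} together with Gagliardo–Nirenberg, and the commutator terms trading a weight power for a derivative — can be absorbed. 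This is exactly the structure of Kato's smoothing estimate, localized by the weight.

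The key steps, in order, would be: (1) Base case $p=0$: propagate the $L^2\Lw{\sigma}{5}$ norm forward in time (no $t$-weight needed, $N(0)=0$), using a weighted $L^2$ energy estimate where the bad nonlinear term is controlled by $\|u\|_{L^\infty}\lesssim\|u\|_{H^1}$ and the conserved $H^1$ bound; here the exponential weight on the left is harmless since $e^{\sigma x}\le 1$ there and one only needs the commutator $[\partial_x^3, w]$ to be controlled by $w$ plus lower order. (2) Inductive step: assuming control of $\partial_x^j u$ in $L^2\Lw{\sigma}{k(j)}$ with a power $t^{N(j)/2}$ for $j\le p-1$, run the weighted energy estimate for $\partial_x^p u$ with weight exponent $k(p)=k(p-1)-1/2$; the time weight $t^{N(p)}$ with $N(p)=N(p-1)+1$ (roughly) is inserted so that $\frac{d}{dt}(t^{N(p)}E_p) = N(p)t^{N(p)-1}E_p + t^{N(p)}E_p'$, and the first term is dominated using the smoothing gain from level $p-1$ integrated in time — this is the standard "gain one derivative per unit of time-decay, at the cost of some weight" bookkeeping. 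Iterating ten times reaches $p=10$ (hence $u(t)\in C^9$ in $x$ by Sobolev embedding, as $H^{10}_{loc}\hookrightarrow C^9$). (3) Time regularity: once $\partial_x^k u(t)$ is known to be continuous in $t$ with values in a weighted $L^2$ space for $k\le 10$, use the equation $u_t = -(u_{xx}+u^5)_x = -\partial_x^3 u - \partial_x(u^5)$ to express $\partial_t^j\partial_x^k u$ in terms of $\partial_x^{3j+k}u$ and nonlinear products thereof; $3j+k\le 9$ keeps everything within the $H^{10}$-regularity already obtained, and continuity in $(t,x)$ follows from Sobolev embedding in $x$ and the equation in $t$. (4) Conclude that $u$ solves \eqref{kdv} pointwise, i.e. classically, on $(0,T_1]$, and that the constant in \eqref{eq:tse} depends only on $\sigma,p,T_1$ and $\|u_0\|_{H^1\cap L^2(x_+^{10}dx)}$ by tracking the dependence through the energy inequalities and Grönwall.

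The main obstacle I expect is bookkeeping the admissible $(p,k(p),N(p))$ triple so that: (i) the weight power $k(p)$ stays nonnegative up to $p=10$ — this is precisely why the data needs ten powers of $x_+$, i.e. $L^2(x_+^{10}dx)$, and $k(p)=5-p/2$ is the borderline choice reaching $k=0$ at $p=10$; and (ii) at each step the nonlinear term $\partial_x^p\partial_x(u^5)$, which contains terms like $(\partial_x^{p}u)\, u^4\, \partial_x u$ up to $\partial_x^{p+1}u \cdot u^5$, is genuinely lower order relative to the smoothing gain $\int|\partial_x^{p+1}u|^2 w_x$ after one integration by parts, using the weighted $L^\infty$ bound of Claim \ref{le:Sob} on $u$ and lower derivatives and the already-established estimates. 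The "right-sided" nature of the polynomial weight (we lose decay as $x\to-\infty$, only keeping the exponential $e^{\sigma x}$ with $\sigma>0$, which decays as $x\to-\infty$) must be used consistently: all weights are increasing in $x$, so every integration by parts of the Airy term yields a favorably-signed principal term, and this is the crucial structural fact making the whole scheme close. Once the signs and the $(p,k,N)$ table are pinned down, each individual estimate is routine Grönwall.
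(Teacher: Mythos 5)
Your plan is essentially the paper's proof: a Kato-type weighted $L^2$ energy estimate, inducted from $p=0$ to $p=10$, converting each half-power of the right-sided polynomial weight into one derivative via the favorably signed smoothing term $-3\int|\partial_x^{p+1}u|^2\phi_\sigma^{(p+1)}$, with a $t$-power absorbing the initial singularity and the nonlinear terms controlled via the weighted $L^\infty$ Sobolev bound of Claim~\ref{le:Sob}. Two bookkeeping points worth flagging against the paper: it constructs a single master weight $\phi=\calI^{11}(e^x\tilde\chi)$ and uses $\phi_\sigma^{(p)}$ as the weight at level $p$, so that the smoothing term produced by differentiating $E_{p,\sigma}$ is exactly $-3E_{p+1,\sigma}$ (no commutators between levels); and because the nonlinearity is quintic, the recursion that actually closes is $N(p)=\max(5N(p-1),1)$ (giving $N(p)=2\cdot5^{p-2}$ for $p\ge 2$), not $N(p)=N(p-1)+1$ as you suggest — harmless for the statement, which only asks for some finite $N(p)$, but it is what makes the Gr\"onwall/induction step go through.
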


\begin{proof}
This result can be deduced from  Kato \cite{Kato}. Since $u_0\in H^1$ where \eqref{kdv} is locally wellposed, it suffices to prove the a priori bound \eqref{eq:tse} for smooth rapidly decaying solutions. 
For brevity, we denote $u^{(p)}:=\p_x^pu$. 
Let $\phi:\RR\to\RR$ be a $C^\I$ function such that  
\be
 \phi^{(p)}(x) \sim \begin{cases} e^x &(x<0) \\ 1+x^{10-p} &(x>0) \end{cases} \qquad (p=0,1,\dots,10) 
\ee
and $\phi^{(11)}\ge 0$. For example, fix $\tilde\chi\in C^\infty(\RR)$ such that $\tilde\chi(x)=1$ for $x\le-1$ and $\tilde\chi(x)=0$ for $x\ge -1/2$, and put 
\EQ{ \label{defphi}
 \phi(x)=\calI^{11}(e^x\tilde\chi(x)),}
where $\calI$ is the integral operator defined by $\calI f(x)=\int_{-\I}^x f(y)dy$.  For any smooth rapidly decaying solution $u$, and $p=0,1,\dots,10$, we have the weighted $L^2$ identity: 
\EQ{ \label{wL2id}
 \p_t\ip{(u^{(p)})^2}{\phi_\si^{(p)}} &=-3\ip{(u^{(p+1)})^2}{\phi_\si^{(p+1)}}
 +\ip{(u^{(p)})^2}{\phi_\si^{(p+2)}}
  +2\ip{\p_x^{p+1}(u^5)}{u^{(p)}\phi_\si^{(p)}},}
where $\phi_\si(x):=\phi(\si x)$. 
Henceforth, positive constants may depend on $\si,T_1$. Putting 
\EQ{
 E_{p,\si}:=\ip{(u^{(p)})^2}{\phi_{\si}^{(p)}},}
we have the weighted $L^\infty$ bounds from Lemma \ref{le:Sob}
\EQ{ \label{wLIbd}
 & \|u^{(p)}(\phi_\si^{(p+1)})^{\frac12}\|_\infty \lec E_{p,\si}^{\frac 14}E_{p+1,\si}^{\frac 14},
 \quad \|u^{(1)}(\phi_\si^{(2)})^{\frac 14}\|_\infty \lec \|u_x\|_2^{\frac 12}E_{2,\si}^{\frac 14},
 \\& \|u(1+x_+)^{\frac 52}\|_\infty \lec (E_{0,\si}+\|u\|_{L^2}^2)^{\frac 14}\|u_x\|_2^{\frac 12},
 \quad \|u\|_{L^\I} \lec \|u\|_{H^1}. }

Now we prove the following a priori bound by induction on $p=0,1,\dots,10$: For all $\si>0$, 
\EQ{ \label{wL2indct}
 t^{N(p)}e^{-C_0t}E_{p,\si}(t) + \int_0^t s^{N(p)}e^{-C_0s}E_{p+1,\si}(s)ds \le C,}
where $N(p)\ge 0$ is non-decreasing in $p$, while $C_0,C>0$ depend on $p,\si,T_1$. 
It is obvious that \eqref{wL2indct} implies \eqref{eq:tse}. 

\noindent{\bf (i) $p=0$.} In this case, the last term of \eqref{wL2id} is estimated by 
\EQ{
 2\ip{\p_x(u^5)}{u \phi_\si}=-\frac 53\ip{u^6}{\phi_\si'} \lec \|u\|_{H^1}^4E_{0,\si}.}
Plugging this into \eqref{wL2id} and integrating it by $t$, we obtain \eqref{wL2indct} for $p=0$ with $N(p)=0$. 

\noindent{\bf (ii) $p\ge 1$.} We assume \eqref{wL2indct} up to $p-1$. In this case, the last term of \eqref{wL2id} is equal to 
\EQ{
 -2\ip{\p_x^p(u^5)}{u^{(p+1)}\phi_\si^{(p)}+u^{(p)}\phi_\si^{(p+1)}},}
and the nonlinear term in \eqref{wL2id} is expanded
\EQ{ \label{u5expand}
 \p_x^p(u^5)=5u^4u^{(p)}+c_0^p u^3u^{(1)}u^{(p-1)}+\sum_{\substack{\al_1+\cdots+\al_5=p \\ \al_j\le p-2}}c_\al^p u^{(\al_1)}\cdots u^{(\al_5)}}
with some coefficients $c_0^p,c_\al^p\in\RR$. 
The first term appears for all $p\ge 0$, whose contribution in \eqref{wL2id} is estimated using \eqref{wLIbd}
\EQ{
 &\ip{u^4u^{(p)}}{u^{(p+1)}\phi_\si^{(p)}+u^{(p)}\phi_\si^{(p+1)}}
 \\&\le \|u^4(\phi_\si^{(p)})^{\frac 12}(\phi_\si^{(p+1)})^{\frac{-1}2}\|_{L^\I} E_{p,\si}^{\frac 12}E_{p+1,\si}^{\frac 12} + \|u^4 \phi_\si^{(p+1)}(\phi_\si^{(p)})^{-1}\|_{L^\I} E_{p,\si},}
where both the $L^\I$ norms are bounded by using \eqref{wLIbd}
\EQ{
 \|u^4(1+x_+)^{\frac 12}\|_{L^\I} 
 \le \|u(1+x_+)^{\frac 52}\|_{L^\I}^{\frac 15} \|u\|_{L^\I}^{\frac{19}{5}}
 \lec (E_{0,\si}+\|u\|_{L^2}^2)^{\frac{1}{20}}\|u\|_{H^1}^{\frac{39}{10}}.}
The second term of \eqref{u5expand} appears only for $p\ge 2$, whose contribution is estimate by
\EQ{
 &\ip{u^3u^{(1)}u^{(p-1)}}{u^{(p+1)}\phi_\si^{(p)}}
 \\&\le \|u^3 \phi_\si^{(p)}(\phi_\si^{(2)})^{\frac{-1}4}(\phi_{\frac{\si}2}^{(p-1)})^{\frac{-1}2}(\phi_\si^{(p+1)})^{\frac{-1}2}\|_{L^\I}\|u^{(1)}(\phi_\si^{(2)})^{\frac 14}\|_{L^\I} E_{p-1,\frac{\si}2}^{\frac 12}E_{p+1,\si}^{\frac 12}   
 \\&\lec \|u^3(1+x_+)^{-2}\|_{L^\I} \|u_x\|_{L^2}^{\frac 12} E_{2,\si}^{\frac 14} E_{p-1,\frac{\si}2}^{\frac 12}E_{p+1,\si}^{\frac 12} 
 \lec \|u\|_{H^1}^{\frac 72} E_{2,\si}^{\frac 14} E_{p-1,\frac{\si}2}^{\frac 12}E_{p+1,\si}^{\frac 12},}
and similarly for the inner product with $u^{(p)}\phi_\si^{(p+1)}$. 
The third term of \eqref{u5expand} appears only for $p\ge 3$, whose contribution is estimated by 
\EQ{
 &\ip{u^{(\al_1)}\cdots u^{(\al_5)}}{u^{(p+1)}\phi_\si^{(p)}}
 \\&\le \prod_{j=1}^4\|u^{(\al_j)}(\phi_{\frac{\si}5}^{(\al_j+1)})^{\frac 12}\|_{L^\I} E_{\al_5,\frac{\si}{5}}^{\frac 12} E_{p+1,\si}^{\frac 12}
 \lec \prod_{j=1}^4 E_{\al_j+1,\frac{\si}{5}}^{\frac 12} E_{\al_5,\frac{\si}{5}}^{\frac 12} E_{p+1,\si}^{\frac 12},}
and similarly for the inner product with $u^{(p)}\phi_\si^{(p+1)}$. 
Injecting them into \eqref{wL2id} yields
\EQ{ \label{wL2ineqsy}
 \p_t E_{p,\si} + E_{p+1,\si} \le C_0 E_{p,\si} + C_1(p)E_{p-1,\frac{\si}2}^2 + C_2(p)(E_{2,\si}+E_{0,\frac{\si}{5}}^5+\dots+E_{p-1,\frac{\si}{5}}^5),}
where $C_1(p)=0$ for $p\le 1$ and $C_2(p)=0$ for $p\le 2$. 
Then, using the upper bound of $t\le T_1<\I$ as well, we obtain 
\EQ{
 \p_t e^{-C_0t}E_{p,\si} + e^{-C_0t}E_{p+1,\si} \lec t^{-5N(p-1)}, }
and so, choosing $N(p)=\max(5N(p-1),1)$,  
\EQ{
 &\p_t(t^{N(p)}e^{-C_0t}E_{p,\si}) + t^{N(p)}e^{-C_0t}E_{p+1,\si}
  \lec \max(1,t)+ t^{N(p)-1}e^{-C_0t}E_{p,\si},}
where the integral of the last term on $[0,T_1]$ is bounded by the induction hypothesis \eqref{wL2indct} for $p-1$, while the initial data of $t^{N(p)}e^{-C_0t}E_{p,\si}$ is zero. Thus we obtain \eqref{wL2indct} for all $p=0,1,\dots,10$, which implies \eqref{eq:tse}. Working more precisely with \eqref{wL2ineqsy}, we can also obtain \eqref{wL2indct} and \eqref{eq:tse} with 
\EQ{
 N(0)=0,\ N(1)=1,\ N(p)=2\cdot 5^{p-2}\ (p\ge 2).}

Finally, using the equation of $u$, we deduce that $\p_t^k u(t)\in H^{10-3k}_{loc}\subset C^{9-3k}$ for $0<t<T_1$. In particular, the equation \eqref{kdv} is satisfied in the classical sense on $[0,T_1]$.
\end{proof} 

\subsection{Smooth decomposition for the (Soliton) case} \label{ss:decop}
In \cite{MMR1}, a decomposition of solutions close to solitons based on special profiles $Q_b$
has been introduced.
However, the map $b \mapsto Q_b \in L^2$, defined in  \cite{MMR1} (see its definition in\eqref{eq:210}), is not differentiable at $b=0$, since $\frac {\partial Q_b}{\partial b}|_{b=0} \not \in L^2$. The lack of regularity of the map $b\mapsto Q_b$ complicates the use of this parametrization to prove the $C^1$ regularity of the manifold $\mathcal M$. 
Another difficulty in using the coordinate in \cite{MMR1} is that the rescaling and the translation are unbounded on the radiation part, namely 
\EQ{
 u(x) \mapsto u_{(\la,c)}(x)=\la^{\frac{-1}{2}}u\left(\frac{x-c}{\la}\right)}
is not uniformly continuous in $(\la,c)$ on $H^1\cap L^2(x_+^{10}dx)$ (or any standard Sobolev-type space). 
These are the reasons why we use in this paper a different decomposition. 

Let $u$ be a solution of \eqref{kdv} on $0\le t<T$ with initial data of the form
\EQ{ \label{init form}
 u(0)=Q+\eta_0, \quad \eta_0\in\calA_0.}
Decompose the solution $u$ for $0\le t<T$ by putting
\EQ{
 u(t)=(Q+\eta(t))_{(\mu(t),z(t))},\quad \mu(0)=1,\ z(0)=0,\ \eta(0)=\eta_0,}
for some $(\mu,z)\in C([0,T);(0,\I)\times\RR)$ satisfying $\mu(0)=1$ and $z(0)=0$. 
By the same change of the variables as in \cite{MMR1}
\EQ{ \label{tx2sy}
 & (t,x) \mapsto (s,y); \quad s=\int_0^t \mu(t')^{-3}dt',\ y=\mu(t) x + z(t),
 \\& (s,y) \mapsto (t,x); \quad t=\int_0^s \mu(s')^3ds',\ x=\frac{y-z(s)}{\mu(s)},}
the equation \eqref{kdv} in $(t,x)$ is transformed into the following equation in $(s,y)$:
\EQ{ \label{eqeta}
 & \p_s\eta = \p_y(L \eta -R(\eta)) + \vec \Omega \cdot \vec\p(Q+\eta),}
where 
\EQ{ \label{defOmZ}
 & \Om:=\frac{\mu_s}{\mu}, \quad Z:=\frac{z_s}{\mu}-1, \quad \vec \Omega = \mat{\Omega\\ Z}, \quad \vec\p:=\mat{\La \\ \p_y},}
and  
\EQ{ \label{defR}
 R(\eta):=(Q+\eta)^5-Q^5-5Q^4\eta=\sum_{j=2}^5\frac{5!Q^{5-j}\eta^j}{j!(5-j)!}.}

Now we choose the modulation parameter $(\mu,z)$ by orthogonality forcing 
\EQ{ \label{eqforc}
  (\p_s+1)\ip{\eta}{\vec\La Q}=0, \quad \vec\La :=\mat{\La \\ y \La},}
which is an extension of the standard orthogonality: $\ip{\eta(0)}{\vec\La Q}=\p_s\ip{\eta}{\vec\La Q}=0$. The advantage of \eqref{eqforc} is the absence of initial configuration which could cause loss of regularity mentioned above. 
Injecting \eqref{eqeta} into \eqref{eqforc} yields 
\EQ{
 0=M(\eta)\mat{\Om \\ Z} + \ip{\vec\La Q}{\eta+\p_y(L\eta-R(\eta))},}
where $M(\eta)$ is the $2\times 2$ matrix defined by 
\EQ{ \label{defM}
 M(\eta) =\mat{\ip{\La Q}{\La(Q+\eta)} & \ip{\La Q}{\p_y(Q+\eta)} \\ \ip{y\La Q}{\La (Q+\eta)} & \ip{y\La Q}{\p_y(Q+\eta)}} 
 &=\|\La Q\|_{L^2}^2I-\ip{M'}{\eta}, 
 \\& M':=\mat{\La \La Q & \p_y\La Q \\ \La y\La Q & \p_y y \La Q}.}
Here we used that $\ip{y\La Q}{\p_y Q}=\ip{\La Q}{\La Q-Q/2}=\|\La Q\|_{L^2}^2$. 
Hence, as long as $\|\eta\|_{L^2_{loc}}\ll 1$, $M(\eta)$ is invertible and so 
\EQ{ \label{eqOmZ}
 \mat{\Om \\ Z}=-M(\eta)^{-1}\ip{\vec\La Q}{\eta+\p_y(L\eta-R(\eta))}=:\mat{\om(\eta) \\ \ze(\eta)} =: \vec\Om(\eta).}
Thus we obtain an autonomous equation of $\eta$:
\EQ{ \label{eqetaauto}
 \p_s\eta = \p_y(L \eta -R(\eta))+ \vec\Om(\eta)\cdot\vec\p(Q+\eta),}
where $L$, $R$ and $\vec\p$ were defined respectively in \eqref{deflplus}, \eqref{defR}, \eqref{defOmZ}. 

Multiplying \eqref{eqetaauto} with $\vec\La Q$ shows that it is equivalent to the system of \eqref{eqeta} and \eqref{eqforc}, and, via the change of variables \eqref{tx2sy} and \eqref{defOmZ}, to the original equation \eqref{kdv}, both as long as $\|\eta\|_{L^2_{loc}}\ll 1$. 
The equivalence is valid both in the classical sense and in the distribution sense in $y$. 
Starting from \eqref{eqetaauto}, the modulation parameter $(\mu,z)$ is given in terms of $\eta$: 
\EQ{ \label{eta2muz}
 \log\mu(s)=\int_0^s \om(\eta(s'))ds', \quad z(s)=\int_0^s \mu(s')(\ze(\eta(s'))+1)ds'.}

Now we recall the weighted energy introduced in \cite{MMR1}. 
Let  $\varphi,\psi\in C^{\infty}({\mathbb R})$  be such that:
\bea
\label{defphi1bis}
\varphi(y) =\left\{\begin{array}{lll}e^{y}\ \ \mbox{for} \ \   y<-1,\\
 1+y  \ \ \mbox{for}\ \ -\frac 12<y<\frac 12\\ 
 y^2\ \ \mbox{for}\ \ \mbox{for}\ \ y>2
 \end{array}\right., \ \ \varphi'(y) >0, \ \ \forall y\in \RR,
\eea
\bea
\label{defphi2}
\psi(y) =\left\{\begin{array}{ll} e^{2y}\ \ \mbox{for}   \ \ y<-1,\\
 1  \ \ \mbox{for}\ \ y>-\tfrac 12\end{array}\right., \ \ \psi'(y) \geq 0, \ \ \forall y\in \RR,
\eea
For  $B\geq 100$ large enough, let
\be\label{pB}
\psi_B(y)=\psi\left(\frac yB\right), \ \ \varphi_{B}(y)=\varphi \left(\frac yB\right).
\ee
and for any function $f( y)$, let
\be\label{eq:noeta}
{\mathcal{N}_B}[f]   = \int  f_y^2( y) \psi_B(y)dy 
+ \int  f^2( y) \varphi_B(y)   dy.
\ee
The following lemma gathers the estimates on the solution $\eta$ in the (Soliton) regime which we need to construct the manifold. 

\begin{proposition}[Sharp estimates in the (Soliton) regime]\label{le:2bis}
Let $\eta_0\in\calA_0$ and assume that the solution $u$ of \eqref{kdv} with the initial data $u(0)=Q+\eta_0$ satisfies the {\rm(Soliton)} regime. Then the equation \eqref{eqetaauto} with the initial data $\eta(0)=\eta_0$ has a unique global solution $\eta$ satisfying for all $s\ge 0$ 
\be
\label{controle3}
 \|\eta(s)\|_{H^1} +|\mu(s)-1| \lesssim   \delta(\alpha_0),
\ee
where $\mu$ is given by \eqref{eta2muz}.

\noindent{\em (ii) Sharp decay estimates.} For any $B>100$ large enough, if $\al_0>0$ is small enough then
\begin{equation}\label{eq:eta1}
{\mathcal N}_B[\eta(s)]+|\vec\Om(\eta)|^2 
\lesssim \delta(\alpha_0) (1+s)^{-7},
\end{equation}
for all $s\geq 0$, 
where $\vec\Om$ was defined in \eqref{eqOmZ}, 
\be\label{eq:eta2}
 \|\eta\|_{L^\I_s L^2_y\Lw{\frac{1}{B}}{\frac 92}} + \|\eta\|_{L^2_s H^1_y\Lw{\frac{1}{B}}{4}}
 \lec \delta(\al_0). 
\ee

\noindent{\em (iii) Smoothing estimates.} 
For all $T,\si>0$ and $0\le p\le 10$, there is $N(p)\ge 0$ such that 
\EQ{ \label{regeta}
 \sup_{0\le s\le T}s^{\frac{N(p)}{2}}\|\p_y^p \eta(s)\|_{L^2_y\Lw{\si}{0}}<\I.}
Moreover, $\p_s^j\p_y^k\eta(s,y)\in C((0,\I)\times\RR)$ for $3j+k\le 9$. In particular, $u$ is a classical solution of \eqref{eqetaauto}. 

\noindent{\em (iv) Scattering.}  There exist $\lambda_\infty$, $x_\infty$ and $w_\infty\in H^1$ such that
\begin{equation}\label{parprop}
\mu(t)\to \lambda_\infty,\quad z(t)-\lambda^\infty t \to x_\infty,
\end{equation}
and
\begin{equation}\label{scatprop}
\left\|u(t)- Q_{\lambda_\infty,x_\infty}(\cdot-\lambda_\infty^2 t - x_\infty) - e^{-t \partial_x^3} w_\infty \right\|_{H^1}
\rightarrow 0 \quad \hbox{as $t\to +\infty$}.
\end{equation}
\end{proposition}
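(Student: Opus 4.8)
The plan is to derive Proposition~\ref{le:2bis} by transporting the information available on the (Soliton) regime from \cite{MMR1} into the coordinates $(s,y,\eta,\mu,z)$ fixed by the orthogonality forcing~\eqref{eqforc}, and then upgrading it using the weighted regularity of Lemma~\ref{le:regu}. Since $u$ is in the (Soliton) regime of Theorem~\ref{th:3} it is global and stays in $\mathcal T_{\alpha^*}$; in particular \cite{MMR1} provides a decomposition $u(t)=(Q_{b(t)}+\varepsilon(t))_{(\lambda(t),x(t))}$ with $\lambda(t)\to\lambda_\infty>0$, $b(t)\to 0$ and uniform smallness of $\|\varepsilon(t)\|_{H^1}$ and $|b(t)|$. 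As long as $\|\eta\|_{L^2_{loc}}\ll1$ the matrix $M(\eta)$ of~\eqref{defM} is invertible and the system~\eqref{eqetaauto}--\eqref{eta2muz} is equivalent to~\eqref{kdv}; a standard modulation/continuity argument — using that the directions $\vec\Lambda Q$ are non-degenerate and that $Q_b\to Q$ in the relevant norms as $b\to0$ — shows that the new decomposition is a smooth, close-to-identity change of the parameters $(\lambda,x,b)\mapsto(\mu,z)$ of the one in \cite{MMR1}, hence is defined for all $s\ge0$ with $\|\eta(s)\|_{H^1}\lesssim\delta(\alpha_0)$, and uniqueness of $\eta$ solving~\eqref{eqetaauto} with $\eta(0)=\eta_0$ follows since the change of variables is invertible. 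Working throughout with the $(s,y)$ equation avoids both the non-differentiability of $b\mapsto Q_b$ and the non-uniform continuity of the rescaling on $L^2(x_+^{10}dx)$. For part (iii), from $\eta_0\in\calA_0$ one has $u(0)=Q+\eta_0\in L^2\Lw{0}{5}$, so Lemma~\ref{le:regu} applies on every $[0,T]$ and yields~\eqref{eq:tse}; transporting these bounds through~\eqref{tx2sy}, which is smooth and uniformly non-degenerate since $\mu$ stays close to $1$, gives~\eqref{regeta} and the claimed space-time continuity, so $\eta$ solves~\eqref{eqetaauto} classically.

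The core is parts (ii) and (iv). The forcing~\eqref{eqforc} gives $\ip{\eta(s)}{\Lambda Q}=e^{-s}\ip{\eta_0}{\Lambda Q}$ and $\ip{\eta(s)}{y\Lambda Q}=e^{-s}\ip{\eta_0}{y\Lambda Q}$, so the two soliton generalized-kernel directions are exponentially negligible and the usual coercivity holds on their orthogonal complement. Running the mixed energy--Virial scheme of \cite{MMR1,MMR2} on~\eqref{eqetaauto}, but now carrying along the polynomial weight $y_+^{10}$ — which, the radiation escaping to the left, is propagated (does not grow) by a weighted energy estimate in the spirit of \cite{Kato} — yields a Lyapunov functional equivalent to $\mathcal N_B[\eta]$ whose dissipation controls $\|\eta\|_{L^2_{loc}}^2$ up to a flux term; the weighted control bounds this flux and, together with the a~priori decay of $\mathcal N_B[\eta]$ from \cite{MMR1}, improves the rate to $(1+s)^{-7}$ in~\eqref{eq:eta1} while also producing the weighted bounds~\eqref{eq:eta2}. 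The estimate on $|\vec\Om(\eta)|$ in~\eqref{eq:eta1} then follows from~\eqref{eqOmZ}: integrating by parts, $|\vec\Om(\eta)|\lesssim\|\eta\|_{L^2_{loc}}(1+\|\eta\|_{L^2_{loc}})\lesssim\mathcal N_B[\eta]^{1/2}$; and since $|\vec\Om|\in L^1_s$ with small norm, formula~\eqref{eta2muz} gives the uniform bound $|\mu(s)-1|\lesssim\delta(\alpha_0)$ of (i) and, with $dt=\mu^3\,ds$, the convergences~\eqref{parprop}. Finally, setting $w(t):=u(t)-Q_{\lambda_\infty,x_\infty}(\cdot-\lambda_\infty^2t-x_\infty)$, the function $w$ solves a perturbed Airy equation whose source and quintic terms are, by~\eqref{eq:eta1}--\eqref{eq:eta2} and the boundedness of $\|u\|_{H^1}$, integrable in time in suitable Strichartz/$H^1$ norms, so $e^{t\p_x^3}w(t)$ converges in $H^1$ to some $w_\infty$; this is~\eqref{scatprop}, and it is here that the local well-posedness theory of \cite{KPV} enters.

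The main obstacle is the decay step: arranging the mixed energy--Virial functional so that its dissipation, the exponential smallness of the two kernel projections, and the propagated $y_+^{10}$-weight combine to upgrade the \cite{MMR1} rate all the way to $(1+s)^{-7}$ — in particular closing the flux/boundary terms of the Virial identity at the weighted level and checking that the weight is genuinely non-increasing along the flow, so that no loss enters from the right. The identification of the two decompositions in the first step also needs some care, precisely because the rescaling is not uniformly continuous on $L^2(x_+^{10}dx)$, but this is circumvented by never leaving the $(s,y)$ formulation.
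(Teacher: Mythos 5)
Your overall architecture matches the paper's: Step 1 imports the $Q_b$-decomposition and the almost-monotonicity of $\calN_B$ from \cite{MMR1} and transfers it to the $(\eta,\mu,z)$ variables; part (iii) is Lemma \ref{le:regu}; part (iv) is the KPV Duhamel/Strichartz argument with the source terms made integrable by (ii). The bound $|\vec\Om(\eta)|\lec\|\eta\|_{L^2_{loc}}$ and the deduction of \eqref{parprop} from $|\vec\Om|\in L^1_s$ are also as in the paper.

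However, there is a genuine gap at the heart of part (ii), and you have flagged it yourself as ``the main obstacle'' without resolving it: you never explain the mechanism that produces the rate $(1+s)^{-7}$. What \cite{MMR1} gives in the (Soliton) regime is only the integrated bound $\int_0^\infty\|\eta(s)\|_{L^2_{loc}}^2\,ds\lec\calN_B[\eta(0)]$ (there is no a priori pointwise decay rate of $\calN_B[\eta]$ to ``improve''), and the decay does not come from the Virial dissipation or from the exponential smallness of $\ip{\eta}{\vec\La Q}$ — those ingredients are used only later, for the difference estimates of Section 3. The paper's actual device is a hierarchy of weighted norms $M_{k,B}(s)=\ip{\eta^2}{\phi^{(k)}(\mu y/2B)}+\calN_B[\eta]$, $k=0,\dots,7$, built from the successive derivatives of the Kato weight $\phi$ (so that $\phi^{(k)}\sim 1+y^{10-k}$ on the right and $(\phi^{(k)})'=\phi^{(k+1)}$ up to scaling). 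Each level satisfies an almost-monotonicity inequality $M_{k,B/2}(s_2)+\tfrac12\int_{s_1}^{s_2}M_{k+1,B}\,ds\le C(B)M_{k,B}(s_1)$, i.e.\ the $k$-th norm controls the time integral of the $(k+1)$-st; iterating this ($\frac{s^k}{k!}M_{k+1,B/2}(s)\le C(2C)^kM_{1,B}(0)$) trades one power of the polynomial spatial weight for one power of decay in $s$ at each step, and seven iterations yield $\calN_B[\eta(s)]\lec(1+s)^{-7}M_{1,2B}(0)$, with $M_{1,2B}(0)\lec\delta(\al_0)$ by interpolating the $H^1$ smallness against the $y_+^{10}$ weight of $\calA_0$. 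Without this (or an equivalent) quantitative bootstrap, the statement \eqref{eq:eta1} — and hence the $(1+s)^{-7/2}$ decay of $|\vec\Om|$ that you use for \eqref{parprop} and for the integrability of the source $G$ in the scattering step — is not established, so the proof of (ii) and, downstream, of (iv) is incomplete as written.
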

\begin{remark}
The parameter $B$ can be taken arbitrarily large, but the larger $B$ requires the smaller $\al_0$. This is always the case throughout the paper. In other words, the parameter $\al_0$ is chosen depending on the parameter $B$. Hence within a small factor $\delta(\al_0)$, the dependence on $B$ is often ignored (e.g., in \eqref{eq:eta1} and \eqref{eq:eta2}), even if it blows up as $B\to\I$. 
\end{remark}

\begin{proof}
{\bf Step 1} Basic estimates from \cite{MMR1}.
 From the definition in Theorem \ref{th:3} and results in \cite{MMR1}, a solution $u(t)$ in the (Soliton) regime is global, bounded, and satisfies
$$
\forall t\geq 0, \quad \inf_{x_0\in \RR}\|u(t)-Q(.-x_0)\|_{H^1} \lesssim \delta(\alpha_0).
$$
To state more precise asymptotic results, we recall the decomposition of $u(t)$ adopted in \cite{MMR1}.
We consider a $C^{\infty}$ function  $\chi$   such that $0\leq \chi \leq 1$, $\chi'\geq 0$ on $\mathbb{R}$,
$\chi\equiv 1$ on $[-1,+\infty)$, $\chi\equiv 0$ on $(-\infty,-2]$.
For $b\in \RR$, define
\begin{equation}\label{eq:210}
Q_b(y) = Q(y) + b \chi_b(y)   P (y) 
\quad\hbox{where}\quad
\chi_b(y)= \chi\left(|b|^{\frac 34} {y} \right),
\end{equation}
where $P(y)\in C^\I$ is in the generalized kernel of $\p_yL$, exponentially decaying on the right while $P(-\I)>0$. See \cite[Proposition 2.2]{MMR1} for the precise definition. 

\begin{lemma}[Decomposition around $Q_b$, \cite{MMR1}]\label{le:2}
Let $\eta_0\in\calA_0$ and assume that the solution of \eqref{kdv} corresponding to $u_0=Q+\eta_0$ satisfies the (Soliton) regime. Then, the following holds.

\noindent{\em (i) Decomposition:} There exist unique $ C^1$ functions $({\lambda}, x,{b}):[0,+\infty) \to (0,+\infty)\times \mathbb{R}^2$ such that
\begin{equation}\label{defofeps}
\forall t\geq 0,\quad
\varepsilon(t,y)={\lambda}^{\frac 12}(t) u(t,{\lambda}(t) y + x(t)) - Q_{{b}(t)}(y)
\end{equation}
satisfies
\be
\label{ortho1}
  \ip{\e(t)}{y {\Lambda}   Q} = \ip{\e(t)}{\Lambda Q} = \ip{\e(t)}{Q} = 0,
\ee
and 
\be
\label{controle}
\|\varepsilon(t)\|_{H^1}+ |{b}(t)|+|1-\lambda(t)|\lesssim   \delta(\alpha_0).
\ee
{\rm (ii) Estimate in the (Soliton) case:} for $B$ large enough,
\be\label{bN}
\hbox{for all $t\geq 0$,}\quad 
|b(t)|\lesssim \mathcal{N}_B[\e(t)],
\ee
\be
\label{fdab}
\hbox{for all $0\leq t_1\leq t_2$,}\quad 
\mathcal N_B[\e(t_2)] +\int_{t_1}^{t_2}\left( b^4(t)+ \ip{(\e_y)^2+ \e^2}{\varphi_B'}\right) dt \le C(B) \mathcal N_B[\e(t_1)].\ee
\end{lemma}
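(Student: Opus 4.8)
The plan is to prove Lemma~\ref{le:2}, which is an adaptation of the decomposition and monotonicity results from \cite{MMR1} to solutions in the (Soliton) regime. The key point is that all the hard analysis (modulation theory, the mixed energy--Virial functional controlling $\mathcal N_B[\e]$) is already available in \cite{MMR1}; here we only need to record the statement in the form convenient for the present construction. I would proceed in three steps.

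\emph{Step 1: Decomposition near $Q_b$.} First I would invoke the standard modulation argument. Since the solution is in the (Soliton) regime, for all $t\ge 0$ we have $\inf_{x_0}\|u(t)-Q(\cdot-x_0)\|_{H^1}\lesssim\delta(\alpha_0)$. The map $(\lambda,x,b)\mapsto Q_b(\tfrac{\cdot-x}{\lambda})\lambda^{-1/2}$ is $C^1$ near $(\lambda,x,b)=(1,0,0)$ (away from $b=0$ one has better regularity, but $C^1$ suffices here since we only claim $C^1$ functions $\lambda,x,b$), and the differential of the three orthogonality conditions $\ip{\e}{y\Lambda Q}=\ip{\e}{\Lambda Q}=\ip{\e}{Q}=0$ with respect to $(\lambda,x,b)$ is, at $\e=0$, a triangular-type matrix that is invertible because $\ip{\Lambda Q}{Q}=0$, $\ip{\Lambda Q}{\Lambda Q}\ne 0$, $\ip{P}{Q}\ne 0$ (these are the standard non-degeneracy facts from \cite{MMR1}, e.g.\ \cite[Lemma~2.5]{MMR1}). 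The implicit function theorem then produces unique $C^1$ functions $(\lambda,x,b)(t)$ with $\e(t)$ as in \eqref{defofeps} satisfying \eqref{ortho1}, and the smallness \eqref{controle} follows by continuity from the $H^1$-closeness to the soliton manifold. This is verbatim \cite[Lemma~2.6 / Section~2]{MMR1}, so in the write-up I would simply cite it.

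\emph{Steps 2--3: the two (Soliton)-regime estimates.} Estimate \eqref{bN}, $|b(t)|\lesssim\mathcal N_B[\e(t)]$, is proved in \cite{MMR1}: it follows from the algebraic relation between $b$ and $\e$ coming from the equation plus the orthogonality $\ip{\e}{Q}=0$, together with the fact that $P$ is essentially constant on the bulk where $\chi_b=1$, so $b$ is controlled by a localized quadratic-type quantity in $\e$, hence by $\mathcal N_B[\e]$. Estimate \eqref{fdab} is precisely the monotonicity formula for the mixed energy--Virial functional $\mathcal F_B$ from \cite{MMR1} (the functional is equivalent to $\mathcal N_B[\e]$ up to lower-order corrections, and its time derivative is $\le -c(b^4+\ip{\e_y^2+\e^2}{\varphi_B'})+$ negligible), combined with the rigidity fact that in the (Soliton) regime the coercivity of the Virial quadratic form holds for all time (this is where the three orthogonality conditions, including $\ip{\e}{Q}=0$, and the sign restriction on the (Soliton) branch enter). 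Integrating the differential inequality on $[t_1,t_2]$ gives \eqref{fdab}. So the whole proof of Lemma~\ref{le:2} is: apply the implicit function theorem for (i), then quote \cite[Proposition~2.4 and the monotonicity argument]{MMR1} for (ii).

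The main obstacle---more bookkeeping than genuine difficulty---is checking that the $(s,y)$ variables and orthogonality conditions \eqref{eqforc} used elsewhere in the present paper are \emph{consistent} with the $Q_b$-based $(t,y)$ decomposition of Lemma~\ref{le:2}: the two parametrizations differ (here no modulation at $t=0$, there the full $Q_b$ profile), so one must verify that the uniqueness in each decomposition, plus the smallness $\delta(\alpha_0)$, forces $\mu,z,\eta$ and $\lambda,x,b,\e$ to be related by a $C^1$ change of variables, so that the decay information transfers between them. Concretely, $\eta$ (the object of Proposition~\ref{le:2bis}) equals, up to the rescaling/translation by $\mu^{-1}\lambda$ and $z-x$, the quantity $b\chi_b P+\e$, and then $\mathcal N_B[\eta]\lesssim \mathcal N_B[\e]+b^2\lesssim \mathcal N_B[\e]$ by \eqref{bN}; combined with \eqref{fdab} and a now-standard ODE argument on the decay rate of $\mathcal N_B[\e(t)]$ (as in \cite{MMR1}), this yields the $(1+s)^{-7}$ bound in \eqref{eq:eta1}. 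But that transfer and the derivation of Proposition~\ref{le:2bis} belong to the proof of the \emph{proposition}, not of Lemma~\ref{le:2} itself; for the lemma, the only content to supply is the implicit-function-theorem construction and the citations for (ii).
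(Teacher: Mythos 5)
Your bottom line (``cite \cite{MMR1}'') is the same as the paper's, and Step~1 is fine: part~(i) is the standard modulation / implicit-function-theorem statement from \cite{MMR1}, and the paper simply quotes it. But your explanation of where \eqref{bN} comes from is wrong, and \eqref{bN} is precisely the one place in the lemma where the (Soliton) hypothesis actually does work.

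You claim \eqref{bN} follows from ``the algebraic relation between $b$ and $\e$ coming from the equation plus the orthogonality $\ip{\e}{Q}=0$.'' That mechanism cannot give $|b(t)|\lesssim\mathcal N_B[\e(t)]$. The mass identity with $\ip{\e}{Q}=0$ relates $b$ to the conserved mass excess $\int u_0^2-\int Q^2$ and to $\|\e\|_{L^2}^2$, neither of which is dominated by the localized, time-decaying quantity $\mathcal N_B[\e]$. And if \eqref{bN} were an algebraic consequence of the decomposition and orthogonality, it would hold in the (Blow up) and (Exit) cases as well; there, after the separation time, $|b(t)|$ in fact dominates $\mathcal N_B[\e(t)]^{1/2}$ and \eqref{bN} fails. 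The paper does not derive \eqref{bN}: it reads it off as the \emph{characterization} of the (Soliton) regime via the definition of the separation time $t_1^*$ in \cite[Proposition~4.1]{MMR1}, remarking only that the norm $\mathcal N_1$ used there satisfies $\mathcal N_1\lesssim\mathcal N_B$ so the bound transfers to $\mathcal N_B$.

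A second, smaller point: for \eqref{fdab} you propose to ``integrate the differential inequality'' of a (Soliton)-specific coercive Virial. The paper instead invokes the \emph{general} estimate \eqref{estfondamentale} from \cite[Lemma~4.3]{MMR1}, valid in every regime, which carries the boundary errors $|b^3(t_1)|+|b^3(t_2)|$; it then uses \eqref{bN} together with the smallness $\mathcal N_B[\e]\lesssim\delta(\alpha_0)$ to absorb those errors and obtain \eqref{fdab}. So the logical order is the reverse of what you suggest: \eqref{bN} must be established first (as the (Soliton) characterization from \cite{MMR1}), and \eqref{fdab} follows from \eqref{bN} combined with the general monotonicity estimate, not from a regime-specific form of the Virial inequality.
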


The fact that estimate \eqref{bN} holds for all $t\geq 0$ characterizes the (Soliton) case; see the definition of the separation time $t_1^*$ in Proposition 4.1 of \cite{MMR1} (the norm $\mathcal N_1$ used in \cite{MMR1} is not quite the same as $\mathcal N_B$, but since
$\mathcal{N}_1\lesssim \mathcal{N}_B$, \eqref{bN} indeed holds in the soliton case).
Moreover,  \eqref{fdab} is a consequence of \eqref{bN} together with the following general estimate proved in Lemma~4.3 of \cite{MMR1}
 \be
\label{estfondamentale}
\mathcal N_B[\e(t_2)] +\int_{t_1}^{t_2} \left(b^4(t) + \ip{(\e_y)^2+ \e^2}{\varphi_B'} \right) dt  \le C(B)\{\mathcal N_B[\e(t_1)] + |b^3(t_2)|+|b^3(t_1)|\}.
\ee
Note that   estimates in \cite{MMR1} are stated in the rescaled time variable $s$ defined by ${s}(t)=   \int_{0}^{t} \frac {dt'}{{\lambda}^3(t')}$. Using \eqref{controle}, it is clear that
$\frac t 2\leq s(t)\leq 2t$, and thus, for $\alpha_0$ small enough, all estimates can   be written in the original time variable $t$.

\medskip

Now, we translate \eqref{controle} to the new variables $(\eta,\mu,z)$ 
to prove \eqref{eq:eta1} and \eqref{eq:eta2}. 
For a solution $u$ in the (Soliton) regime, we have, at each $s>0$, 
$$
  u= (Q  + \eta)_{(\mu,z)}=
   \frac 1{\lambda^{\frac 12}}(Q+b P\chi_{b}+\varepsilon)
\left(\frac{.-x}{\lambda} \right)
   $$
with orthogonality conditions  \eqref{ortho1} and estimate \eqref{controle}.
In particular, putting $\ti\mu=\mu/\la$ and $\ti z:=(z-x)/\la$, 
\begin{equation}\label{etaeq}
 \eta(y) =
   \ti\mu^{\frac 12}(Q+b P\chi_{b}+\varepsilon)(\ti\mu y+\ti z) - Q(y)  .
\end{equation}
Using the orthogonality forcing \eqref{eqforc} on $\eta(t)$, we obtain, 
\begin{align*}
 O(\al_0)e^{-s} &=\ip{\vec\Lambda Q}{\eta} = M(0)\mat{\ti\mu-1 \\ \ti z} + O((\ti\mu-1)^2+\ti z^2+|b|+\|\varepsilon\|_2)
\end{align*}
and thus, using \eqref{controle}, we obtain 
\begin{equation}\label{compare1}
 \|\eta\|_{L^\I_sH^1_y} + |1-\mu|+|x-z| \lec \delta(\al_0).
\end{equation}
Injecting this information in \eqref{etaeq}, and using \eqref{fdab}, we obtain 
\EQ{ \label{eta loc dec}
 \calN_{\frac B2}[\eta(s_2)] + \int_{s_1}^{s_2}\left(\|\eta_y(s)\|_{L^2_{loc}}^2 + \|\eta(s)\|_{L^2_{loc}}^2 \right)ds \le C(B)\calN_B[\eta(s_1)],}
where the factor $1/2$ in $\calN_{B/2}$ comes from the rescaling. 

\medskip 

{\bf Step 2} Refined weighted estimates in the (Soliton) regime.
Now, we prove \eqref{eq:eta1} and \eqref{eq:eta2}.  
For $k=0,1,\dots,10$ and $B>100$, let 
\EQ{
 w(s,y):=\phi^{(k)}\left(\frac{\mu(s)y}{2B}\right),}
where $\phi$ is defined in \eqref{defphi}. 
Then by \eqref{eqetaauto} we have the weighted $L^2$ identity
\EQ{ \label{wL2eta}
 &\p_s\ip{\eta^2}{w}+\ip{3(\eta_y)^2+\eta^2}{w'}
  =\ip{2F}{\eta w}+\ip{w'''-\ze(\eta)w'-(5Q^4)'w+5Q^4w'}{\eta^2},}
where $F:=\vec\Om(\eta)\cdot\vec\p Q-\p_yR(\eta)$. 
The terms on the right are estimated by 
\EQ{ 
 & |\ip{\vec\Om(\eta)\cdot\vec\p Q}{\eta w}| \lec |\vec\Om(\eta)|\|\eta\|_{L^2_{loc}} \lec \|\eta\|_{L^2_{loc}}^2,
 \\& |\ip{\p_yR(\eta)}{\eta w}| 
  \lec \ip{\eta^2}{e^{\frac{-|y|}{2}}w+\eta^4w'} 
  \lec \|\eta\|_{L^2_{loc}}^2 + \al_0^4\ip{\eta^2}{w'},
 \\& |\ip{w'''-(5Q^4)'w+5Q^4w'}{\eta^2}| \le C\|\eta\|_{L^2_{loc}}^2+\frac15\ip{\eta^2}{w'},
 \\& |\ip{\ze(\eta)w'}{\eta^2}| \le |\ze(\eta)| \ip{\eta^2}{w'} \lec \al_0\ip{\eta^2}{w'},}
where $|w'''|\le w'/5$ was used on the third line. Hence 
\EQ{ \label{wL2etaest}
 \p_s\ip{\eta^2}{w} + \frac12\ip{(\eta_y)^2+\eta^2}{w'} \le C\|\eta\|_{L^2_{loc}}^2.}
Integrating it in $s$ and using \eqref{eta loc dec}, we obtain for $0\le s_1\le s_2$
\EQ{
 \ip{\eta(s_2)^2}{w} + \frac12\int_{s_1}^{s_2} \ip{(\eta_y)^2+\eta^2}{w'} ds
  \le \ip{\eta(s_1)^2}{w} + C(B) \calN_B[\eta(s_1)]. }
Choosing $s_1=0$ with $k=0$ leads to \eqref{eq:eta2}. 
Similarly, putting for $k=0,1,\dots,7$ 
\EQ{
 M_{k,B}(s):=\ip{\eta^2}{\phi^{(k)}(\mu(s)y/(2B))}+\calN_B[\eta],}
and using $\psi_B+\varphi_B\lec w'$ and \eqref{eta loc dec}, 
\EQ{
 M_{k,\frac B2}(s_2) + \frac{1}{2}\int_{s_1}^{s_2}M_{k+1,B}(s)ds \le C(B)M_{k,B}(s_1).}
Iterating this estimate, we obtain 
\EQ{
 \frac{s^k}{k!}M_{k+1,\frac B2}(s) = \int_0^s \frac{(s')^{k-1}}{(k-1)!}M_{k+1,\frac B2}(s)ds'
 &\le \int_0^\I \frac{(s')^{k-1}}{(k-1)!}CM_{k+1,B}(s')ds'
 \\& = C(\calI^*)^k M_{k+1,B}(0) \le C(2C)^k M_{1,B}(0),}
where $C=C(B)$ and $\calI^*$ is the operator defined by $\calI^*f(s)=\int_s^\I f(s')ds'$. In particular, 
\EQ{
 \calN_B[\eta(s)]+|\vec\Om(\eta)|^2 &\lec \min(M_{1,B}(s),M_{8,B}(s)) \lec (1+s)^{-7}M_{1,2B}(0)
 \\& \lec (1+s)^{-7}\|\eta(0)\|_{H^1}^{\frac 15}\|\eta(0)\|_{L^2\Lw{0}{5}}^{\frac 95} \lec (1+s)^{-7}\delta(\al_0).}
Thus we obtain \eqref{eq:eta1} as well as \eqref{eq:eta2}. 
The smoothing estimates (iii) follow immediately from Lemma \ref{le:regu}, because $u=(Q+\eta)_{(\mu,z)}$ is a global solution of \eqref{kdv} satisfying the initial condition of the lemma and $(\mu,z)$ is given by \eqref{eta2muz}. 
\end{proof}

\textbf{Step 3} Scattering on the background of a soliton.
Now, we prove the scattering result \eqref{scatprop}, using the $L^2$ Cauchy theory (and scattering theory for small data) of \cite{KPV, KPV2} and the information obtained in \eqref{eq:eta1}-\eqref{eq:eta2} to prove that the remainder scatters in $L^2_x$ as $t\to\I$. This section is inspired by (and is simpler than) \cite{Tjde} (see also \cite{KMapde}).

First we go back to the original space-time $(t,x)$ by putting $v(t,x):=\eta_{(\mu,z)}(s,y)$. In the following, we abbreviate $\frkQ:=Q_{(\mu(t),z(t))}$. Then we have $u=\frkQ+v$ and 
\EQ{
 & v_t=-v_{xxx}-\p_x(v^5+F)+G, 
 \\& F:=(\frkQ+v)^5-\frkQ^5-v^5, \quad G:=\mu^{-2}\vec\Om(\eta)\cdot\vec\p\frkQ.} 
We rely on the following space-time estimates in \cite{KPV} and \cite{KPV2} on the free propagator $U(t):=e^{-t\p_x^3}$ : 
\EQ{
 & \|U(t)f\|_{L^\I_tL^2_x\cap L^5_xL^{10}_t} \lec \|f\|_{L^2_x},
 \\& \left\|\int_0^t U(t-s)f(s)ds\right\|_{L^\I_tL^2_x \cap L^5_xL^{10}_t} \lec \|f\|_{L^{\frac 54}_xL^{\frac{10}9}_t},
 \\& \left\|\int_0^t \p_xU(t-s)f(s)ds\right\|_{L^\I_tL^2_x \cap L^5_xL^{10}_t} \lec \|f\|_{L^1_xL^2_t}.}
Applying them to the Duhamel formula of $v$, we obtain 
\EQ{ \label{v stri}
 & \|v\|_{L^\I_tL^2_x \cap L^5_xL^{10}_t}
 \lec \|v(0)\|_2 + \|F\|_{L^1_xL^2_t} + \|v^5\|_{L^1_xL^2_t}
  + \|G\|_{L^{\frac 54}_xL^{\frac{10}9}_t},
 \\& \|v_x\|_{L^\I_tL^2_x \cap L^5_xL^{10}_t}
 \lec \|v_x(0)\|_2 + \|F_x\|_{L^1_xL^2_t} + \|v^4v_x\|_{L^1_xL^2_t}
  + \|G\|_{L^1_xL^2_t},}
where the quintic term is bounded by H\"older
\EQ{
 &\|v^5\|_{L^1_xL^2_t} = \|v\|_{L^5_xL^{10}_t}^5, 
 \quad \|v^4v_x\|_{L^1_xL^2_t} \le \|v\|_{L^5_xL^{10}_t}^4\|v_x\|_{L^5_xL^{10}_t}.}
Since $\|v(0)\|_{H^1}\ll 1$, the following bound is sufficient: 
\EQ{
 \|F\|_{W^{1,1}_xL^2_t} + \|G\|_{L^{\frac 54}_xL^{\frac{10}9}_t \cap L^1_xL^2_t} \le \delta(\al_0).}
This follows from the refined estimate \eqref{eq:eta1}. Indeed, putting $f:=|v_x|+|v|$
\EQ{
 \|F\|_{W^{1,1}_xL^2_t} \lec \|(1+|x|)(|\frkQ_x|+|\frkQ|) f\|_{L^2_xL^2_t}
 &\lec \|(1+|z|)e^{\frac{-|x-z|}2}f\|_{L^2_{t,x}}
 \\&\lec \|f\|_{L^\I_{0<t<1}L^2_x} + \|te^{\frac{-|x-z|}2}f\|_{L^2_{t,x}}, }
and the last term is bounded by \eqref{eq:eta1}
\EQ{
 \|te^{\frac{-|x-z|}2}(|v_x|+|v|)\|_{L^2_{t,x}}^2 \le C(B)\int_0^\I s^2\calN_B[\eta(s)]ds \lec \delta(\al_0).}
Similarly we have for any $p,q\in[1,\I]$,
\EQ{ 
 \|G\|_{L^q_xL^p_t} 
 &\lec \|(1+|x|^{2})G\|_{L^p_tL^\I_x} 
 \\&\lec \|\vec\Om(\eta)(1+|z|^{2})e^{\frac{-|x-z|}2}\|_{L^p_tL^\I_x}\lec \delta(\al_0)\|(1+t)^{-\frac 72+2}\|_{L^p_t} \lec \delta(\al_0).}
Plugging these estimates into \eqref{v stri}, we obtain a priori global space-time bound 
\EQ{
 \|v\|_{L^\I_t H^1_x \cap W^{1,5}_x L^{10}_t} \lec \|v(0)\|_{H^1_x} + \delta(\al_0),}
as well as the scattering of $v$, namely the strong convergence in $H^1$ of $e^{t\p_x^3}v(t)$ as $t\to\I$ to some $w_\infty\in H^1$. 
From \eqref{eq:eta1}, it follows easily that there exist  $\lambda_\infty>0$ and $x_\infty$ so that \eqref{parprop} holds.
This implies the scattering statement \eqref{scatprop}. 

\section{Construction of the Lipschitz manifold}
This section is devoted to the proof of the following Proposition \ref{th:s4}.

\begin{proposition}[Construction of a Lipschitz graph]\label{th:s4}
For any $\ga\in\calA_0^\perp$, there exists a unique $A(\ga)\in (-C\alpha_0,C\alpha_0)$ (for some absolute constant $C>0$) with the following properties. Let $u$ be the solution of \eqref{kdv} with initial data $u(0)=(1+a_0)Q+\ga$. 
\begin{itemize}
 \item $u$ is in the {\rm (Blow up)} regime if $A(\ga)<a_0\lec \al_0$;
 \item $u$ is in the {\rm (Soliton)} regime if $a_0=A(\ga)$;
 \item $u$ is in the {\rm (Exit)} regime if $-\al_0\lec a_0<A(\ga)$;
 \item Lipschitz regularity: For $B>100$ large enough, and $\al_0>$ small enough, there exists $C(B)>0$ such that for any $\ga_j\in \calA_0^\perp$ ($j=1,2$),  
 \begin{equation}
  |A(\ga_1) -A(\ga_2)|
  \le C(B)\left\{\|\ga_1\|_{H^1\Lw{\frac{1}{B}}{0}}+\|\ga_2\|_{H^1\Lw{\frac{1}{B}}{0}}\right\}\|\ga_1-\ga_2\|_{H^1\Lw{\frac{1}{B}}{0}}.
 \end{equation}
\end{itemize}
\end{proposition}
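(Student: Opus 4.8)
The plan is to construct $A$ directly, combining the trichotomy of Theorem~\ref{th:3}, the sharp estimates of Proposition~\ref{le:2bis}, and the difference estimates of Proposition~\ref{pr:s4}. Fix $\ga\in\calA_0^\perp$ and, for $a\in I:=(-C\alpha_0,C\alpha_0)$, put $u_0^a:=(1+a)Q+\ga$; each $u_0^a$ lies in a neighborhood of $Q$ of the type to which Theorem~\ref{th:3} applies, so $I$ is partitioned as $I=\calB(\ga)\sqcup\calS(\ga)\sqcup\calE(\ga)$ according to the {\rm(Blow up)}, {\rm(Soliton)}, {\rm(Exit)} regime of the solution, with $\calB(\ga)$ and $\calE(\ga)$ open. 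For existence of $A(\ga)$, a short computation using $(Q,\ga)=0$ and $E(Q)=0$ shows that $a\mapsto E(u_0^a)$ is strictly decreasing on $I$, with $\partial_a E(u_0^a)\le -c_\ast<0$, and that $|E(u_0^0)|=|E(Q+\ga)|\lesssim\|\ga\|_{H^1}^2$ (the orthogonality $(Q,\ga)=0$ kills the linear term of $E(Q+\ga)$ around $Q$); hence $E(u_0^a)<0$ for $a\gtrsim\|\ga\|_{H^1}^2$, and since then $u_0^a\ne Q$ up to symmetries, the negative-energy blow up criterion of \cite{MMR1} gives $(c_1\|\ga\|_{H^1}^2,C\alpha_0)\subset\calB(\ga)$. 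Likewise $\|u_0^a\|_{L^2}^2=(1+a)^2\|Q\|_{L^2}^2+\|\ga\|_{L^2}^2<\|Q\|_{L^2}^2$ for $a<-c_2\|\ga\|_{L^2}^2$; such solutions are global and bounded, hence not in {\rm(Blow up)}, and not in {\rm(Soliton)} either, since by \eqref{scatprop} and the dispersive decay of $e^{-t\p_x^3}w_\infty$ a {\rm(Soliton)} solution satisfies $\|u_0\|_{L^2}\ge\|Q\|_{L^2}$; thus $(-C\alpha_0,-c_2\|\ga\|_{L^2}^2)\subset\calE(\ga)$. In particular $\calB(\ga),\calE(\ga)\ne\emptyset$, so by connectedness of $I$ the closed set $\calS(\ga)=I\setminus(\calB(\ga)\cup\calE(\ga))$ is nonempty.

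Next I would prove uniqueness together with the ordering, using Proposition~\ref{pr:s4}. If $a,a'\in\calS(\ga)$, decompose the two {\rm(Soliton)} solutions via \eqref{decSk} to get $\eta,\eta'$ and set $\tilde\eta=\eta-\eta'$; by \eqref{eq:eta1} we have $\|\eta(s)\|_{L^2_{loc}},\|\eta'(s)\|_{L^2_{loc}}\to0$, hence $\liminf_{s\to\infty}(\tilde\eta(s),Q)=0$, so \eqref{unSk} at $s=0$ forces $|a-a'|\,\|Q\|_{L^2}^2=|(\tilde\eta(0),Q)|\lesssim\delta(\alpha_0)\|\ga-\ga\|_{H^1}=0$. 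Thus $\calS(\ga)=\{A(\ga)\}$ is a single point, with $|A(\ga)|\lesssim\|\ga\|_{H^1}^2\le\alpha_0^2$ by the previous step. Finally $I\setminus\{A(\ga)\}=(-C\alpha_0,A(\ga))\sqcup(A(\ga),C\alpha_0)$, and each of these connected intervals, being disjoint from $\calS(\ga)$, lies entirely in the single open set among $\calB(\ga),\calE(\ga)$ that it meets; since the left one meets $\calE(\ga)$ and the right one meets $\calB(\ga)$, we obtain $\{a<A(\ga)\}\subset\calE(\ga)$ and $\{a>A(\ga)\}\subset\calB(\ga)$, i.e.\ the claimed trichotomy.

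It remains to prove the tame estimate. Take $\ga_1,\ga_2\in\calA_0^\perp$, set $a_j=A(\ga_j)$, let $\eta_j$ be the decompositions of the corresponding {\rm(Soliton)} solutions and $\tilde\eta=\eta_1-\eta_2$, which solves \eqref{TetaSk} and satisfies $\liminf_{s\to\infty}(\tilde\eta(s),Q)=0$. Since $(\tilde\eta(0),Q)=(a_1-a_2)\|Q\|_{L^2}^2$, it suffices to integrate $\frac{d}{ds}(\tilde\eta(s),Q)$ over $[0,\infty)$. The structural point, already behind \eqref{unSk}, is that $LQ'=0$ and $(Q,\Lambda Q)=(Q,Q')=0$ annihilate every term in $\frac{d}{ds}(\tilde\eta,Q)$ that is linear in $\tilde\eta$, leaving an exponentially localized expression that is bilinear in $(\eta_1,\eta_2)$ and $\tilde\eta$; bounding the $\eta_j$-factors by \eqref{eq:eta1} yields \eqref{asSk}. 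Combining this bilinear form with the energy--Virial bound $\int_0^\infty\|\tilde\eta(s)\|_{L^2_{loc}}^2\,ds\lesssim\|\ga_1-\ga_2\|_{H^1}^2$ of \eqref{deuxSk} (in its refined weighted form) and Cauchy--Schwarz in $s$ gives $|A(\ga_1)-A(\ga_2)|\le C(B)\{\|\ga_1\|_{H^1\Lw{\frac1B}{0}}+\|\ga_2\|_{H^1\Lw{\frac1B}{0}}\}\|\ga_1-\ga_2\|_{H^1\Lw{\frac1B}{0}}$. As emphasized in the sketch, \eqref{unSk} and \eqref{deuxSk} have to be proved simultaneously, because the mixed energy--Virial functional that controls \eqref{deuxSk} becomes coercive only after a correction term involving $(\tilde\eta,Q)$ is added.

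The genuinely hard step is therefore Proposition~\ref{pr:s4}: building an energy--Virial functional for the nonautonomous linear equation \eqref{TetaSk} that is coercive modulo the $Q$-direction, running the coupled bootstrap between $\|\tilde\eta\|_{L^2_{loc}}$ and $(\tilde\eta,Q)$, and extracting the polynomial-in-$s$ decay required by the Cauchy--Schwarz arguments above — adapting the monotonicity machinery of \cite{MMR1,MMR2} to the decomposition \eqref{decSk}--\eqref{orthoSk}. By comparison, the existence of $A(\ga)$, the localization $|A(\ga)|\lesssim\|\ga\|_{H^1}^2$, and the ordering are soft consequences of Theorem~\ref{th:3} and the connectedness of $I$.
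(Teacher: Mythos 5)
Your proposal follows the paper's proof of Proposition \ref{th:s4} essentially verbatim: existence via the subcritical-mass and negative-energy criteria together with openness of the (Blow up) and (Exit) sets, uniqueness and the Lipschitz bound by applying Proposition \ref{pr:s4} to $\tilde\eta=\eta_1-\eta_2$ (with $P_\perp\tilde\eta(0)=0$ for uniqueness), and the ordering by connectedness of the two remaining intervals. The one substantive piece you defer --- the energy--Virial machinery behind Proposition \ref{pr:s4} --- is exactly what the paper isolates and proves separately in Section 3.2, and you correctly identify both its role and the coupling between the $(\tilde\eta,Q)$ estimate and the coercivity of the functional; your added justification that a subcritical-mass solution cannot be in the (Soliton) regime (via \eqref{scatprop}) is a detail the paper leaves implicit.
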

The above Lipschitz bound together with the trichotomy implies $|A(\ga)|\lec\|\ga\|_{H^1}^2$. 

\subsection{Existence}
For each $\ga\in \calA_0^\perp$,  
we prove the existence of at least one value of $a_0=A(\ga)=O(\al_0^2)$ 
such that the solution of \eqref{kdv} with initial data $u(0) = (1+a_0)Q + \ga$, is in the {\rm (Soliton)} regime. We use the trichotomy stated in Theorem \ref{th:3} (see \cite{MMR1}).

On the one hand, by the orthogonality we have
\begin{align*}
 \|u(0)\|_{L^2}^2 = (1+a_0)^2\|Q\|_{L^2}^2 + \|\ga\|_{L^2}^2.
\end{align*}
Thus, for $-\al_0\lec a_0 \ll -\|\ga\|_{L^2}^2$, we have $\|u(0)\|_{L^2}<\|Q\|_{L^2}$ and so, the corresponding solution $u$ is   global satisfying the (Exit) regime. 

On the other hand, we have similarly,
\EQ{
\label{energbound}
 E(u(0))&=E(Q)+\ip{E'(Q)}{a_0Q+\ga} + O(\|a_0Q+\ga\|_{H^1}^2)
 \\&=-a_0\|Q\|_{L^2}^2 + O(|a_0|^2+\|\ga\|_{H^1}^2).}
Thus, for $\|\ga\|_{H^1}^2\ll a_0 \lec \al_0$, we have $E(u(0))<0$ and so, by Theorem~1.1~(i) in \cite{MMR1}, the solution $u$ blows up in finite time.

Since the cases (Blowup) and (Exit) are open in $H^1$ (see \cite{MMR1}), there exists at least one value of $a_0=O(\al_0^2)$, such that the solution of \eqref{kdv} with initial data $(1+a_0)Q+\ga$ satisfies the (Soliton) regime.
 

\subsection{Uniqueness and Lipschitz regularity}

In this section, we prove the uniqueness and the Lipschitz regularity of the function $A$. Let $\eta_1,\eta_2$ be two solutions of \eqref{eqetaauto} in the (Soliton) regime, and consider the difference
\begin{equation}\label{tti2}
\ti \eta(s,y):=\eta_1(s,y)-\eta_2(s,y).
\end{equation}
Then it satisfies  
\begin{align}
 \p_s \ti\eta = \p_y[(L - \ti R(\eta_1,\eta_2))\ti\eta] + \ip{\ti \Omega(\eta_1,\eta_2)}{\ti\eta}\cdot \vec\p(Q+ \eta_1)
  + \vec\Om(\eta_2)\cdot\vec\p \ti \eta, \label{eqe}
\end{align}
where $\ti R$ and $\ti\Omega$ are defined such that we have 
\EQ{
 \ti R(\eta_1,\eta_2)\ti\eta=R(\eta_1)-R(\eta_2), \quad \ip{\ti \Omega(\eta_1,\eta_2)}{\ti\eta}:= \vec\Om(\eta_1)-\vec\Om(\eta_2).} 
Explicitly, they are given by 
\EQ{ \label{deftiR}
 \ti R(\eta_1,\eta_2)  &:= \sum_{j=1}^4\sum_{k=0}^{j} \frac{5!Q^{4-j}\eta_1^{j-k}\eta_2^{k}}{(j+1)!(4-j)!},}
\EQ{ \label{deftiOm}
 \\ \ti \Omega(\eta_1,\eta_2) &:= -M(\eta_2)^{-1}(1-L\p_y+\ti R(\eta_1,\eta_2)\p_y)\vec\La Q
 \\ &\quad -\int_1^2 M(\eta_\theta)^{-1}M'M(\eta_\theta)^{-1}M(\eta_1)\vec\Om(\eta_1)d\theta, \quad \eta_\theta:=(2-\theta)\eta_1+(\theta-1)\eta_2,}
 where $M$ and $M'$ are defined in \eqref{defM}.

Denote the projection orthogonal to $Q$ by 
\EQ{
 P_\perp f := f - \|Q\|_{L^2}^{-2}\ip{Q}{f}.}
Now we are ready to state the core estimates in this paper. 
\begin{proposition}\label{pr:s4}
Let $\eta_1,\eta_2$ be two solutions of \eqref{eqetaauto} in the (Soliton) regime. Let $\ti\eta$ be a solution of \eqref{eqe} in the same function spaces as $\eta$ in Lemma \ref{le:2bis}(iii), satisfying 
\EQ{ \label{tietaforc}
 (\p_s+1)\ip{\ti\eta}{\vec\La Q}=0,}
\EQ{ \label{tiadec}
 \liminf_{s\to\I}|\ip{\ti\eta}{Q}|=0.}
Then for $B>100$ large enough, and $\al_0>0$ small enough, there exists $C(B)>0$ such that 
\EQ{ \label{eq:s4}
 |\ip{Q}{\ti\eta(s)}| &\le \begin{cases} C(B)\delta(\al_0)(1+s)^{\frac{-5}2}\|P_\perp\ti\eta(0)\|_{H^1_y\Lw{\frac{1}{B}}{0}},\\ 
  C(B)\left\{\|\eta_1(0)\|_{H^1_y\Lw{\frac{1}{B}}{0}}+\|\eta_2(0)\|_{H^1_y\Lw{\frac{1}{B}}{0}}\right\}\|P_\perp\ti\eta(0)\|_{H^1_y\Lw{\frac{1}{B}}{0}}, \end{cases}}
for all $s\geq 0$, and
\begin{align}
& \|\ti\eta\|_{L^\I_sH^1_y\Lw{\frac{1}{B}}{0} \cap L^2_sH^1_y\Lw{\frac{1}{B}}{\frac{-3}{4}}} \le C(B)\|P_\perp\ti\eta(0)\|_{H^1_y\Lw{\frac{1}{2B}}{0}}. \label{eq:s4tri}
\end{align}
\end{proposition}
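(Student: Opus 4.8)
The plan is to establish \eqref{eq:s4} and \eqref{eq:s4tri} together, by coupling a scalar differential inequality for the component $\ip{\ti\eta}{Q}$ with a mixed energy-Virial estimate for the whole profile $\ti\eta$, in the spirit of \cite{MMR1,MMR2}. The two estimates will feed into each other — the energy-Virial estimate needs a bound on $\ip{\ti\eta}{Q}$ to compensate the lack of orthogonality of $\ti\eta$ to $Q$, while that bound is itself obtained from the energy-Virial estimate — so I would run the whole argument as a bootstrap on $[0,S]$ for arbitrary $S$, uniformly in $S$, and then let $S\to\I$. In the cases that are actually needed, $\ti\eta=\eta_1-\eta_2$ with $\eta_1,\eta_2$ in the (Soliton) regime (or weak limits thereof), Proposition~\ref{le:2bis}(ii) together with the triangle inequality already gives $\int_0^\I\|\ti\eta\|_{L^2_{loc}}^2\,ds\lec\delta(\al_0)<\I$, so the real point is to replace this crude bound by the sharp one in terms of $\|P_\perp\ti\eta(0)\|$.

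First I would treat the $Q$-component. Pairing \eqref{eqe} with $Q$ and using $LQ'=0$ (hence $\ip{L\ti\eta}{Q'}=\ip{\ti\eta}{LQ'}=0$) together with $\ip{Q}{\La Q}=\ip{Q}{Q'}=\ip{Q}{y\La Q}=0$, all the ``main'' terms cancel, and $\frac{d}{ds}\ip{\ti\eta}{Q}$ reduces to $\ip{\ti R(\eta_1,\eta_2)\ti\eta}{Q'}$ plus terms carrying a factor $\vec\Om(\eta_2)$, $\vec\Om(\eta_1)-\vec\Om(\eta_2)$, or one of the exponentially small quantities $\ip{\eta_1}{\La Q}=e^{-s}\ip{\eta_1(0)}{\La Q}$ and $\ip{\ti\eta}{\vec\La Q}=e^{-s}\ip{P_\perp\ti\eta(0)}{\vec\La Q}$ (the latter by \eqref{eqforc}, \eqref{tietaforc}, where the cancellations above ensure no dependence on the $Q$-component of $\ti\eta(0)$). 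By the explicit formulas \eqref{deftiR}, \eqref{deftiOm} these surviving terms are localized and at least linear in $(\eta_1,\eta_2)$, so with the decay $\|\eta_j(s)\|_{L^2_{loc}}+|\vec\Om(\eta_j)|\lec\delta(\al_0)^{1/2}(1+s)^{-7/2}$ from Proposition~\ref{le:2bis}(ii) and $\|\ga_j\|\le\al_0\le\delta(\al_0)$, I expect
\EQ{
 \left|\frac{d}{ds}\ip{\ti\eta}{Q}\right| \lec \delta(\al_0)(1+s)^{-7/2}\bigl(\|\ti\eta(s)\|_{L^2_{loc}} + e^{-s}\|P_\perp\ti\eta(0)\|_{H^1}\bigr),
}
together with the same bound in which $\|\eta_1\|_{L^2_{loc}}+\|\eta_2\|_{L^2_{loc}}$, resp. $\|\ga_1\|+\|\ga_2\|$, is kept instead of $\delta(\al_0)(1+s)^{-7/2}$. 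Integrating on $[s,\I)$, using \eqref{tiadec} to pass to the limit, and Cauchy--Schwarz in $s$,
\EQ{
 |\ip{\ti\eta(s)}{Q}| \lec \delta(\al_0)(1+s)^{-3}\Bigl(\int_s^\I\|\ti\eta\|_{L^2_{loc}}^2\,ds'\Bigr)^{1/2} + Ce^{-s}\|P_\perp\ti\eta(0)\|_{H^1},
}
and the analogous bound with prefactor $\|\eta_1(0)\|_{H^1\Lw{\frac1B}{0}}+\|\eta_2(0)\|_{H^1\Lw{\frac1B}{0}}$, once \eqref{eq:s4tri} is available for $\eta_j$ alone (which is the Proposition applied with $\eta_2=0$, $\ti\eta=\eta_1$).

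Next, the core of the proof: the mixed energy-Virial estimate giving \eqref{eq:s4tri}. For the \emph{linear} equation \eqref{eqe} I would build a functional $\mathcal F_B(\ti\eta)$ modelled on the one of \cite{MMR1,MMR2} — of the form $\ip{\mathcal L_B\ti\eta}{\ti\eta}$ plus a Virial correction, with exponential weight on the left and bounded weight on the right — and prove: (a) \emph{coercivity}, $\mathcal F_B(\ti\eta)\gtrsim\|\ti\eta\|_{H^1\Lw{\frac1B}{0}}^2$ on $\{Q,\La Q,y\La Q\}^\perp$; since $\ip{\ti\eta}{\La Q}$ and $\ip{\ti\eta}{y\La Q}$ are $O(e^{-s}\|P_\perp\ti\eta(0)\|)$ by \eqref{tietaforc} and $\ip{\ti\eta}{Q}$ is controlled by the previous step, this becomes $\mathcal F_B(\ti\eta)\gtrsim\|\ti\eta\|_{H^1\Lw{\frac1B}{0}}^2-C|\ip{\ti\eta}{Q}|^2-Ce^{-2s}\|P_\perp\ti\eta(0)\|^2$; and (b) \emph{monotonicity}, $\frac{d}{ds}\mathcal F_B(\ti\eta)\le-c\|\ti\eta\|_{H^1\Lw{\frac1B}{\frac{-3}{4}}}^2+C\mathcal E(s)$, where the error $\mathcal E(s)$ collects the contributions of the $(\eta_1,\eta_2)$-dependent coefficients $\ti R(\eta_1,\eta_2)$, $\vec\Om(\eta_2)$, $\vec\Om(\eta_1)-\vec\Om(\eta_2)$ and of the $Q$-direction, and by Proposition~\ref{le:2bis}(ii) is bounded by $\delta(\al_0)(1+s)^{-7/2}$ times weighted norms of $\ti\eta$ plus $C|\ip{\ti\eta}{Q}|^2$. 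Integrating (b) on $[0,S]$, inserting (a), and absorbing the $\delta(\al_0)$-small terms into the left-hand side, I would obtain that $\sup_{s\le S}\|\ti\eta(s)\|_{H^1\Lw{\frac1B}{0}}^2+\int_0^S\|\ti\eta\|_{H^1\Lw{\frac1B}{\frac{-3}{4}}}^2\,ds$ is bounded by $\|P_\perp\ti\eta(0)\|_{H^1\Lw{\frac1B}{0}}^2+|\ip{\ti\eta(0)}{Q}|^2+\sup_{s\le S}|\ip{\ti\eta}{Q}|^2+\int_0^S|\ip{\ti\eta}{Q}|^2\,ds$.

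Finally I would close the loop. Set $X:=\int_0^\I\|\ti\eta\|_{L^2_{loc}}^2\,ds$, finite. By the first step, $|\ip{\ti\eta(s)}{Q}|^2\lec\delta(\al_0)^2(1+s)^{-6}X+Ce^{-2s}\|P_\perp\ti\eta(0)\|^2$, so all the $\ip{\ti\eta}{Q}$-terms on the right of the energy-Virial inequality are $\lec\delta(\al_0)^2X+\|P_\perp\ti\eta(0)\|^2$; since also $X\le\int_0^\I\|\ti\eta\|_{H^1\Lw{\frac1B}{\frac{-3}{4}}}^2\,ds$, letting $S\to\I$ gives $X\lec\|P_\perp\ti\eta(0)\|^2+\delta(\al_0)^2X$, hence $X\lec\|P_\perp\ti\eta(0)\|^2$ for $\al_0$ small. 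Feeding this back yields \eqref{eq:s4tri} (after a fixed rescaling of the parameter $B$, which also bridges the $\Lw{\frac1B}{0}$ vs $\Lw{\frac1{2B}}{0}$ gap) and then \eqref{eq:s4}, both lines — the first line in fact with the stronger decay $(1+s)^{-3}\le(1+s)^{-5/2}$. The hard part will be step (b): designing the energy-Virial functional for the linear equation \eqref{eqe} and proving its monotonicity with every error term — from the difference structure \eqref{deftiR} of the nonlinearity and from the modulation terms \eqref{deftiOm} — absorbed using exactly the $(1+s)^{-7}$ decay of Proposition~\ref{le:2bis}(ii); and, in contrast with \cite{MMR1} where the profile $Q_b$ absorbed the $L^2$-critical degeneracy, here $\ti\eta$ is not orthogonal to $Q$, so coercivity must be coupled to the bootstrapped control of $\ip{\ti\eta}{Q}$.
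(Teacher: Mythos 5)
Your plan is essentially the paper's own proof: the scalar estimate on $\ip{\ti\eta}{Q}$ exploiting $LQ'=0$ and $\ip{Q}{\La Q}=\ip{Q}{Q'}=0$, integrated backwards from $s=\infty$ via \eqref{tiadec}; a mixed energy--Virial functional coercive and monotone only modulo the $Q$ and $\vec\La Q$ directions; and the mutual absorption of the two estimates (the paper closes via $\|\ti a\|_{L^\infty_s\cap L^2_s}\lesssim\delta(\al_0)\|\ti\eta\|_{L^\infty_sL^2_{loc}}$ fed into Claim \ref{cl:e}, and obtains the second line of \eqref{eq:s4} exactly as you do, by first applying \eqref{eq:s4tri} with $\eta_2=0$). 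The only differences are cosmetic bookkeeping (Cauchy--Schwarz in $s$ versus the sup norm when integrating the ODE, and the explicit $[0,S]$ bootstrap versus direct absorption).
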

Note that $\ti\eta$ is independent of $\eta_1,\eta_2$ in the above proposition. This freedom is needed in order to recycle it later for $C^1$ estimate of the manifold. 
Applying this proposition to $\ti\eta=\eta_1-\eta_2$, 
\eqref{eq:s4} implies the uniqueness of $a=A(\ga)$ in the (Soliton) regime for each $\ga\in\calA_0^\perp$. Then the above existence together with the openness of the other two regimes implies the trichotomy in Proposition \ref{th:s4}. The Lipschitz regularity of $A$ follows from the second estimate in \eqref{eq:s4}. 
The rest of this section is devoted to the proof of Proposition \ref{pr:s4}.

\begin{proof}[Proof of Proposition \ref{pr:s4}] It is similar to the proof in \cite{MMR2} of uniqueness of the minimal mass solution. We set 
\begin{equation}\label{defat}
 \ti a := \|Q\|_{L^2}^{-2}\ip{\ti\eta}{Q}, \quad \eta:=\mat{\eta_1 \\ \eta_2}.
\end{equation}
Then using \eqref{deftiR}, \eqref{deftiOm}, \eqref{eqe} and \eqref{eq:eta1}, we obtain
\EQ{ \label{cl:p1}
 &|\ip{\ti\Om(\eta_1,\eta_2)}{\ti\eta}| \lec \|\ti\eta\|_{L^2_{loc}},  
 \quad |\p_s\ti a| \lec \|\ti\eta\|_{L^2_{loc}} \|\eta\|_{L^2_{loc}} 
  \lec \|\ti\eta\|_{L^2_{loc}}\delta(\al_0)(1+s)^{-\frac 72}.} 
Integrating the latter on $[s,+\infty)$ and using \eqref{tiadec}, 
we obtain, for all $s\geq 0$,
\begin{equation}\label{poura}
|\ti a(s)|\lesssim \delta(\alpha_0) (1+s)^{-\frac 52} \|\ti\eta\|_{L^\I_sL^2_{loc}}.
\end{equation}
The crucial fact for constructing the manifold is that the last factor is controlled by the initial data $\ti\eta(0)$. 
More precisely, we have 
\begin{claim}[Estimates on $\ti \eta$]\label{cl:e}
\begin{align}
&\|\ti\eta\|_{L^\I_s H^1_y\Lw{\frac{1}{B}}{0}}
 + B^{\frac{-1}{2}}\|\ti\eta\|_{L^2_s H^1_y\Lw{\frac{1}{B}}{\frac{-3}{4}}}
 \lec \|\ti \eta(0)\|_{H^1_y\Lw{\frac{1}{2B}}{0}}
 + B^{\frac 12}\|\ti a(s)\|_{L^\I_s\cap L^2_s}.
\label{cl:ee1}
\end{align}
\end{claim}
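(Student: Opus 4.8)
\noindent\textbf{Proof plan for Claim \ref{cl:e}.}
The plan is to run, on the difference equation \eqref{eqe}, the mixed energy--Virial scheme of \cite{MMR1,MMR2}. The genuinely new feature compared to those references is that $\ti\eta$ is \emph{not} orthogonal to $Q$: from \eqref{tietaforc} we only know $\ip{\ti\eta(s)}{\vec\La Q}=e^{-s}\ip{\ti\eta(0)}{\vec\La Q}$, so the scalar $\ti a$ of \eqref{defat} has to be carried through every estimate as an exogenous parameter. This is exactly why the term $B^{1/2}\|\ti a(s)\|_{L^\I_s\cap L^2_s}$ appears on the right of \eqref{cl:ee1}, and why \eqref{cl:ee1} and \eqref{poura} must ultimately be closed together by a bootstrap, just as in the uniqueness part of \cite{MMR2}. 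I would work with a functional $\calF_B(\ti\eta)$ built, as in \cite[Sec.~3]{MMR1} and \cite[Sec.~4]{MMR2}, from a weighted energy $\tfrac12\ip{L\ti\eta}{\ti\eta\psi_B}+\tfrac12\ip{\ti\eta^2}{\varphi_B-\psi_B}$, corrected by the quadratic-in-$\ti\eta$ potential term carrying $\ti R(\eta_1,\eta_2)$ of \eqref{deftiR} and by a localized Virial cross-term of size $O(1/B)$; here $\psi_B,\varphi_B$ are as in \eqref{defphi2}--\eqref{pB}. The two ingredients are then coercivity and monotonicity of $\calF_B$.

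\emph{Coercivity.} Using the standard coercivity of $L$ under the three orthogonality conditions $\ip{\cdot}{\La Q}=\ip{\cdot}{y\La Q}=\ip{\cdot}{Q}=0$ (see \cite{MMR1}), together with the facts that $\ip{\ti\eta}{\La Q}$ and $\ip{\ti\eta}{y\La Q}$ are $O(e^{-s}\|\ti\eta(0)\|_{H^1_y\Lw{1/B}{0}})$ by \eqref{tietaforc}, that $\ip{\ti\eta}{Q}=\|Q\|_{L^2}^2\ti a$, and that $\|\ti R(\eta_1,\eta_2)\|_{L^\I_{loc}}\lec\al_0$ by \eqref{deftiR} and \eqref{eq:eta1}, I expect
\EQ{
 c\,\|\ti\eta\|_{H^1_y\Lw{1/B}{0}}^2 - C(B)\bigl(\ti a^2 + e^{-2s}\|\ti\eta(0)\|_{H^1_y\Lw{1/2B}{0}}^2\bigr) \le \calF_B(\ti\eta) \lec \|\ti\eta\|_{H^1_y\Lw{1/B}{0}}^2 ,
}
for $\al_0$ small enough depending on $B$.

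\emph{Monotonicity.} Differentiating $\calF_B$ along \eqref{eqe}, the leading term is the localized Virial dissipation, bounded above by $-\tfrac{c}{B}\|\ti\eta\|_{H^1_y\Lw{1/B}{-3/4}}^2$. The remaining contributions are: (i) the terms carrying $\ti R$ and $\p_s\ti R$, which are $O(\al_0)$ relative to the dissipation by \eqref{deftiR} and \eqref{eq:eta1}; (ii) the modulation term $\ip{\ti\Om(\eta_1,\eta_2)}{\ti\eta}\cdot\vec\p(Q+\eta_1)$, which is localized and $\lec\|\ti\eta\|_{L^2_{loc}}^2$ by the first bound of \eqref{cl:p1}, hence absorbed by the dissipation for $\al_0$ small; (iii) the term $\vec\Om(\eta_2)\cdot\vec\p\ti\eta$, for which $|\vec\Om(\eta_2)|\lec\delta(\al_0)(1+s)^{-7/2}$ by \eqref{eq:eta1}, so it contributes $\lec\delta(\al_0)(1+s)^{-7/2}\|\ti\eta\|_{H^1_y\Lw{1/B}{0}}^2$; (iv) the terms produced by the degeneracy of $\calF_B$ in the $Q$-direction, which split into terms in $\p_s\ti a$ controlled by the second bound of \eqref{cl:p1} and a genuine source $\lec C(B)\,|\ti a|\,\|\ti\eta\|_{L^2_{loc}}$. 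Collecting all this and applying Young's inequality, I expect
\EQ{
 \tfrac{d}{ds}\calF_B(\ti\eta) + \tfrac{c}{2B}\|\ti\eta\|_{H^1_y\Lw{1/B}{-3/4}}^2 \lec \delta(\al_0)(1+s)^{-7/2}\|\ti\eta\|_{H^1_y\Lw{1/B}{0}}^2 + C(B)\,\ti a^2 .
}

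\emph{Conclusion and main difficulty.} Integrating the last inequality in $s$, using $\calF_B(\ti\eta(0))\lec\|\ti\eta(0)\|_{H^1_y\Lw{1/2B}{0}}^2$ (the relaxation $1/B\to 1/2B$ absorbs the rescaling by $\mu\in[\tfrac12,2]$, exactly as in the proof of Proposition \ref{le:2bis}), combining with the coercivity bound, and removing the $\delta(\al_0)(1+s)^{-7/2}$ factor by a Gr\"onwall/absorption argument (legitimate since $\int_0^\I(1+s)^{-7/2}ds<\I$ and $\al_0$ is small), would yield \eqref{cl:ee1} with the stated powers of $B$. The main obstacle is item (iv): because $\ti\eta$ is not orthogonal to $Q$, $\calF_B$ is degenerate in that direction, and one must check carefully that every contribution of the $Q$-component -- of the nonlinear term, of the modulation term, and of the Virial correction -- enters only through the source $\propto|\ti a|$ with the correct $B$-weight, so that, after feeding \eqref{poura} back in, $B^{1/2}\|\ti a\|_{L^\I_s\cap L^2_s}\lec B^{1/2}\delta(\al_0)\|\ti\eta\|_{L^\I_s H^1_y\Lw{1/B}{0}}$ can finally be absorbed into the left-hand side (possible since $\al_0$ is fixed last, depending on $B$). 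This coupling of the $\ti a$-dynamics with the energy--Virial functional is the same mechanism driving the minimal-mass uniqueness proof in \cite{MMR2}.
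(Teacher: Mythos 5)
Your overall architecture (mixed energy--Virial functional for the difference equation, coercivity up to the $Q$ and $\vec\La Q$ directions, monotonicity with $\ti a$ entering as a source, then integration and absorption with $\al_0$ chosen last depending on $B$) is exactly the paper's, and you correctly identify that the whole point is to carry $\ti a$ as an exogenous parameter and feed \eqref{poura} back in at the end. However, there is a genuine gap in your choice of weights. You propose to build $\calF_B$ from $\psi_B,\varphi_B$ of \eqref{defphi2}--\eqref{pB}, for which $\varphi_B(y)\sim (y/B)^2$ as $y\to+\infty$. With that weight your own two-sided coercivity bound $\calF_B(\ti\eta)\lec\|\ti\eta\|_{H^1_y\Lw{1/B}{0}}^2$ is false, and more importantly the initialization $\calF_B(\ti\eta(0))\lec\|\ti\eta(0)\|_{H^1_y\Lw{1/2B}{0}}^2$ fails: you would only get the claim with $\|\ti\eta(0)\|_{H^1\cap L^2(y_+^2dy)}$ on the right, which is not \eqref{cl:ee1} and is too weak for the Lipschitz bound of Proposition \ref{th:s4} and for the $C^1$ argument, where the difference quotients are only bounded in $H^1$-type norms without polynomial weight. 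The paper's proof hinges on replacing $\varphi_B,\psi_B$ by the modified weights $\vbt=\hat\fy(\mu_2 y/B)$ (bounded on the right, with $\hat\fy'\sim(1+y)^{-3/2}$, which is where the exponent $-3/4$ in the dissipation norm comes from) and $\pbt=\mu_2^{-2}\psi_\si(y/B)$ with $\psi_\si'>0$ everywhere; the precise $\mu_2$-normalization produces the exact cancellation \eqref{anotherremark} in the scaling term $\Om_2\La\ti\eta$, which is what allows one to dispense with the polynomial weight on the right. This is flagged explicitly in the text as the reason for the modified weights, and it is not something your Gr\"onwall step can repair.

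A second, related point you gloss over is item (ii): the modulation term contains $\ip{\ti\Om}{\ti\eta}\cdot\vec\p\eta_1$ with $\vec\p\eta_1\ni\La\eta_1=\tfrac12\eta_1+y\p_y\eta_1$, which is \emph{not} localized. The paper controls it by pairing $\La\eta_1$, measured in $L^2_y\Lw{1/3B}{2}$ and hence by $\|\eta_1\|_{H^1_y\Lw{1/4B}{5/2}}$ (which is in $L^2_s$ only thanks to the refined weighted bound \eqref{eq:eta2} of Proposition \ref{le:2bis}), against the multiplier measured in a dual weighted norm; this step uses both the $(1+y)^{-3/2}$ decay of $\vbt'$ and the global $\ti\eta_{yy}$ dissipation coming from $\psi_\si'>0$ (see \eqref{eq:di}--\eqref{eq:E1} and \eqref{fL2est}), and it is why \eqref{dFt} carries the term $B\|\ti\eta\|_{L^2_{loc}}^2\|\eta_1\|_{H^1_y\Lw{1/4B}{5/2}}^2$, an $L^1_s$ coefficient handled by Gr\"onwall rather than a term absorbed by the dissipation. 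Your plan, which treats this contribution as localized and absorbable, would not close as written.
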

Combining this and \eqref{poura} yields \eqref{eq:s4tri} as well as the first estimate in \eqref{eq:s4}. Applying \eqref{eq:s4tri} to a solution $\eta$ itself (by setting $\eta_1=\eta=\ti\eta$ and $\eta_2=0$), we obtain 
\EQ{ \label{H1locbd}
 &\|\eta\|_{L^\I_s H^1_y\Lw{\frac{1}{B}}{0}}
 +\|\eta\|_{L^2_s H^1_y\Lw{\frac{1}{B}}{\frac{-3}{4}}}
 \le C(B)\|\eta(0)\|_{H^1_y\Lw{\frac{1}{2B}}{0}}.}
Using these $L^2_s$ bound in the second estimate of \eqref{cl:p1}, we obtain
\EQ{
 \|\p_s\ti a\|_{L^1_s} \lec \|\ti\eta\|_{L^2_sL^2_{loc}}\|\eta\|_{L^2_sL^2_{loc}} \le C(B)\|\ti\eta(0)\|_{H^1_y\Lw{\frac{1}{2B}}{0}} \|\eta(0)\|_{H^1_y\Lw{\frac{1}{2B}}{0}}.}
Integrating this in $s$ yields the second estimate in \eqref{eq:s4}. 

Thus it remains to prove Claim \ref{cl:e}. We rely on the same type of energy functional as in Section 3 of \cite{MMR1} and Proposition 5.1 of \cite{MMR2}, with a slight modification of the weights. 
Let $\hat\fy:\RR\to\RR$ be a smooth function satisfying 
\EQ{
 \begin{cases}\hat\fy \sim e^{-y} &\text{ for }y<0,\\ \hat\fy =1+y &\text{ for }|y|< \frac 12,\\ \hat\fy \sim 1 &\text{ for }y>0, \end{cases} 
 \qquad  \begin{cases} \hat\fy' \sim e^{-y} &\text{ for }y<0\\ \hat\fy' \sim (1+y)^{-\frac 32} &\text{ for }y>0. \end{cases}}
for $B>100$ large enough, we set
\begin{equation}\label{defphit}
 \vbt(s,y) = \hat\fy \left( \frac {\mu_2(s) y}{B}\right),
 \quad \log \mu_2:=\int_0^s \om(\eta_2(s')) ds'.
\end{equation}
For $0<\sigma<\frac 1{10}$ small to be fixed later depending on $B$, consider a smooth function satisfying
\bea
\label{defphi3}
\psi_{\sigma}(y) =\begin{cases} e^{2y} & \mbox{for}   \ \ y<-1,\\
 1 - \frac \sigma {(1+y)^{\frac 32}}  & \mbox{for}\ \ y>-\tfrac 12, \end{cases} \quad \psi_\sigma'(y) > 0 \ \ \forall y\in \RR,
\eea
and we set
\begin{equation}\label{defpsit}
\pbt(y)= \frac{1}{\mu_2^2(s)} \psi_\sigma \left( \frac y {B}\right).
\end{equation}
For simplicity, we denote $\vbt' = \partial_y \vbt = \frac {\mu_2}{B} \hat\fy'(\frac {\mu_2 y}{B})$, and similarly $\ti\psi'=\p_y\ti\psi = \frac{1}{\mu_2^2B}\psi_\si'(\frac {y}{B})$ though $\vbt$ and $\ti\psi$ also depend on $s$ through the function $\mu_2$. 
Let
$$
\ti {\mathcal F} (s)= \int \left(\pbt\left((\ti \eta_y)^2 - 5 Q^4 \ti \eta^2 \right) + \vbt \ti \eta^2 \right)(s,y) dy.
$$
Note that the exact  expressions of $\vbt$ and $\pbt$, in particular, the position and powers of $\mu_2$ in their definitions,  lead to interesting cancellations in \eqref{anotherremark}. These cancellations allow us to work without polynomial weight for $y>0$ in the definition of $\widetilde {\mathcal F}$.
There is another more technical difference with respect to the functions $\varphi,\psi$ defined in \eqref{defphi1bis}, \eqref{defphi2} : since the function $\psi_\sigma$ has a positive derivative on $\RR$, when differentiating $\ti {\mathcal F}$, one gets a global control of  $\ti\eta_{yy}$ in a weighted norm thanks to the local Kato smoothing effect.
This is useful in controlling  some terms in the proof of Claim \ref{cl:e}.  The constant $\sigma>0$ is to be taken small enough in order to preserve at the main order the Virial structure, see \eqref{vir}.

We claim, for some $\theta >0$,
\EQ{
  \frac {d\ti {\mathcal F}} {ds} &+ 
\theta \ip{(\ti \eta_{yy})^2}{\pbt'} + \theta\ip{(\ti \eta_y)^2 + \ti \eta^2}{\vbt'}
 \\& \lec  B\ip{\ti\eta}{Q}^2 + |\ip{\ti\eta}{\vec\La Q}|^2 
  + B\|\ti \eta\|_{L^2_{loc}}^2
\left( \|\eta_1\|_{H^1_y\Lw{\frac 1{4B}}{\frac 52}}^2 + \delta(\alpha_0) (1+s)^{-\frac 72}\right). \label{dFt}}

Assuming \eqref{dFt} for the moment, we finish the proof of \eqref{cl:ee1}.
First, as in Proposition 3.1 of \cite{MMR1}, using $|1-\mu_2(s)|\leq \delta(\alpha_0)$, we note that, for some $\theta_1>0$.
$$ 
  \ti {\mathcal F} \geq
\theta_1  \|\ti\eta\|_{H^1_y\Lw{\frac{1}{B}}{0}}^2   -\frac 1{\theta_1}\ip{\ti\eta}{Q}^2-\frac{1}{\theta_1}|\ip{\ti\eta}{\vec\La Q}|^2 - O(e^{-\frac B2}+\si)\|\ti\eta\|_{L^2_{loc}}^2, 
$$
where the last term is absorbed by the first term on the right, by choosing $B>1$ large enough and $\si>0$ small enough. 
Moreover, since $\mu_2(0)=1$,
$\ti {\mathcal F}(0) \lesssim \|\ti\eta(0)\|_{H^1_y\Lw{\frac{1}{2B}}{0}}^2$. 
Thus, integrating \eqref{dFt} and using \eqref{tietaforc} and \eqref{eq:eta2}, we obtain
\begin{align*}
\|\ti\eta\|_{L^\I_sH^1_y\Lw{\frac{1}{B}}{0}}^2
 + B^{-1}\|\ti\eta\|_{L^2_sH^1_y\Lw{\frac{1}{B}}{\frac{-3}{4}}}^2
 &\lec B|\ti a|_{L^2_s \cap L^\I_s}^2 + \|\ti\eta(0)\|_{H^1_y\Lw{\frac{1}{2B}}{0}}^2
   + B\delta(\al_0)\|\ti \eta\|_{L^\I_sL^2_{loc}}^2 
\end{align*}
The last term is absorbed by the left side, hence we obtain \eqref{cl:ee1}.

\medskip

\noindent{Proof of \eqref{dFt}.} For brevity, we put
\EQ{
 \mat{\Om_2 \\ Z_2}:=\vec\Om(\eta_2), \quad \ti R:=\ti R(\eta_1,\eta_2), \quad \ti\Om:=\ti\Om(\eta_1,\eta_2).}
Using the equation of $\ti \eta$, we have
\begin{align*}
&\frac 12 \frac {d\ti {\mathcal F}} {ds} =
\ip{\ti \eta_s}{-\pbt' \ti \eta_y + \pbt L \ti \eta + (\vbt-\pbt) \ti \eta}  + \frac{\Om_2}2\ip{y\vbt'}{\ti \eta^2} - \Om_2\ip{\ti\psi}{(\ti\eta_y)^2-5Q^4\ti\eta^2}\\
& = \ip{\p_y L\ti\eta}{-\pbt' \ti \eta_y + \pbt L \ti \eta + (\vbt-\pbt) \ti \eta}\\
& + \ip{\ti\Om}{\ti\eta}\cdot\ip{\vec\p(Q+\eta_1)}{-\pbt' \ti \eta_y + \pbt L \ti \eta + (\vbt-\pbt) \ti \eta}\\
& 
+ \Omega_2 \ip{\Lambda \ti \eta}{- (\pbt \ti \eta_y)_y  - 5Q^4 \ti \eta \pbt + \vbt   \ti \eta} + \frac{\Omega_2}{2} \ip{y\vbt'}{\ti \eta^2} - \Om_2\ip{\ti\psi}{(\ti\eta_y)^2-5Q^4\ti\eta^2} \\
& + \ip{Z_2\p_y\ti\eta}{-\pbt' \ti \eta_y + \pbt L \ti \eta + (\vbt-\pbt) \ti \eta} \\
&- \ip{\p_y\ti R\ti\eta}{- (\pbt \ti \eta_y)_y  - 5Q^4 \ti \eta \pbt + \vbt   \ti \eta} \\
& =: f_1 + f_2 + f_3+ f_4+ f_5.
\end{align*}

- Estimate of $f_1$.
We claim that for $\sigma>0$   small enough,   for some small constant $\theta>0$,
\be\label{F1}
 f_1 +\theta \ip{(\ti\eta_{yy})^2}{\pbt'} +\theta \ip{(\ti\eta_y)^2+(\ti \eta)^2}{\vbt'} 
 \lesssim |\ti a|^2 + |\ip{\ti\eta}{\vec\La Q}|^2.
\ee
Indeed, by explicit computations (integrations by parts), we   have
\begin{align*}
 2 f_1
& = -   \int \left( 3 \pbt' (\ti \eta_{yy})^2 + (3 \vbt'+\pbt'-\pbt''') (\ti\eta_y)^2 + (\vbt'-\vbt''') \ti \eta^2\right)\\
&\quad + \int 5 Q^4 \ti\eta^2(\vbt'-\pbt') + \int 20 Q^3 Q' \ti\eta^2 (\pbt-\vbt)\\
&\quad + 10 \int \pbt'\ti \eta_y(4Q'Q^3 \ti\eta + Q^4 \ti\eta_y) 
 + \int \pbt'  ( -2\ti\eta_{yy}+\ti\eta - 5Q^4 \ti\eta)    5Q^4 \ti\eta\\
& = 2 f_1^> + 2 f_1^{\sim },
\end{align*}
where $f_1^>$ corresponds to the region of integration $|y|> \frac B3$ and 
$f_1^{\sim}$ corresponds to  $|y|< \frac B3$.

For $|y|>\frac {B}3$,  $Q(y)$ is small as $B$ is large and thus, for $B$ large enough, we check easily using that $|\pbt'''|\ll \pbt'$, $|\vbt'''|\ll\vbt'$ and $|Q|\ll \min(\pbt',\vbt')$ on $|y|>\frac {B}3$, 
\begin{equation}\label{ygrand}
2 f_1^> \leq - \frac 12 \int_{|y|>\frac {B}3} \ti \psi' (\ti \eta_{yy})^2 + \ti \varphi' \left( (\ti \eta_y)^2 + \ti \eta^2\right).
\end{equation}

For $|y|<\frac {B}3$, note first that  
$$
( \ti \varphi -\ti\psi  )(y) = \frac {\mu_2 y}{B} + 1-\frac{1}{\mu_2^2} + \frac {\sigma}{\mu_2^2(1+\frac y {B})^{\frac 32}}
, \ \ 0<\ti \psi'(y) = \frac {3\sigma}{2\mu_2^2B(1+\frac y {B})^{\frac 52}}< \frac {2\sigma} {B} .$$
Thus,
\begin{align*}
2 f_1^\sim +  & \int_{|y|<\frac {B}3}   \pbt' (\ti \eta_{yy})^2  
  + \frac{\mu_2} {B} \int_{|y|<\frac {B}3} \left( 3(\ti \eta_y)^2 + \ti \eta^2 - 5 Q^4 \ti \eta^2 +20 y Q' Q^3 \ti \eta^2\right) \\
& \lec (B^{-3}+\si+\delta(\al_0)) \int_{|y|<\frac {B}3} \left( (\ti \eta_y)^2 + \ti \eta^2\right)
\end{align*}
Recall  the following localized Virial type estimate
(see \cite[Lemma 3.4]{MMR1}). Note that $\ti\eta$ does not satisfy quite the same orthogonality conditions as in \cite{MMR1} (after rescaling) but \eqref{tietaforc} is sufficient (see also \cite[Proposition 4]{MMjmpa}). 

\medskip

{\sl There exists $B_0>100$ and $\theta>0$ such that for $B>B_0$,}
\begin{align}
&\int_{|y|<\frac {B}3} \left( 3(\ti \eta_y)^2 + \ti \eta^2 - 5 Q^4 \ti \eta^2 +20 y Q' Q^3 \ti \eta^2\right)
\nonumber\\& \geq \theta \int_{|y|\leq \frac {B}3} ((\ti \eta_y)^2 + \ti \eta^2) 
-\frac 1{\theta}\ip{\ti\eta}{Q}^2 -\frac 1{\theta}\ip{\ti\eta}{\vec\La Q}^2 
-\left(\frac 1{\theta B ^2}+\delta(\alpha_0)\right)\|\ti\eta\|_{L^2_{loc}}^2.\label{vir}
\end{align}

Taking $B>1$ large and then $\si>0$ small, related to the universal constant $\theta$, we obtain
\EQ{ 
 & 2 f_1^\sim + \int_{|y|< \frac {B}3} (\ti \eta_{yy})^2 \ti \psi'   + \frac \theta 2 \int_{|y|< \frac {B}3}   \ti \varphi' \left( (\ti \eta_y)^2 + \ti \eta^2\right) 
 \\&\ \lec (B^{-3} +  \delta(\alpha_0))\|\ti \eta\|_{L^2_{loc}}^2 + \ip{\ti \eta}{Q}^2 + |\ip{\ti\eta}{\vec\La Q}|^2
}
Combining this with \eqref{ygrand}, and choosing $B$ large enough and then $\alpha_0$ small, we obtain \eqref{F1}.
For more details, see step 3 of the proof of Proposition 3.1 in \cite{MMR1}.

\medskip

- Estimate of $f_2$. Since $\ip{\Lambda Q}{L\ti \eta} = -2\ip{Q}{\ti \eta}$, we have, after various integrations by parts, and using the definitions of $\pbt$ and $\vbt$,
\EQ{
  &\left| 2\ip{\ti \eta}{Q} + \ip{\Lambda Q}{-\pbt' \ti \eta_y + \pbt L \ti \eta + (\vbt-\pbt) \ti \eta } \right| 
  \\&\lec (B^{-1} + \sigma B^{\frac 12} +\delta(\al_0)) \ip{\ti \eta^2}{\vbt'}^{\frac 12}+B^{-1}|\ip{y\La Q}{\ti\eta}|.}
Similarly, using $LQ'=0$,
\EQ{
 &|\ip{\p_y Q}{-\pbt' \ti \eta_y + \pbt L \ti \eta + (\vbt-\pbt) \ti \eta}| 
 \\& \lec (B^{-1} + \sigma B^{\frac 12}  + \delta(\alpha_0)) \ip{\ti \eta^2}{\vbt'}^{\frac 12} + B^{-1}|\ip{\La Q-Q/2}{\ti\eta}|.}
Using \eqref{cl:p1}, we obtain, choosing $B$ large enough, and then $\sigma$ small enough, 
\begin{align*}
 & |\ip{\ti\Om}{\ti\eta}\cdot\ip{\vec\p Q}{-\pbt' \ti \eta_y + \pbt L \ti \eta + (\vbt-\pbt) \ti \eta}|
 \\&\leq C \|\ti\eta\|_{L^2_{loc}} \left[(B^{-1} + \sigma B^{\frac 12} + \delta(\al_0)) \ip{\ti \eta^2}{\vbt'}^{\frac 12} + |\ti a| +B^{-1}|\ip{\vec\La Q}{\ti\eta}|\right] 
 \\&\leq  \frac {\theta}{100} \ip{\ti \eta^2}{\vbt'} +  CB|\ti a|^2 + |\ip{\vec\La Q}{\ti\eta}|^2. 
\end{align*}

Using \eqref{cl:p1} and Cauchy-Schwarz inequality,
\EQ{  \label{eq:di}
 &\left|\ip{\Lambda \eta_1}{- (\pbt \ti \eta_y)_y + \vbt   \ti \eta - 5Q^4 \ti \eta \pbt} \right| 
 \\ &\lec \|\La\eta_1\|_{L^2_y\Lw{\frac{1}{3B}}{2}} \|- (\pbt \ti \eta_y)_y + \vbt   \ti \eta - 5Q^4 \ti \eta \pbt\|_{L^2_y\Lw{\frac{-1}{3B}}{-2}},}
where the first norm on the right is bounded by $\log B\|\eta_1\|_{H^1_y\Lw{\frac{1}{4B}}{\frac 52}}$, and the other norm is bounded by (using the decay order of $\ti\fy$ and $\ti\psi$) 
\EQ{ \label{fL2est}
 &\|- (\pbt \ti \eta_y)_y + \vbt   \ti \eta - 5Q^4 \ti \eta \pbt\|_{L^2_y\Lw{-\frac{\mu_2}{2B}}{\frac{-5}{4}}} 
 \\& \lec \||\ti\psi\ti\eta_{yy}|+|\ti\psi'\ti\eta_y|+|\ti\fy\ti\eta|\|_{L^2_y\Lw{-\frac{\mu_2}{2B}}{\frac{-5}{4}}}
 \lec \|\ti\eta_{yy}\|_{L^2_y\Lw{\frac 1B}{\frac{-5}{4}}} + \||\ti\eta_y|+|\ti\eta|\|_{L^2_y\Lw{\frac{\mu_2}{2B}}{\frac{-3}{4}}}
 \\& \lec B^{\frac 12}\ip{(\ti\eta_{yy})^2}{\ti\psi'}^{\frac 12} + B^{\frac 12}\ip{(\ti\eta_y)^2+\ti\eta^2}{\ti\fy'}^{\frac 12}.}
The term $\ip{\p_y \eta_1}{- (\pbt \ti \eta_y)_y + \vbt   \ti \eta - 5Q^4 \ti \eta \pbt}$ is estimated similarly (it is actually easier). 
Hence in conclusion, using Lemma \ref{le:2bis}, for $B$ large enough, we obtain
\begin{align}
 \left|\ip{\ti\Om}{\ti\eta}\cdot\ip{\vec\p \eta_1}{- (\pbt \ti \eta_y)_y + \vbt   \ti \eta - 5Q^4 \ti \eta \pbt} \right|
 &\le  \frac {\theta}{100} \ip{(\ti \eta_{yy})^2}{\pbt'} + \frac {\theta}{100}  \ip{ (\ti \eta_{y})^2 + \ti \eta^2}{\vbt'} \nonumber 
\\&\quad + CB^2 \|\ti \eta\|_{L^2_{loc}}^2 \|\eta_1\|_{H^1_y\Lw{\frac 1{4B}}{\frac 52}}^2. \label{eq:E1}
\end{align}

- Estimate of $f_3$. Integrating by parts, we   see that
\begin{align*}
& \ip{\Lambda \ti \eta}{- (\pbt \ti \eta_y)_y + \vbt   \ti \eta - 5Q^4 \ti \eta \pbt }
\\& = - \frac 12 \ip{y \pbt'}{\ti \eta_y^2} + \ip{\pbt}{(\ti\eta_y)^2}
- \frac 12 \ip{\ti\eta^2}{y \vbt'} - 5 \ip{ (2 \pbt'-y \pbt) Q^4}{\ti \eta^2} +  20 \ip{\pbt \Lambda Q Q^3}{\ti \eta^2}. 
\end{align*}
Thus,
\EQ{
& \ip{\Lambda \ti \eta}{- (\pbt \ti \eta_y)_y + \vbt   \ti \eta - 5Q^4 \ti \eta \pbt }
 + \frac 12 \ip{\ti\eta^2}{y \vbt'} - \ip{\pbt}{(\ti\eta_y)^2}
\\& = - \frac 12 \ip{y \pbt'}{(\ti \eta_y)^2} 
 - 5 \ip{(2 \pbt-y \pbt) Q^4}{\ti \eta^2} + 20 \ip{ \pbt \Lambda Q Q^3}{\ti \eta^2}.
 \label{anotherremark}}
Using   $|y\pbt'| \lesssim B \vbt'$ on $\RR$ and then Lemma \ref{le:2bis}, we obtain 
$$
|f_3|
 \lesssim  \left| {\Omega_2} \right| B \ip{(\ti \eta_y)^2 + \ti \eta^2}{ \vbt'}
 \lesssim  B \delta(\alpha_0) (1+s)^{-\frac 72} \ip{(\ti \eta_y)^2 + \ti \eta^2}{ \vbt'}.
$$ 

- Estimate of  $f_4$. Integrating by parts, and using the decay properties of $Q$,
\begin{align*}
& \left| Z_2 \ip{\ti\eta _y}{ -\pbt' \ti \eta_y + \pbt L \ti \eta + (\vbt-\pbt) \ti \eta} \right|   
\\& \leq \delta(\alpha_0)(1+s)^{-\frac 72} B \ip{(\ti \eta_y)^2   + \ti \eta^2}{\ti\varphi'} \leq \frac {\theta}{100}  \ip{(\ti \eta_{y})^2+\ti \eta^2}{\vbt'}  .
\end{align*}

- Estimate of $f_5$. Using \eqref{eq:eta2} and Lemma \ref{le:Sob}, we have pointwise bounds
\EQ{
 & \|\eta_j\|_{L^\I_y\Lw{0}{\frac 94}} \lec \|\eta_j\|_{L^2_y\Lw{0}{\frac 92}}^{\frac 12}\|\p_y\eta_j\|_{L^2_y}^{\frac 12}\le \delta(\al_0),
 \\& \|\ti\eta\|_{L^\I_y\Lw{\frac{\mu_2}{2B}}{\frac{-3}{4}}}
 \lec \|\ti\eta\|_{H^1_y\Lw{\frac{\mu_2}{2B}}{\frac{-3}{4}}}
 \lec B^{\frac 12}\ip{(\ti\eta_y)^2+\ti\eta^2}{\ti\fy'}^{\frac 12}.}
Hence, from \eqref{deftiR}, 
\EQ{ \label{bd tiR}
 \|\p_y(\ti R \ti\eta)\|_{L^2_y\Lw{\frac{\mu_2}{2B}}{6}} 
 &\lec \||Q_x|+Q+|\eta|\|_{L^\I_y\Lw{0}{\frac 94}}^3 \|\eta\|_{H^1_y}\|\ti\eta\|_{H^1_y\Lw{\frac{\mu_2}{2B}}{\frac{-3}{4}}}
 \\& \lec \al_0 B^{\frac 12} \ip{(\ti\eta_y)^2+\ti\eta^2}{\ti\fy'}^{\frac 12}.}
Hence, using \eqref{fL2est}
\EQ{
 |f_5| &\le \|\p_y\ti R\ti\eta\|_{L^2_y\Lw{\frac{\mu_2}{2B}}{6}} \|- (\pbt \ti \eta_y)_y  - 5Q^4 \ti \eta \pbt + \vbt \ti \eta \|_{L^2_y\Lw{\frac{-\mu_2}{2B}}{-6}}
 \\&\le C\al_0 B \left\{ \ip{(\ti\eta_{yy})^2}{\ti\psi'} + \ip{(\ti\eta_y)^2+\ti\eta^2}{\ti\fy'}\right\}
 \\&\le \frac{\theta}{100}\left\{ \ip{(\ti\eta_{yy})^2}{\ti\psi'} + \ip{(\ti\eta_y)^2+\ti\eta^2}{\ti\fy'}\right\},}
taking $\al_0>0$ small enough depending on $B$. 

Gathering these estimates for $f_1$--$f_5$, we obtain \eqref{dFt}.
This finishes the proof of Claim \ref{cl:e} and so that of Proposition \ref{pr:s4}.
\end{proof}


\section{$C^1$ regularity} \label{sec:5}
The following is a more precise version of Theorem \ref{th:manifold} about the regularity of the manifold constructed in the previous section. Put
\EQ{
 H^1_\perp :=\{\fy\in H^1(\RR) \mid \ip{\fy}{Q}=0\}.}
\begin{proposition}\label{pr:4}
There exists a map (the Fr\'echet derivative) $A':\calA_0^\perp\to(H^1_\perp)^*$ with the following properties: For any $\ga_0\in\calA_0^\perp$,
\[
  \forall \ga\in\calA_0^\perp,\quad 
    |A(\ga)-A(\ga_0)-A'(\ga_0)(\ga-\ga_0)| =o\left(\|\ga-\ga_0\|_{H^1}\right).
\]
Moreover, 
for any $\ga_0\in\calA_0^\perp$ and any $\e>0$, there exists $\delta>0$ such that 
\[  
    \ga\in\calA_0^\perp \text{ and }   \|\ga-\ga_0\|_{H^1}<\delta 
  \implies 
 \|A'(\ga)-A'(\ga_0)\|_{(H^1_\perp)^*} <\e,
\]
and there exists an absolute constant $C>0$ such that 
\EQ{
 \forall \ga\in\calA_0^\perp,
 \quad \|A'(\ga)\|_{(H^1_\perp)^*} \le C\|\ga\|_{H^1}.}
\end{proposition}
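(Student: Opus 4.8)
The plan is to obtain the derivative $A'(\ga_0)$ from a rigidity property of the linearization of \eqref{eqetaauto} along the (Soliton) solution $\eta[\ga_0]$ issued from $\eta[\ga_0](0)=A(\ga_0)Q+\ga_0$, and to deduce all regularity claims by a soft weak-compactness argument driven by the quantitative estimates \eqref{eq:s4}--\eqref{eq:s4tri}; no second-order difference estimate is needed.

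\textbf{Linearized flow and rigidity.} For a (Soliton) solution $\eta$, let $(\ell_\eta)$ denote equation \eqref{eqe} taken at $\eta_1=\eta_2=\eta$; since $\ti R(\eta,\eta)=R'(\eta)$ and $\ip{\ti\Om(\eta,\eta)}{\cdot}=D\vec\Om(\eta)$, this is precisely the Fr\'echet linearization of \eqref{eqetaauto}, so the linear constraint \eqref{tietaforc} propagates automatically and $(\ell_\eta)$ falls under Proposition \ref{pr:s4} (taken with $\eta_1=\eta_2=\eta$); a linear analogue of Lemma \ref{le:regu} (weighted $L^2$ identities, Airy smoothing, induction on derivatives, using the coefficient regularity of Proposition \ref{le:2bis}(iii)) places its solutions in the function class required there. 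The key point is: if $\eta'_1,\eta'_2$ both solve $(\ell_\eta)$ in that class, have the same component orthogonal to $Q$ at $s=0$, and both satisfy $\ip{Q}{\eta'_j(s)}\to0$, then $\eta'_1-\eta'_2$ solves $(\ell_\eta)$ with $P_\perp(\eta'_1-\eta'_2)(0)=0$ and $\liminf_s|\ip{Q}{\eta'_1-\eta'_2}|=0$, so \eqref{eq:s4tri} forces $\eta'_1\equiv\eta'_2$. Equivalently: for each $\ga'\in H^1_\perp$ there is \emph{at most one} scalar $a'$ such that the solution $\eta'$ of $(\ell_{\eta[\ga_0]})$ with $\eta'(0)=a'Q+\ga'$ satisfies $\ip{Q}{\eta'(s)}\to0$ as $s\to\I$.

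\textbf{Definition and boundedness of $A'$.} Existence of such $a'$ and the bound come from weak limits. For $\ga'$ in the dense subspace $\calX:=\{\ga'\in H^1\cap L^2(x_+^{10}dx):\ga'\perp Q\}$, put $\ga_n:=\ga_0+\tfrac1n\ga'\in\calA_0^\perp$ for $n$ large, $\delta_n:=\|\ga_n-\ga_0\|_{H^1}$, $\zeta_n:=(\eta[\ga_n]-\eta[\ga_0])/\delta_n$, $b_n:=(A(\ga_n)-A(\ga_0))/\delta_n$. By the Lipschitz bound of Proposition \ref{th:s4}, $b_n$ is bounded; by \eqref{eq:s4tri} applied to $\eta[\ga_n]-\eta[\ga_0]$ (whose $P_\perp$-datum is $\ga'/\|\ga'\|_{H^1}$ and whose weighted norm is $\le\|\cdot\|_{H^1}$), $\zeta_n$ is bounded in $L^\I_sH^1_y\Lw{\frac1B}{0}$. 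Along a subsequence $b_n\to b$ and $\zeta_n$ converges weakly-$*$ to some $\eta'$; since $\eta[\ga_n]\to\eta[\ga_0]$ in $L^\I_sH^1_y\Lw{\frac1B}{0}$ by \eqref{eq:s4tri}, the modulation and nonlinear coefficients of equation \eqref{eqe} for $(\eta[\ga_n],\eta[\ga_0])$ converge strongly to those of $(\ell_{\eta[\ga_0]})$, so $\eta'$ solves $(\ell_{\eta[\ga_0]})$ with $\eta'(0)=bQ+\ga'/\|\ga'\|_{H^1}$; dividing \eqref{eq:s4} by $\delta_n$ and letting $n\to\I$ gives $|\ip{Q}{\eta'(s)}|\lec\delta(\al_0)(1+s)^{-5/2}\to0$. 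By the rigidity above, $b$ is independent of the subsequence, and linearity of $(\ell_{\eta[\ga_0]})$ and of the vanishing condition lets us define $A'(\ga_0)(\ga'):=\|\ga'\|_{H^1}\,\|Q\|_{L^2}^{-2}\ip{Q}{\eta'(0)}$, a linear map on $\calX$. Taking $s=0$ in the second line of \eqref{eq:s4} for the selecting solution, and using $\|\eta[\ga_0](0)\|_{H^1\Lw{\frac1B}{0}}\le\|A(\ga_0)Q+\ga_0\|_{H^1}\lec\|\ga_0\|_{H^1}$, yields $|A'(\ga_0)(\ga')|\lec\|\ga_0\|_{H^1}\|\ga'\|_{H^1}$, so $A'(\ga_0)$ extends to an element of $(H^1_\perp)^*$ with $\|A'(\ga_0)\|_{(H^1_\perp)^*}\lec\|\ga_0\|_{H^1}$; an approximation argument ($\ga'_m\in\calX$, $\ga'_m\to\ga'$ in $H^1$; the selecting solutions are Cauchy in $L^\I_sH^1_y\Lw{\frac1B}{0}$ by \eqref{eq:s4tri}) shows the selection property persists for all $\ga'\in H^1_\perp$ with value $A'(\ga_0)(\ga')$. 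Finally $A'(0)=0$: when $\eta\equiv0$ one has $R'(0)=0$, $\vec\Om(0)=0$, $\ip{Q}{\La Q}=\ip{Q}{Q'}=0$ and $LQ'=0$, so $\tfrac{d}{ds}\ip{Q}{\eta'(s)}=0$ along $(\ell_0)$ and the vanishing condition forces $a'=0$.

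\textbf{Differentiability and continuity.} If differentiability at $\ga_0$ failed, there would be $\ga_n\to\ga_0$ in $\calA_0^\perp$ with $|A(\ga_n)-A(\ga_0)-A'(\ga_0)(\ga_n-\ga_0)|\ge\e_0\delta_n$, $\delta_n:=\|\ga_n-\ga_0\|_{H^1}$. Running the weak-limit procedure above with $\ga'_n:=(\ga_n-\ga_0)/\delta_n$ (unit in $H^1_\perp$) gives, along a subsequence, $b_n\to b$, $\ga'_n\rightharpoonup\ga'$ in $H^1_\perp$, and a weak-$*$ limit $\eta'$ of $\zeta_n:=(\eta[\ga_n]-\eta[\ga_0])/\delta_n$ solving $(\ell_{\eta[\ga_0]})$ with $\eta'(0)=bQ+\ga'$ and $\ip{Q}{\eta'(s)}\to0$; by the selection property $b=A'(\ga_0)(\ga')$, while $A'(\ga_0)(\ga'_n)\to A'(\ga_0)(\ga')$ by weak continuity of the functional $A'(\ga_0)$, so $b_n-A'(\ga_0)(\ga'_n)=(A(\ga_n)-A(\ga_0)-A'(\ga_0)(\ga_n-\ga_0))/\delta_n\to0$, a contradiction; since the weighted norms in Proposition \ref{pr:s4} are dominated by $\|\cdot\|_{H^1}$, this establishes the $o(\|\cdot\|_{H^1})$ estimate. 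Continuity of $A'$ follows by a double-limit version of the same scheme: if $\ga_n\to\ga_0$ in $H^1$ but $\|A'(\ga_n)-A'(\ga_0)\|_{(H^1_\perp)^*}\not\to0$, pick unit $\ga'_n\in H^1_\perp$ with $|A'(\ga_n)(\ga'_n)-A'(\ga_0)(\ga'_n)|\ge\e_0$, extract $\ga'_n\rightharpoonup\ga'$, use $\eta[\ga_n]\to\eta[\ga_0]$ in $L^\I_sH^1_y\Lw{\frac1B}{0}$ (by \eqref{eq:s4tri}) to pass to the limit in the two selecting solutions of $(\ell_{\eta[\ga_n]})$ and $(\ell_{\eta[\ga_0]})$ with direction $\ga'_n$, and conclude by rigidity that both selected scalars converge to $A'(\ga_0)(\ga')$, contradicting $\ge\e_0$.

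\textbf{Main obstacle.} The hard part will be justifying the passage to the limit in these weak-compactness arguments: one must check that every uniform bound supplied by Proposition \ref{pr:s4} lives in ``soft'' $H^1$-type norms only (so that bounded sequences are weakly-$*$ precompact and the bounded functional $A'(\ga_0)$ correctly reads off the weak limit of the directions $\ga'_n$), that the modulation and nonlinear coefficients of \eqref{eqe} converge strongly enough to identify the limit equation as $(\ell_{\eta[\ga_0]})$, and --- for the continuity of $A'$ --- that the decay rate $(1+s)^{-5/2}$ in \eqref{eq:s4} is uniform enough to survive the double limit, so that the asymptotic condition $\ip{Q}{\cdot(s)}\to0$ is preserved and the rigidity step can be invoked for the limiting profile.
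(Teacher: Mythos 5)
Your plan is essentially the paper's proof: extract weak limits of the difference quotients using the uniform bounds of Proposition \ref{pr:s4}, identify the limit as a solution of the linearized equation (which is \eqref{eqe} with $\eta_1=\eta_2=\eta$), and use the same proposition as a rigidity statement (uniqueness of the scalar $a'$ for which $\ip{Q}{\eta'(s)}\to0$) to define $A'(\ga_0)$, prove its boundedness, and run the two contradiction arguments for differentiability and continuity. This is exactly Steps 1--3 of Section \ref{sec:5}.

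One sub-step of your plan is not right as stated, and it is precisely the point the paper devotes Appendix \ref{ap:weaksol} to. You propose to place the weak-limit solution $\eta'$ in the function class required by Proposition \ref{pr:s4} via ``a linear analogue of Lemma \ref{le:regu} (Airy smoothing, induction on derivatives).'' But the gain of regularity in Lemma \ref{le:regu} is bought with the polynomial weight $x_+^{10}$ on the initial data: each derivative costs a power of the weight. The normalized differences $(\ga_n-\ga_0)/\|\ga_n-\ga_0\|_{H^1}$ have no uniform bound in $L^2(x_+^{10}dx)$, so their weak limit $\ga'$ is only in $H^1$, and no smoothing of this type is available for $\eta'$. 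The paper resolves this differently: it proves \emph{uniqueness} of distributional solutions of the linearized equation and \emph{propagation} (not gain) of $H^k$ regularity in exponentially weighted spaces (Lemma \ref{le:weaksol}), then establishes the energy identities of Proposition \ref{pr:s4} for smooth approximating data (where the integrations by parts are legitimate, starting from $s_0>0$ where $\eta$ itself is smooth by \eqref{regeta}) and passes to the limit by density. With that substitution your argument goes through; the rest of your ``main obstacle'' paragraph (strong convergence of the coefficients, uniformity of the $(1+s)^{-5/2}$ decay through the double limit) is handled in the paper exactly as you anticipate.
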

\begin{remark}
The above regularity of $A$ is weaker than $C^1$ in the normed space 
\EQ{
 X_1:= H^1_\perp \cap L^2(x_+^{10}dx), \quad \|f\|_{X_1}:=\|f\|_{H^1},} 
because the domain $\calA_0^\perp$ of the graph $A$ is not open in $X_1$. Indeed, as mentioned in Introduction, the uniform bound on $\|u(0)-Q\|_{L^2(x_+^{10}dx)}$ is crucial for the whole argument. 
However, it is stronger than $C^1$ in the Banach space (which is the statement in Theorem \ref{th:manifold})
\EQ{
 X_2:= H^1_\perp \cap L^2(x_+^{10}dx), \quad \|f\|_{X_2}:= \|f\|_{H^1} + \|f\|_{L^2(x_+^{10}dx)},}
since the $L^2(x_+^{10}dx)$ norm is not used except for the definition of the domain $\calA_0^\perp$. 
Actually, the $H^1$ norm in the estimates on $A'$ can be further weakened with a decaying weight similar to those in Proposition \ref{th:s4}. 
\end{remark}

\begin{remark}
We can also estimate the difference of $A'$ in a Lipschitz way, as well as higher order derivatives of $A$. However, the higher regularity of $A$ requires the stronger decay and regularity of the solution, because of the term $\La\eta$ in the equation (see, e.g., \eqref{eqeta'}). Since the stronger conditions require smaller $\al_0$, there is some limitation in this way of proving the regularity of $A$ for each fixed ball of $\al_0>0$ in $H^1$, even if we restrict $\eta(0)$ to $C_0^\I(\RR)$. 
\end{remark}

\begin{proof}[Proof of Proposition \ref{pr:4}]
\noindent {\bf Step 1.} {Existence of a limit of difference quotient.} 
Fix any $\ga\in\calA_0^\perp$ and let $\ga_n\in\calA_0^\perp$ be a sequence such that $\|\ga_n-\ga\|_{H^1}\to 0$ as $n\to\I$. 
Let $\eta$ and $\eta_n$ be the solutions of \eqref{eqetaauto} corresponding to the initial data $\eta(0)=A(\ga)Q+\ga$ and $\eta_n(0)=A(\ga_n)Q+\ga_n$ so that they are in the (Soliton) regime. 

Take any sequence $h_n>0$ such that  
\EQ{
 & \ti\ga_n:=\frac{\ga_n-\ga}{h_n}, \quad  N:=\sup_{n\in\NN}\|\ti\ga_n\|_{H^1}<+\infty,}
and put
\EQ{ \ti\eta_n:=\frac{\eta_n-\eta}{h_n}, 
 \quad \ti a_n:=\|Q\|_{L^2}^{-2}\ip{Q}{\ti\eta_n}
 =\frac{A(\gamma_n)-A(\gamma)}{h_n}. }
Passing to a subsequence (still denoted by $\gamma_n$), there is a weak limit
\EQ{
 \ti\ga_n \to \exists \ga' \text{ in }\weak{H^1}, \quad \|\ga'\|_{H^1}\le N, \quad \ip{\ga'}{Q}=0.}
$\ti\eta_n$ satisfies 
\begin{align}
 \p_s \ti\eta_n = \p_y[(L - \ti R(\eta_n,\eta)\ti\eta] + \ip{\ti \Omega(\eta_n,\eta)}{\ti\eta_n}\cdot \vec\p(Q+ \eta_n)
  + \vec\Om(\eta)\cdot\vec\p \ti \eta_n, \label{eqenn}
\end{align}
\EQ{ \label{tietaforcnn}
 (\p_s+1)\ip{\ti\eta_n}{\vec\La Q}=0,\quad 
 \liminf_{s\to\I}|\ip{\ti\eta_n}{Q}|=0.}
Hence Proposition \ref{pr:s4} yields
\EQ{ \label{fre}
 & |\ti a_n|  \lec \min({\delta (\alpha_0)} (1+s)^{-\frac 52},\|\ga\|_{H^1}) , 
 \quad \|\ti\eta_n\|_{L^\I_sH^1_y\Lw{\frac 1B}{0}\cap L^2_sH^1_y\Lw{\frac 1B}{\frac{-3}{4}}} \lec N. }

Using the above uniform bound, together with the $H^1\subset L^\I$ bound on $\eta_n$ and $\eta$ for $\ti R(\eta_n,\eta)$ (cf.~\eqref{bd tiR}), it is easy to see that for any $f\in C_0^\I(\RR)$, $\ip{\p_s\ti\eta_n}{f}$ is uniformly bounded in $s\ge 0$ and $n\in\NN$. 
Therefore by Ascoli-Arzela, for any countable set $D\subset C_0^\I(\RR)$, there exists a subsequence (still denoted by $\ga_n$) such that $\ip{\ti\eta_n(s)}{f}$ converges locally uniformly on $s\ge 0$ for every $f\in D$. 
Choosing $D \subset C_0^\I$ dense in $L^2\Lw{\frac{-1}{B}}{0}$, we deduce that $\ti\eta_n$ has a weak limit 
\EQ{
 \ti\eta_n \rightharpoonup \eta'}
in $C([0,\I);\weak{H^1_y\Lw{\frac{1}{B}}{0}})$ and $\weak{L^2_sH^1_y\Lw{\frac{1}{B}}{\frac{-3}{4}}}$. 
From \eqref{fre}, we have strong convergence $\eta_n\to\eta$ in $L^\I_sH^1_y\Lw{\frac{1}{B}}{0}\subset L^\I_sL^\I_y\Lw{\frac{1}{B}}{0}$, and so
\EQ{
 \ti R(\eta_n,\eta) \to R'(\eta)=\ti R(\eta,\eta)
 = \sum_{j=1}^4 \frac{5! Q^{4-j} \eta^j}{j!(4-j)!}} 
in $C([0,\I);L^\I\Lw{\frac{4}{B}}{0})$. 
Using these convergence in \eqref{deftiOm}, we obtain 
\EQ{
 \ti\Om(\eta_n,\eta) \to \vec\Om'(\eta)=\ti\Om(\eta,\eta)
 =  -M(\eta)^{-1}(1-L\p_y+\ti R(\eta,\eta)\p_y)\vec\La Q -  M(\eta)^{-1}M'\vec\Om(\eta)}
in $C([0,\I);e^{\frac{-|y|}{2}}L^\I_y)$. 
Also, $\ti a_n\to a'$ in $C([0,\I);\RR)\cap L^2((0,\I);\RR)$ where
\EQ{
 a':=\|Q\|_2^{-2}\ip{\eta'}{Q},}
which inherits the uniform bound from \eqref{fre}: for all $0\leq s<\infty$,
\EQ{ \label{bd a'}
 |a'(s)|  \lec \min({\delta (\alpha_0)} (1+s)^{-\frac 52},\|\ga\|_{H^1})N.}
Hence $\eta'$ satisfies the limit (linearized) equation in the distribution sense in $y\in\RR$, and the classical sense in $0<s<\I$:  
\EQ{ \label{eqeta'}
 \p_s\eta' &= \p_y[(L - R'(\eta))\eta']  + \ip{\vec\Om'(\eta)}{\eta'}\cdot\vec\p(Q+\eta) 
 + \vec\Om(\eta)\cdot\vec\p\eta' 
  \quad \text{ in }\calD'(\RR)}
with the initial data $\eta'(0)=a'(0)Q+\ga'$. 

\medskip

\noindent {\bf Step 2.} {Uniqueness of the weak limit at a fixed $u$.}  
The proof of $C^1$ follows from the following linear estimates for the above equation of $\eta'$. 
\begin{lemma} \label{le:ddiff}
Let $B>100$ large enough and then $\al_0>0$ small enough. Let $\ga\in\calA_0^\perp$ and  $\eta$ be the solution of \eqref{eqetaauto} for the initial data $\eta(0)=A(\ga)Q+\ga$. 
Let $\eta'\in C([0,\I);\weak{H^1_y\Lw{\frac{1}{B}}{0}})\cap L^2_sH^1_y\Lw{\frac{1}{B}}{\frac{-3}{4}}$ be a solution of \eqref{eqeta'}. Then we have  
\EQ{ \label{esteta'}
 & |\ip{Q}{\eta'}| \le C(B)\min(\|\ga\|_{H^1},\delta(\al_0)(1+s)^{-\frac 52})\|P_\perp\eta'(0)\|_{H^1\Lw{\frac 2B}{0}},
 \\& \|\eta'\|_{L^\I_sH^1_y\Lw{\frac 4B}{0} \cap L^2_s H^1_y\Lw{\frac 4B}{\frac{-3}{4}}} \le C(B)\|P_\perp\eta'(0)\|_{H^1_y\Lw{\frac 2B}{0}}.}
\end{lemma}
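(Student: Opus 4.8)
The plan is to read Lemma~\ref{le:ddiff} off Proposition~\ref{pr:s4} applied with $\eta_1=\eta_2=\eta$ and $\ti\eta=\eta'$. This is legitimate because, by the very definitions \eqref{deftiR}--\eqref{deftiOm}, one has $\ti R(\eta,\eta)=R'(\eta)$ and $\ti\Om(\eta,\eta)=\vec\Om'(\eta)$, so that \eqref{eqeta'} is \emph{literally} equation \eqref{eqe} for this choice of $(\eta_1,\eta_2)$; and the freedom built into Proposition~\ref{pr:s4} of letting $\ti\eta$ be an arbitrary solution of \eqref{eqe} (not necessarily $\eta_1-\eta_2$) is exactly what is exploited here. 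Three points then remain: to verify the structural hypotheses \eqref{tietaforc}--\eqref{tiadec} for $\eta'$, to account for the weaker regularity of $\eta'$, and to match the exponential weights.

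For the hypotheses: the orthogonality $(\p_s+1)\ip{\eta'}{\vec\La Q}=0$ is automatic, being the linearization of the identity satisfied by every solution of \eqref{eqetaauto}; equivalently, pairing \eqref{eqeta'} with $\vec\La Q$ and inserting the defining formula for $\vec\Om'(\eta)$ (which mirrors \eqref{eqOmZ}) reproduces \eqref{tietaforc}, exactly as \eqref{eqforc} is recovered from \eqref{eqetaauto}. The asymptotic condition \eqref{tiadec} is forced by the assumed function space: for $B>100$ the weight $e^{2y/B}$ (resp. $(1+y)^{-3/2}$) dominates $e^{-|y|/10}$, so $\eta'\in L^2_sH^1_y\Lw{\frac1B}{\frac{-3}{4}}$ embeds into $L^2_sL^2_{loc}$, whence $\int_0^\I\|\eta'(s)\|_{L^2_{loc}}^2\,ds<\I$ and therefore $\liminf_{s\to\I}\|\eta'(s)\|_{L^2_{loc}}=0$; since $|\ip{\eta'}{Q}|\lec\|\eta'\|_{L^2_{loc}}$ this gives \eqref{tiadec}.

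The regularity point is where the real work lies. Proposition~\ref{pr:s4} is stated for $\ti\eta$ enjoying the smoothing of Lemma~\ref{le:2bis}(iii), whereas $\eta'$ is only weakly regular, and the scaling term $\La\eta'$ in \eqref{eqeta'} blocks recovering that smoothing from the equation. The remedy is a standard mollification: one carries out the weighted-energy computation behind Claim~\ref{cl:e} on $\eta'*\rho_\e$, which solves \eqref{eqeta'} up to commutator errors (arising from the commutators of the mollifier with $y\p_y$, with multiplication by $5Q^4$, and with $R'(\eta)\p_y$); these are $O(\e)$ in the weighted norms appearing on the right of \eqref{dFt}, using only $\eta'\in L^\I_sH^1_y\Lw{\frac1B}{0}\cap L^2_sH^1_y\Lw{\frac1B}{\frac{-3}{4}}$, and are absorbed into the dissipative terms before letting $\e\to0$. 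Alternatively, in the setting of Step~1 of the proof of Proposition~\ref{pr:4}, where $\eta'$ is the weak limit of the smooth difference quotients $\ti\eta_n=(\eta_n-\eta)/h_n$, one may apply Proposition~\ref{pr:s4} to each $\ti\eta_n$ directly and pass to the limit, using weak lower semicontinuity of the $L^\I_s$ and $L^2_s$ norms together with the strong convergence $P_\perp\ti\eta_n(0)=\ti\ga_n\to\ga'$. Although routine, this argument must be set up so that every commutator error is genuinely dominated by the dissipative terms, uniformly in $\e$, before the limit is taken; I expect this to be the main obstacle.

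It then suffices to invoke Proposition~\ref{pr:s4} with its parameter replaced by $B/4$, so that its conclusion weights $\Lw{\frac4B}{0},\Lw{\frac4B}{\frac{-3}{4}}$ and its initial-data weight $\Lw{\frac2B}{0}$ coincide with those of \eqref{esteta'}. Estimate \eqref{eq:s4tri} then yields the second line of \eqref{esteta'}. Estimate \eqref{eq:s4}, with $\eta_1=\eta_2=\eta$, yields both $|\ip{Q}{\eta'}|\le C(B)\delta(\al_0)(1+s)^{-\frac52}\|P_\perp\eta'(0)\|_{H^1\Lw{\frac4B}{0}}$ and $|\ip{Q}{\eta'}|\le C(B)\|\eta(0)\|_{H^1}\|P_\perp\eta'(0)\|_{H^1\Lw{\frac4B}{0}}$; since $\|\eta(0)\|_{H^1}=\|A(\ga)Q+\ga\|_{H^1}\lec\|\ga\|_{H^1}$ by the bound $|A(\ga)|\lec\|\ga\|_{H^1}^2$ (Proposition~\ref{th:s4}) and $\|\cdot\|_{H^1\Lw{\frac4B}{0}}\le\|\cdot\|_{H^1\Lw{\frac2B}{0}}$, taking the minimum gives the first line of \eqref{esteta'}.
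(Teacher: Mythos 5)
Your core reduction is exactly the paper's: apply Proposition \ref{pr:s4} with $\eta_1=\eta_2=\eta$ and $\ti\eta=\eta'$, check \eqref{tietaforc} by pairing the equation with $\vec\La Q$, deduce \eqref{tiadec} from $\eta'\in L^2_sH^1_y\Lw{\frac1B}{\frac{-3}{4}}\subset L^2_sL^2_{loc}$, and adjust the parameter $B$ to match the weights; the derivation of the first line of \eqref{esteta'} from the two cases of \eqref{eq:s4} together with $|A(\ga)|\lec\|\ga\|_{H^1}^2$ is also correct. The one place where you diverge is the point you yourself flag as the main obstacle: justifying the energy computation for a solution that is only a distribution solution in the stated weak class. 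The paper does \emph{not} mollify the solution inside the energy argument. Instead it proves (Appendix \ref{ap:weaksol}, Lemma \ref{le:weaksol}) that distribution solutions of the linearized equation are unique and that $H^k$ regularity of the data at any $s_0>0$ propagates (using the smoothness of $\eta$ for $s\ge s_0$ from \eqref{regeta}); one then proves \eqref{esteta'} for smooth data, where all integrations by parts are classical, and extends to general data by density and $s_0\to0$. This sidesteps entirely the commutators of the mollifier with $y\p_y$, $5Q^4$ and $R'(\eta)\p_y$ that your first remedy would have to absorb uniformly into the dissipation.

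Two cautions on your proposed remedies. Your second alternative --- applying Proposition \ref{pr:s4} to the smooth difference quotients $\ti\eta_n$ and passing to the weak limit --- only yields the estimates for the \emph{particular} weak solution arising as such a limit; but Lemma \ref{le:ddiff} must hold for an \emph{arbitrary} solution of \eqref{eqeta'} in the stated class, since in Step 2 it is invoked precisely to prove that the limit is unique (equivalently, that two weak solutions with the same $P_\perp$-data coincide). So that route is circular for the intended application unless you separately prove uniqueness of weak solutions, which is exactly the content of Lemma \ref{le:weaksol}(1). Your first alternative (Friedrichs mollification with commutator control) can be made to work, but as written it is an acknowledged placeholder rather than a proof; to match the paper you should either carry out those commutator estimates or import the Appendix's uniqueness-plus-regularity-propagation argument.
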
 
\begin{proof}
As is already indicated above, \eqref{eqeta'} for $\eta'$ is the same as \eqref{eqe}, once we put $\eta_1=\eta_2=\eta$ and $\ti\eta=\eta'$. 
It also satisfies \eqref{tiadec} because $\eta'\in L^2_sH^1_y\Lw{\frac{1}{B}}{\frac{-3}{4}}$. 
Moreover, one can easily see that \eqref{eqeta'} implies 
\EQ{
 (\p_s+1)\ip{\vec\La Q}{\eta'}=0.}
Hence applying Proposition \ref{pr:s4} to $\eta'$ yields the conclusion. 

The only possible issue is that $\eta'$ solves the equation \eqref{eqeta'} only in the distribution sense. 
However, since the equation is linear and we have enough estimates on $R'(\eta)$, it is easy to see that the distribution solution of \eqref{eqeta'} is unique. Hence it suffices to prove the above estimates only for smooth initial data $\eta'(0)$, then we can use the integration by parts as in the proof of Proposition \ref{pr:s4}. See Appendix \ref{ap:weaksol} for more detail about the treatment of distribution solutions. 
\end{proof}

The above lemma says in particular that given $\gamma$, $\gamma'$ as above, there is at most one value of $a_0'$ such that the (unique) solution $\eta'$ of \eqref{esteta'} satisfies $\liminf_{t\to \infty} \ip{\eta'}{Q}=0$.
This rigidity  implies that the weak limit of $\ti\eta_n$ is uniquely determined by $\ga$ and $\ga'$. In particular, 
\EQ{
 A'(\ga)\ga':=a'(0)=\|Q\|_2^{-2}\ip{\eta'(0)}{Q}\in\RR }
 does not depend on the choice of a particular sequence $(\gamma_n)$ and 
is well-defined for any $\ga\in\calA_0^\perp$ and any $\ga'\in H^1_\perp$. The map $A'(\ga)$ is linear for $\ga'$ because the equation \eqref{eqeta'} is linear for $\eta'$. Moreover, it is bounded by \eqref{esteta'}
\EQ{
 |A'(\ga)\ga'|=|a'(0)| \lec \|\ga\|_{H^1}\|\ga'\|_{H^1}
 \implies \|A'(\ga)\|_{(H^1_\perp)^*} \lec \|\ga\|_{H^1}.}
As a consequence, for any sequence $\calA_0^\perp\ni\ga_n\to\ga$ strongly in $H^1$ such that $\ti\ga_n=(\ga_n-\ga)/\|\ga_n-\ga\|_{H^1}\to\ga'$ weakly in $H^1$, we have,  for a subsequence
\EQ{ \label{Frechet}
 A'(\ga)\ti\ga_n \to A'(\ga)\ga'=a'(0).}
By standard arguments, this implies the differentiability of $A$, namely
\EQ{
 A(\ga_n)=A(\ga)+A'(\ga)(\ga_n-\ga)+o(\|\ga_n-\ga\|_{H^1}).}

\medskip

\noindent {\bf Step 3.} {Continuity of the derivative.} 
Let $\gamma_0\in\calA_0^\perp$ and let $\eta_0$ be the solution of \eqref{eqetaauto} with $\eta_0(0)=A(\ga_0)Q+\ga_0$. 
To show the continuity of $A'$ in $(H^1_\perp)^*$ at $\gamma_0$, take any sequence $\ga_n\in\calA_0^\perp$ converging to $\ga_0$ strongly in $H^1$, and any sequence $\ga_n'\in H^1_\perp$ satisfying $\|\ga_n'\|_{H^1}\le 1$ and converging to $\ga_0'$ weakly in $H^1$. 
Let $\eta_n$ be the solution of \eqref{eqetaauto} with the initial data $\eta_n(0)=A(\ga_n)Q+\ga_n$. 
Let $\eta'_n$ be the solution of \eqref{eqeta'} with $\eta=\eta_n$ and the initial data $\eta'_n(0)=A'(\ga_n)\ga_n'Q+\ga_n'$. 
Applying again Proposition \ref{pr:s4}, \eqref{eq:s4tri} implies 
\EQ{
 \|\eta_n-\eta_0\|_{L^\I_sH^1_y\Lw{\frac 4B}{0}\cap L^2_sH^1_y\Lw{\frac 4B}{\frac{-3}{4}}} \lec \|\ga_n-\ga_0\|_{H^1} \to 0, }
and \eqref{esteta'} implies 
\EQ{
 \|\eta'_n\|_{L^\I_sH^1_y\Lw{\frac 4B}{0}\cap L^2_sH^1_y\Lw{\frac 4B}{\frac{-3}{4}}} \lec \|\ga_n'\|_{H^1} \le 1.}
Then the same argument as in Step 1 yields a weak limit $\eta'_\I$ after extracting a subsequence: 
\EQ{
 \eta'_n \rightharpoonup \eta'_\I}
in $C([0,\I);\weak{H^1_y\Lw{\frac 4B}{0}})\cap\weak{L^2_sH^1_y\Lw{\frac 4B}{\frac{-3}{4}}}$. 
Moreover, $\eta'_\I$ is a weak solution of \eqref{eqeta'} with $\eta=\eta_0$ and the initial data $\eta'_\I(0)=a'_\I Q+\ga_0'$, where 
\EQ{
 a'_\I:=\lim_{n\to\I}A'(\ga_n)\ga'_n.}
Then, Lemma \ref{le:ddiff} implies 
\EQ{
 a'_\I = A'(\ga_0)\ga'_0,}
and so $\lim_{n\to \I}\|A'(\ga_n)-A'(\ga_0)\|_{(H^1_\perp)^*}= 0$ and the continuity of $A'$ at $\ga_0$ is proved.
\end{proof}

\appendix
\section{Weak solutions of the linearized gKdV equation} \label{ap:weaksol}
Here we prove uniqueness and regularity of weak or distributional solutions of \eqref{eqeta'}. 
Using the Sobolev bounds in Lemma \ref{le:2bis} for $\eta$ together with the weighted Sobolev inequality (Lemma \ref{le:Sob}) as before, it is easy to see that the distribution solution obtained in Section \ref{sec:5} is in the setting of the following lemma. Let $\calL(X,Y)$ denote the Banach space of bounded linear operators from $X$ to $Y$. 
\begin{lemma} \label{le:weaksol}
Let $T,\si,\nu>0$, $D>1$ and $\NN\ni k\ge 1$. Let $(\Om,Z) \in C([0,T];\RR^2)$,  $B\in C([0,T];\calL(L^2\Lw{D^2(\si+k\nu)}{0},H^k\Lw{\si}{0}))$ and $m\in C([0,T];L^\I\Lw{0}{\frac{1}{2}+\e}\cap H^k\Lw{\nu}{\frac{1}{2}+\e})$ for some $\e>0$. 
Suppose that $|\int_0^s\Om(s')ds'|\le\log D$ for all $0<s<T$ and that $\xi \in C([0,T];\weak{L^2_y\Lw{\si}{0}})$ solves the following equation for $0<s<T$. 
\EQ{
 \p_s \xi = -\p_y(\p_y^2+m)\xi + (\Om\La+Z\p_y)\xi + B\xi  \quad \text{in }\calD'(\RR).}
\begin{enumerate}
\item If $\xi(0)=0$ then $\xi(s)=0$ for all $0\le s\le T$. 
\item If $\xi(0)\in H^k_y\Lw{\si}{0}$ then $\xi\in C([0,T];H^k_y\Lw{D^2(\si+k\nu)}{0})$. 
\end{enumerate}
\end{lemma}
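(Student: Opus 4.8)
The plan is: first establish the weighted energy a priori estimate that underlies both parts, for smooth rapidly decaying solutions; then deduce part~(1) (uniqueness) by a mollification argument; and finally deduce part~(2) (persistence of regularity) by approximation, using part~(1) to identify the approximant with the given weak solution.

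For the a priori estimate (and hence part~(2)), the first step is to remove the non-dispersive terms $\Om\La+Z\p_y$ by the change of variables relating \eqref{eqeta} and \eqref{kdv}, now at the linear level: writing $\xi(s,y)=\mu(s)^{-1/2}\ze(\tau(s),(y-z(s))/\mu(s))$ with $\tau=\int_0^s\mu^{-3}$ and $\mu,z$ chosen so that $\log\mu=\int_0^s\Om$, the function $\ze$ solves a pure Airy equation with a potential, $\p_\tau\ze=-\p_x(\p_x^2\ze+\ti m\ze)+\ti B\ze$, in which the transport and scaling terms have disappeared. Since $|\int_0^s\Om|\le\log D$, the rescaling keeps $\mu\in[D^{-1},D]$, so it turns an exponential weight $e^{\si y}$ into $e^{\si'x}$ with $\si'\le D\si$; together with the rescaling of the $H^k$ norm this accounts for the factor $D^2$ in the final exponent. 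The second step is a weighted $H^k$ estimate for $\ze$: for $j=0,\dots,k$, differentiate the equation $j$ times, pair $\p_x^j\ze$ with $\p_x^j\ze\cdot w(\si',0,\cdot)^2$ and integrate by parts. The Airy term yields the favorable term $-3\langle(\p_x^{j+1}\ze)^2,(w^2)'\rangle$; the contributions in which a derivative falls on $\ti m$ are closed by Cauchy--Schwarz, each costing one factor of the decay rate $\nu$ in the weight exponent (using $m\in H^k\Lw{\nu}{\frac12+\e}$), which produces the summand $k\nu$; the $\ti B\ze$ term is absorbed using the mapping hypothesis on $B$. A Gronwall argument and undoing the change of variables give $\xi\in C([0,T];H^k\Lw{D^2(\si+k\nu)}{0})$ for smooth rapidly decaying data; density of such data in $H^k\Lw{\si}{0}$, linearity and the a priori bound extend this to general data, and part~(1) identifies this solution with the given $\xi$.

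For part~(1), I apply the $k=0$ case of the above computation to the difference of two solutions, but since a weak solution is only $\weak L^2\Lw{\si}{0}$-valued I first regularize: $\xi_h:=\rho_h*\xi$ is smooth in $y$ with all $y$-derivatives in $L^2\Lw{\si'}{0}$ for $\si'$ close to $\si$, which makes the weighted integrations by parts licit, and it solves the mollified equation; letting $h\to0$ then justifies the weighted energy identity for $\xi$ itself, the mollification errors --- in particular the commutators $[\rho_h*,\Om\La]\xi$ and $[\rho_h*,m\p_y]\xi$ --- converging to the terms produced by the formal integration by parts, thanks to $m\in H^k$ with $k\ge1$. With the rescaled weight as above this yields $\tfrac{d}{ds}\|\xi(s)\|_{w^2}^2\le C(s)\|\xi(s)\|_{w^2}^2$, so $\xi(0)=0$ forces $\xi\equiv0$ by Gronwall. (Alternatively one can argue by duality, solving the adjoint equation --- which has the same structure, since $\La^*=-\La$ --- backward from $C_0^\I$ data by the above estimates, and using that $\tfrac{d}{ds}\langle\xi(s),\psi(s)\rangle$ vanishes identically.)

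The main obstacle is the unbounded coefficient $y$ in $\La=\tfrac12+y\p_y$: a naive weighted $L^2$ estimate at a fixed exponent produces the term $-\tfrac{\Om}2\langle\xi^2,y\,(w^2)'\rangle$, whose weight $|y(w^2)'|$ is not dominated by $w^2$, so the Gronwall loop does not close; this is precisely why the transport term must be removed by the bounded rescaling (equivalently, why the weight exponent must be allowed to grow), and why the same point has to be handled carefully in the mollification of part~(1), where the commutator of $\rho_h*$ with $y\p_y$ is the delicate term.
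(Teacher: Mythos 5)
You have the right obstacle and the right remedy: the naive weighted estimate fails because $y(w^2)'$ is not controlled by $w^2$, and the cure is the time-dependent rescaling $y\mapsto (y-z(s))/\mu(s)$ with $\log\mu=\pm\int_0^s\Omega$, $|\log\mu|\le\log D$, which removes $\Omega\Lambda+Z\p_y$ at the cost of a bounded factor in the weight exponent. The plan --- rescale, mollify, run the weighted $H^j$ identities with the $\nu$-shift per derivative falling on $m$, close by Gronwall --- is the paper's plan, so I consider the substance essentially the same. Two organizational remarks. First, the paper mollifies \emph{after} the change of variables: $v_n=\delta_n*v$ solves a pure Airy equation with potential, so the only errors are $\delta_n*(\hat m v)-\hat m v_n$ and $\delta_n*(\hat B v)-\hat B v_n$, which vanish by dominated convergence; your proposal keeps $\Lambda$ in the mollified equation and handles it with a time-dependent weight, so the commutator $[\rho_h*,y\p_y]\xi$ genuinely appears. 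That commutator does tend to $0$ (it equals $-(z\rho_h')*\xi-\rho_h*\xi\to\xi-\xi=0$ strongly in $L^2$, and the weight is bounded for $k=0$), so your route works, but you should record that computation rather than defer it; as written it reads as an acknowledged gap, while the paper's ordering sidesteps it entirely. Second, for part (2) you propose solving from smooth dense data and identifying the limit with $\xi$ via part (1); this is fine but quietly invokes an existence theory for smooth data which you do not supply. The paper avoids it by applying the mollified energy identity directly to $v_n-v_m$, obtaining that $v_n^{(j)}$ is Cauchy in the weighted norms, which gives strong continuity of $\xi$ in $H^k\Lw{D^2(\si+k\nu)}{0}$ in one stroke without any abstract well-posedness input.
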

Using this lemma with $k=1$, we obtain the uniqueness of the weak solution $\eta'$ of \eqref{eqeta'} considered in Section \ref{sec:5}. 
In order to justify the integration by parts needed in the proof of Lemma \ref{le:ddiff}, we start from arbitrary small $s=s_0>0$ and consider the case $\eta'(s_0)\in H^\I$. Since $\eta$ is regular enough for $s\ge s_0$ by \eqref{regeta}, we can use the above lemma for $k\le 9$, then $\eta'$ solves the equation in the classical sense, so that we can integrate by parts. By the density argument, the uniform estimates are extended to the general case $\eta'(s_0)\in H^1_y\Lw{\frac 1B}{0}$. 
For $\al_0$ small enough (depending on $B$), we can take $D>1$ and $\nu>0$ small enough such that $D^2(1/B+\nu)<2/B$. Then $\eta'(s)$ is strongly continuous in $H^1_y\Lw{\frac 2B}{0}$, so that we can take the limit $s_0\to+0$, concluding the estimates by the initial data in Lemma \ref{le:ddiff}. 

\begin{proof}[Proof of Lemma \ref{le:weaksol}] 
First we use the change of variables $(s,y)\mapsto(t,x)$ as in \eqref{tx2sy}, with $\mu(s):=\int_0^s\Om(s')ds'$ and $z(s):=\int_0^s\mu(s')(Z(s')+1)ds'$. Put $v(t,x)=\xi(s,y)$. Then $v$ solves 
\EQ{
 \p_t v = -\p_x(\p_x^2+\hat{m})v+\hat{B}v \quad\text{ in }\calD'(\RR)}
for $0<t<T':=t(T)$, where 
\EQ{
 \hat{m}(t,x):=\mu(s)^2 m(s,y), \quad \hat{B}(t):=\mu(s)^3 \calT(s)B(s)\calT(s)^{-1},}
and $\calT(s)$ is the operator of the transform $(\calT(s)\fy)(y)=\fy(x)$. By the assumption, we have $1/D\le \mu\le D$ for all $0\le s\le T$, and so, $v \in C([0,T'];\weak{L^2_y\Lw{D\si}{0}})$, $\hat{m}\in C([0,T'];L^\I\Lw{0}{\frac{1}{2}+\e}\cap H^k\Lw{D\nu}{\frac{1}{2}+\e})$ and $\hat{B}\in C([0,T'];\calL(L^2\Lw{D(\si+k\nu)}{0}),H^k\Lw{D\si}{0}))$. 

Second we use the standard mollifier argument. Choose any $\de\in C_0^\I(\RR)$ satisfying $\supp\de\subset(-1/2,1/2)$ and $\ip{\de}{1}=1$. Let $\de_n:=n\de(nx)$ and $v_n:=\de_n*v$. Then for all $l\ge 0$, $v_n\in C^1([0,T'];H^l\Lw{D\si}{0})$  solves in the classical sense
\EQ{ \label{eqvn}
 \p_t v_n = -\p_x^3 v_n  + \de_n*(\hat{B}v) - \p_x\de_n*(\hat{m}v),}
which implies the $C^1$ regularity in $t$ of $v_n$. 

Finally, we use the weighted $L^2$ estimate as before. Fix $\e>0$ so small that we can use the bound on $\hat{B}$, and choose $w\in C^\I(\RR)$ such that 
\EQ{
 & w^{(l)}(x) \sim e^{2D\si x} \quad \text{ for $x<0$ and $l=0,1,2,3$},
 \\& w'(x)\sim (1+x)^{-1-2\e}, \quad |w^{(l)}|(x)\lec 1 \quad\text{ for $x>0$ and $l=0,1,2,3$}.}
From \eqref{eqvn} we have a weighted $L^2$ identity 
\EQ{
 \p_t\ip{v_n^2}{w} = -3\ip{(v'_n)^2}{w'} + \ip{v_n^2}{w'''}
  + 2\ip{\de_n*(\hat{B}v)}{v_n w} + 2\ip{\de_n*(\hat{m}v)}{(v_nw)'}.}
Using the bounds on $\hat{B}$, $\hat{m}$, $w$ and $w'$, we have 
\EQ{
 & \|\de_n*(\hat{B}v)\|_{L^2_x\Lw{D\si}{0}}
  \lec \|\hat{B}v\|_{L^2_x\Lw{D\si}{0}} \lec \|v\|_{L^2_x\Lw{D\si}{0}},
 \\& \|\de_n*(\hat{m}v)\|_{L^2_x\Lw{D\si}{\frac{1}{2}+\e}}
  \lec \|\hat{m}\|_{L^\I_x\Lw{0}{\frac{1}{2}+\e}}\|v\|_{L^2_x\Lw{D\si}{0}},
 \\& \|(v_nw)'\|_{L^2_x\Lw{-D\si}{\frac{-1}{2}-\e}}
 \lec \|v_n'\|_{L^2_x\Lw{D\si}{\frac{-1}{2}-\e}}+\|v_n\|_{L^2_x\Lw{D\si}{0}}. }
Thus using Cauchy-Schwarz and the bounds on $w'$ and $w'''$, we obtain 
\EQ{
 \p_t\ip{v_n^2}{w} \le -2\ip{(v'_n)^2}{w'} + C \ip{v_n^2}{w} + C \ip{(v-v_n)^2}{w}.}
Hence integrating in $0<t<T'$, 
\EQ{ 
 \ip{v_n^2(t)}{w} \le e^{Ct}\ip{v_n^2(0)}{w} + \int_0^t e^{C(t-t')}C \ip{(v-v_n)^2(t')}{w}dt'.}
As $n\to\I$, the last term is vanishing by the dominated convergence in $t'$. Thus we obtain 
\EQ{
 \ip{v^2(t)}{w} \le e^{Ct}\ip{v^2(0)}{w}.}
In particular, if $\xi(0)=0$ then $v(0)=0$ and so $v(t)=0$ for all $0\le t\le T'$. We can apply the same argument to the difference $v_n-v_m$, which yields
\EQ{
 \ip{(v_n-v_m)^2(t)}{w} \le e^{Ct}\ip{(v_n-v_m)^2(0)}{w}
 + \int_0^t Ce^{C(t-t')}F_{n,m}^2(t')dt',}
where $F_{n,m}:=\|v_n-v_m\|_{L^2\Lw{D\si}{0}}+\|(\de_n-\de_m)*(\hat{B}v)\|_{L^2\Lw{D\si}{0}}
 +\|(\de_n-\de_m)*(\hat{m}v)\|_{L^2\Lw{D\si}{\frac 12+\e}}$. 
As $n,m\to\I$, the right hand side is vanishing by the dominated convergence, uniformly for $0\le t\le T'$. 
Hence the limit $v$ is also strongly continuous in $L^2_x\Lw{D\si}{0}$. 
The same argument can be applied to the derivatives for $1\le j\le k$ 
\EQ{ \label{eqvnj}
 \p_t v^{(j)}_n = -\p_x^3 v^{(j)}_n + \de_n*\p_x^j(\hat{B}v) - \p_x\de_n*\p_x^j(\hat{m}v).}
The only difference from the case $j=0$ appears in the last term, for which we have 
\EQ{ \label{deri mv}
 &\|\p_x^j(\hat{m}v)\|_{L^2_x\Lw{D(\si+j\nu)}{\frac{1}{2}+\e}} 
 \\&\ \lec \|\hat{m}\|_{L^\I_x\Lw{0}{\frac12+\e}}\|v^{(j)}\|_{L^2_x\Lw{D(\si+j\nu)}{0}}+ \sum_{l=0}^{j-1}\|\hat{m}^{(j-l)}\|_{H^1_x\Lw{D\nu}{\frac{1}{2}+\e}}\|v^{(l)}\|_{L^2_x\Lw{D(\si+l\nu)}{0}}, }
where we used the weighted $L^\I$ Sobolev, see Lemma \ref{le:Sob}. 
Hence inductively for each $j$, after modifying the weight function $w$ such that 
\EQ{
  w^{(l)}(x) \sim e^{2D(\si+j\nu)x} \quad \text{for $x<0$ and $l=0,1,2,3$,}}
we obtain by the same argument as for $j=0$, using the induction hypothesis for the last term of \eqref{deri mv}, 
\EQ{
 \|v^{(j)}(t)\|_{L^2_x\Lw{D(\si+j\nu)}{0}}
  \lec e^{Ct}\|v^{(j)}(0)\|_{L^2_x\Lw{D(\si+j\nu)}{0}} \lec e^{Ct} \|\xi^{(j)}(0)\|_{L^2_y\Lw{\si+j\nu}{0}}.}
Similarly, we obtain uniform convergence of $v^{(j)}_n$ as $n\to\I$ in $L^2_x\Lw{D(\si+j\nu)}{0}$. Changing back the variables $(s,y)\mapsto(t,x)$, we obtain $\xi\in C([0,T];H^k\Lw{D^2(\si+k\nu)}{0})$. 
\end{proof}

\end{document}